\numberwithin{equation}{section}
\newcommand{\NN}{\mathbb{N}}
\newcommand{\RR}{\mathbb{R}}
\newcommand{\EE}{\varepsilon}
\newcommand{\Div}{\textnormal{div}}
\newcommand{\curl}{\textnormal{curl }}
\newcommand{\supp}{\textnormal{supp }}
\newcommand{\Lip}{\textnormal{Lip}}
\newcommand{\Exp}{\textnormal{exp}}
\newcommand{\Log}{\textnormal{log}}
\newcommand{\loc}{\textnormal{loc}}
\newtheorem{Theo}{Theorem}[section]
\theoremstyle{plain}
\theoremstyle{definition}
\newtheorem{defi}[Theo]{Definition}
\theoremstyle{remark}
\newtheorem*{rema*}{Remark}
\newtheorem*{remas}{Remarks}
\date{}
\theoremstyle{definition}
\newtheorem{Definition}{Definition}[section]
\theoremstyle{Proposition}
\newtheorem{Proposition}{Proposition}[section]
\theoremstyle{Theorem}
\newtheorem{Theorem}{Theorem}[section]
\theoremstyle{Lemma}
\newtheorem{Lemma}{Lemma}[section]
\theoremstyle{Corollary}
\newtheorem{Corollary}{Corollary}[section]
\newtheorem{Remark}{Remark}[section]
\begin{document}

\author[O. Melkemi]{Oussama Melkemi}
\address{LEDPA, Universit\'e de Batna --2--\\ Facult\'e des Math\'ematiques et Informatique\\ D\'epartement de Math\'ematiques\\ 05000 Batna Alg\'erie}
\email{ou.melkemi@univ-batna2.dz}

\author[M. Zerguine]{Mohamed Zerguine}
\address{LEDPA, Universit\'e de Batna --2--\\ Facult\'e des Math\'ematiques et Informatique\\ D\'epartement de Math\'ematiques\\ 05000 Batna Alg\'erie}
\email{m.zerguine@univ-batna2.dz}

\title[Inviscid limit]
{Local persistence of geometric structures of the inviscid nonlinear Boussinesq system}
\keywords{Nonlinear Boussinesq sytem, Regular/singular vortex patch, Striated regularity, Local well-posedness.}
\subjclass[2010]{76D03, 76D05, 35B33, 35Q35.}

\maketitle

\begin{abstract}
Inspired by the recently published paper \cite{Hassainia-Hmidi}, the current paper investigates the local well-posedness for the generalized $2d-$Boussinesq system in the setting of regular/singular vortex patch. Under the condition that the initial vorticity $\omega_{0}={\bf 1}_{D_0}$, with $\partial D_0$ is a Jordan curve with a H\"older regularity $C^{1+\EE},\;0<\EE<1$ and the density is a smooth function, we show that the velocity vector field is locally well-posed and we also establish local-in-time regularity persistence of the advected patch. Although, in the case of the singular patch, the analysis is rather complicated due to the coupling phenomena of the system and the structure of the patch. For this purpose, we must assume that the initial nonlinear term is constant around the singular part of the patch boundary.
\end{abstract}

\tableofcontents

\section{Introduction}

\hspace{0.5cm}We consider the inviscid nonlinear Boussinesq equation in the incompressible regime, in which the density differences in fluids are usually small and their influence on the inertia of a fluid can often be neglected. In the ocean potential density variations are mostly smaller than $3/1000$ and in the atmosphere variations of potential density are typically small in the troposphere. The small density differences are however important when considering the buoyancy of fluid volumes.  The Boussinesq approximation consists in neglecting density differences in the equations except if they are multiplied by the gravity $g$ (in our case $G(\theta))$, which is usually much bigger than the vertical accelerations within the fluid, see \cite{Achim-Wirth}. The state of the fluid is described by the following set of equations.

\begin{equation}\label{Eq1}
\left\{ \begin{array}{ll}
\partial_{t}v+v\cdot\nabla v+\nabla p=G(\theta) & \textrm{if $(t,x)\in \RR_+\times\RR^2$,}\\
 \partial_{t}\theta+v\cdot\nabla \theta=0 & \textrm{if $(t,x)\in \RR_+\times\RR^2$,}\\
\Div v=0, &\\
({v},{\theta})_{| t=0}=({v}_0,{\theta}_0),\tag{NB}
\end{array} \right.
\end{equation}
where, $v(t,x)\in\RR^2$ refers to the velocity vector field localized in $x\in\RR^2$ at a time $t$, $\rho(t,x)\in\RR^{\star}_{+}$ stands for the mass density in the modeling of geophysical fluids, and $p(t,x)\in\RR$ is the force of the internal pressure which acts to enforce the incompressibility constraint $\Div v=0$ and it may be determined in terms of $v$ and $\theta$ using the Calder\'on-Zygmund transform
\begin{equation*}
p\equiv-\sum_{i,j=1}^{2}\mathfrak{R}_i\mathfrak{R}_j(v^iv^j)+\sum_{i=1}^{2}\mathfrak{L}_i(G_i(\theta))\triangleq p_v+p_\theta,
\end{equation*}
where $\mathfrak{R}_i=\frac{\partial_i}{\sqrt{-\Delta}}$ refers to the Riesz's operator, and $\mathfrak{L}_i=\frac{\partial_i}{\Delta}$ is a differential operator of order $-1$.

\hspace{0.5cm} In the general case, the action of the buoyancy forces is given  by the following vector-valued function $G(\theta)=(G_1(\theta),G_2(\theta))$ expressed by $G(\theta)=G_1(\theta)\vec{e}_1+G_2(\theta)\vec{e}_2$ and satisfies $G\in C^3(\RR^2)$ and $G(0)=(0,0)$, with $\vec e_1=(1,0)$ and $\vec e_2=(0,1)$.

\hspace*{0.5 cm}The authors S. Angenent, S. Hacker, and A. Tannenbaum in \cite{Angennet-Haker-Tannenbaum} claimed that such a system has found some applications in the optimal mass transport problem. Recently, Y. Brenier \cite{Brenier} developed this topic and established a close relation between optimal transport theory and classical convection theory for geophysical flows modeled by \eqref{Eq1}.

\hspace*{0.5 cm}Clearly, the system \eqref{Eq1} generalizes the classical inviscid Boussinesq system with $G_1(\theta) = 0$ and $G_2(\theta)=\theta$, which is given by the following system.

\begin{equation}\label{Eq2}
\left\{ \begin{array}{ll}
\partial_{t}v+v\cdot\nabla v+\nabla p=\theta\vec e_2 & \textrm{if $(t,x)\in \RR_+\times\RR^2$,}\\
 \partial_{t}\theta+v\cdot\nabla \theta=0 & \textrm{if $(t,x)\in \RR_+\times\RR^2$,}\\
\Div v=0, &\\
({v},{\theta})_{| t=0}=({v}_0,{\theta}_0).
\end{array} \right.\tag{EB}
\end{equation}
Let us note that the mathematical importance of the system \eqref{Eq1} and so \eqref{Eq2} is not only restricted to the motion of geophysical fluids but also to the formal resemblance with three-dimensional axisymmetric swirling flows. It can be shown that the solution develops singularities at time $t$ is related at the simultaneous blow-up of $\nabla \theta$ and the vorticity in $L^1_{t}L^\infty$, see \cite{Weinan-Shu}. Unfortunately, determining whether these quantities actually blow-up seems difficult as addressing a similar problem for $3d-$Euler incompressible system.

\hspace{0,5 cm} To make the presentation more convenient, we embark with a particular case, when $\theta\equiv\theta_0$ is constant, then \eqref{Eq2} reduces to the well-known Euler equations of the type
\begin{equation}\label{Eq3}
\left\{ \begin{array}{ll}
\partial_{t}v+v\cdot\nabla v+\nabla p=0 & \textrm{if $(t,x)\in \RR_+\times\RR^2$,}\\
\Div v=0, &\\
v_{| t=0}={v}_0.
\end{array} \right.\tag{E}
\end{equation}
A substantial literature regarding the well-posedness of the system \eqref{Eq3} has been produced intensively. More details and references related to \eqref{Eq3} and \eqref{Eq2}, we refer to \cite{Chae,Chae-Kim-Nam-1,Chae-Kim-Nam-2,Hmidi-Keraani,Kato-Ponce,Pak-Park,Vishik}. Notably, the classical regularity following Kato for the local well-posedness in time of \eqref{Eq3} requires that initial velocity in $H^s(\RR^N)$, with $s>\frac{N}{2}+1$ and the solutions remain smooth in any dimension \cite{Kato-Ponce}, while for planar motion or axisymmetric flows without swirl the classical solutions are global in time, yet singularities development in finite time of such solutions are an open problem. The regularity of the solutions of Euler equations has a close link with the vorticity dynamics. This latter is denoted by $\omega=\curl v$ and defined as a skew-matrix with entries
\begin{equation*}
\omega_{i,j}=\partial_j v^i-\partial_i v^j,\; 1\le i,j\le N.
\end{equation*}
A blow-up vorticity criterion for Kato's solutions following Beale-Kato-Majda \cite{Beale-Kato-Majda} reads as follows: if $T^\star$ is the maximal lifespan time then we have
\begin{equation}\label{Eq:3}
T^\star<\infty\Leftrightarrow\int_{0}^{T^\star}\|\omega(\tau)\|_{L^\infty}d\tau=\infty.
\end{equation}
In particular, for $N=2$, the vorticity can be identified as a scalar function of the type $\omega=\partial_2 v^1-\partial_1 v^1$ which evolves the following nonlinear transport equation
\begin{equation}\label{Eq:4}
\partial_{t}\omega+v\cdot\nabla\omega=0,
\end{equation}
where permits us to recover the velocity via Biot-Savart law that is to say
$$ v=\nabla^\perp\Delta^{-1}\omega,\; \nabla^\perp=(-\partial_2,\partial_1).$$
This shows that Euler equations have a Hamiltonian structure and in turn provides an infinity of conservation laws as $\|\omega(t)\|_{L^p}=\|\omega_0\|_{L^p}$ for all $p\in[1,\infty]$. Ergo, in light of \eqref{Eq:3} the Kato's solutions are globally well-posed in time. Similarly, Yudovich explored in \cite{Yudovich} this family of conservation laws, on one hand, to weaken the hyperbolic regularity, on the other hand, to formulate a new kind of weak solutions, showing that if $\omega_0\in L^1 \cap L^\infty$, the system \eqref{Eq3} admits a unique global solution. Even though, the velocity vector field loses its Lipschitzian regularity through the time and belongs to the well-known Log-Lipschitz class in short $LL$, while the involving flow $\Psi$ is only a planar homeomorphism. A subclass of Yudovitch's one encompasses the so-called vortex patches, that is $\omega_0={\bf 1}_{D_0}$ is uniformly distributed over a bounded planar domain $D_0\subset\RR^2$ which given in \eqref{Eq:4} is preserved through the time, meaning that $\omega(t)={\bf 1}_{D_t}$, with $D_t=\Psi(t,D_0)$ is the patch that moves with the flow, here we recall that the flow $\Psi$ is the unique solution of the integral equation
$$\Psi(t,x) = x + \int_0^t v(\tau,\Psi(\tau,x))d\tau.$$

The intrinsic conundrum here is regarding the regularity of the boundary evolution patch $D_t$. If we think to apply the Yudovitch's theory, we can not get any promises, since, $\Psi(t,\cdot)$ is degenerating in time, that is to say, $\Psi(t,\cdot)\in C^{e^{-\|\omega_0\|_{L^1\cap L^\infty} t}}$.

\hspace*{0,5 cm}The first systematic work in rigorous mathematics dates back to Chemin \cite{Chemin2}, see also P. Serfati in \cite{Serfati} and A. Berttozi and P. Constantin in \cite{Bertozi} which they mitigated the proof via some modifications of geometric type. The Chemin's formalism claims that when the boundary $\partial D_0$ is a Jordan curve part of $C^{1+\EE}-$class, with $0<\EE<1,$ then the regularity of $\partial D_t$ is shown to be retained over the time. The backbone of his paradigm is deeply based on the so-called logarithmic estimate, especially to ensconce that the velocity to be Lipschitz requires to invoke the striated regularity $\partial_{X_t}\omega$ in H\"older spaces of negative $C^{\EE-1}$. The specific family $X=(X_{t,\lambda})$ is opt for further advantages, it is non-degenerate, being tangential to $\partial D_t$ and each component $X_{t,\lambda}$ is defined as the push-forward of $X_{0,\lambda}$ by the flow $\Psi(t,\cdot)$ which in turns satisfies a transport equation of the form
\begin{equation}\label{Eq:5}
(\partial_t +v\cdot\nabla)X_{t,\lambda}=\partial_{X_{t,\lambda}}v.
\end{equation}
Furthermore, another advantage of this family lies in its commutation with the transport operator $\partial_t+v\cdot\nabla$ in the following way,
\begin{equation}\label{Eq:6}
(\partial_t +v\cdot\nabla)\partial_{X_{t,\lambda}}\omega=0.
\end{equation}

This latter provides for us a holistic view about the evolution of the tangential regularity of the vorticity, which in turns is the keystone for the analysis of the vortex patches topic. We point out that the Chemin's formalism is not restricted to the usual patches, but encloses the so-called generalized vortex patches. It even gets a more precise result for patches with singular boundary by showing that the regular part of the initial boundary propagates with the same regularity without being affected by the singular part which by the reversibility of the problem cannot be smoothed out by the dynamics and becomes better than $C^1$. In addition, the velocity vector field $v$ is Lipschitz far from the singular set and may undergo a blowup behavior near this set with a rate bounded by the logarithm of the distance from the singular set. Other connected subjects in differents situations for several systems can be found in \cite{Danchin1,Danchin3,Depauw,Dutrifoy,Hmidi-1,Hmidi-2,Hmidi-Zerguine,Meddour,Meddour-Zerguine,Zerguine} and the references therein.

\hspace*{0,5cm}In the matter of the local/global topic of the nonlinear Boussinesq system \eqref{Eq2}, with or without the dissipation regime has explored satisfactorily. We quote some of the authors' works by starting with S. Sulaiman, where she showed in \cite{Sulaiman} that \eqref{Eq1} is globally well-posed, where the density is governed by the transport-diffusion with fractional dissipation when the initial data $(v_0,\theta_0)$ belongs to the critical Besov space $B^{1+\frac{2}{p}}_{p,1} \times B^{-\alpha+1+\frac{2}{p}}_{p,1}\cap L^\infty$ and $G$ is a $C^5$ function with $G(0)=0$. Recently, G. Wu and X. Zheng proved in \cite{Wu} that \eqref{Eq1}, with the presence of the vertical dissipation in the velocity and density equations, admits a unique global solution, once $(v_0,\theta_0)\in H^1\times L^2\cap L^\infty$, with $(\partial_1\omega_0,\partial_1\theta_0,\partial_1^2\theta_0)\in (L^2)^3$ and $G \in C^2$ satisfy $G(0)=0$. For more information about the nonlinear Boussinesq system we refer the reader to the references \cite{Blanchard,Chen,Diaz,Milhaljan} and the references cited therein.

\hspace*{0,5cm}The doctrine of the regular (smooth) vortex patches for the coupled equations began in the work of F. Fanelli \cite{Fanelli}, where he treated the inhomogeneous Euler system. Afterward, T. Hmidi and the second author occupied with the Boussinesq equations \eqref{Eq2} when the density satisfies the transport-diffusion equation with a full Laplacian and showed that this latter is globally well-posed in time and also provided that the vorticity can be split into a singular part which is a vortex patch term and a regular part, which is deeply related to the smoothing effect for density $\omega(t)={\bf 1}_{\Omega_t}+\widetilde{\theta}(t)$. Lately, the second author settled an analogous global result, where the full Laplacian is replaced by the fractional one and gained a sharper result compared to Chemin's result concerning the Euler's system. In the same way, Hassainia and Hmidi in \cite{Hassainia-Hmidi} stated recently an even more accurate result on the local well-posedness problem for \eqref{Eq2} in the context of a regular/singular patch. For more related subject we refer to \cite{Danchin-Paicu,Danchin-Zhang,Meddour,Paicu-Zhu-1,Paicu-Zhu-2,Zerguine}.


\hspace*{0,5 cm}It could be interesting to derive a simalar result as in \cite{Hassainia-Hmidi} to the nonlinear Boussinesq system \eqref{Eq1}. Our first main result cares with the local well-posedness for the system \eqref{Eq1} in the case of regular vortex patch. To be precise, we will prove.

\begin{Theorem}\label{thm1}
Given $G\in C^3$ satisfy $G(0)=0$. Let $0<\epsilon<1, D_0$ be a bounded domain of $\mathbb{R}^{2}$ with $\partial D_0$ is a Jordan curve in H\"older space $ C^{1+\epsilon}$ and $v_{0}$ be a divergence-free vector field such that $\omega_{0}={\bf 1}_{D_0}$ and $\theta_{0}\in L^{2}(\RR^2)\cap C^{1+\epsilon}(\RR^2)$  with $\nabla\theta_{0} \in L^{a}(\RR^2)$ such that $ 1<a<2 $.
 Then there exists $T>0$ such that the system \eqref{Eq1} admits a unique solution  $(v,\theta) \in \big(L^{\infty}([0,T];\Lip(\RR^2)\big)^2.$
Besides, for all $0<t<T$ the regularity of $\partial D_t = \Psi(t,\partial D_0) $ persits through the time in the sense that belongs also in $C^{1+\epsilon}$.
\end{Theorem}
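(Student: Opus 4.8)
The plan is to recast \eqref{Eq1} in vorticity form and run Chemin's striated--regularity scheme, the new feature being the buoyancy source produced by $G(\theta)$. Taking the scalar curl of the momentum equation and writing $\omega=\curl v$ gives
\begin{equation*}
\partial_t\omega+v\cdot\nabla\omega=\curl G(\theta)=G_2'(\theta)\partial_1\theta-G_1'(\theta)\partial_2\theta,
\end{equation*}
while $\theta$ stays purely transported by \eqref{Eq:4}. First I would collect the crude a priori bounds: incompressibility makes the flow volume preserving, so $\|\theta(t)\|_{L^2\cap L^\infty}=\|\theta_0\|_{L^2\cap L^\infty}$, and as long as $v$ is Lipschitz, $\|\nabla\theta(t)\|_{L^a}\le\|\nabla\theta_0\|_{L^a}e^{CV(t)}$ with $V(t)=\int_0^t\|\nabla v\|_{L^\infty}$. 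Feeding this into the vorticity equation and using that $G\in C^3$, $G(0)=0$ forces $\curl G(\theta)\in L^a\cap L^\infty$, so $\|\omega(t)\|_{L^a\cap L^\infty}$ is controlled. The restriction $1<a<2$ is exactly what makes the Biot--Savart kernel integrable against $\omega\in L^a\cap L^\infty$ on both sides of the critical exponent $2$, so that $v\in L^\infty$ and the flow $\Psi$ is well defined.

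Next I would set up the tangential structure. Choosing an admissible family $X=(X_{t,\lambda})$ with $X_{0,\lambda}\in C^\epsilon$ tangent to $\partial D_0$ (e.g. $X_0=\nabla^\perp\varphi$ for a $C^{1+\epsilon}$ defining function, so $\Div X_0=0$) and transporting it by \eqref{Eq:5}, the commutation computation underlying \eqref{Eq:6} now produces the inhomogeneous identity
\begin{equation*}
(\partial_t+v\cdot\nabla)\,\partial_{X_{t,\lambda}}\omega=\partial_{X_{t,\lambda}}\big(\curl G(\theta)\big).
\end{equation*}
Together with the transport equations for $X_{t,\lambda}$ and the logarithmic estimate
\begin{equation*}
\|\nabla v\|_{L^\infty}\lesssim\|\omega\|_{L^a}+\|\omega\|_{L^\infty}\log\Big(e+\tfrac{\|\partial_X\omega\|_{C^{\epsilon-1}}+\|X\|_{C^\epsilon}\|\omega\|_{L^\infty}}{I(X)\,\|\omega\|_{L^\infty}}\Big),
\end{equation*}
the whole scheme reduces to propagating $\|\omega\|_{L^\infty}$, $\|X_{t,\lambda}\|_{C^\epsilon}$, the non-degeneracy $I(X_t)=\inf_x\sup_\lambda|X_{t,\lambda}(x)|$, and the striated norm $\|\partial_{X_{t,\lambda}}\omega\|_{C^{\epsilon-1}}$, then closing by Gronwall on a short interval $[0,T]$.

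The hard part is controlling the source $\partial_X\curl G(\theta)$ in $C^{\epsilon-1}$, and this is where the coupling bites. Writing $\partial_X\curl G(\theta)=\curl\!\big(\partial_X G(\theta)\big)+[\partial_X,\curl]G(\theta)$, the first piece is harmless: $\partial_X G(\theta)=G'(\theta)\,\partial_X\theta\in C^\epsilon$ because $\partial_X\theta$ obeys the \emph{homogeneous} transport equation $(\partial_t+v\cdot\nabla)\partial_X\theta=0$ and hence stays in $C^\epsilon$, so its curl lands in $C^{\epsilon-1}$. The genuinely delicate piece is the commutator $[\partial_X,\curl]G(\theta)$, made of products $(\partial_iX)\cdot\nabla(G_j(\theta))$ of a factor in $C^{\epsilon-1}$ against $\nabla\theta\in C^\epsilon$; the Bony remainder $R(\partial_iX,\nabla\theta)$ is only directly controllable when $2\epsilon-1>0$, and for general $\epsilon\in(0,1)$ one must exploit extra structure. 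My plan here is to work in the co-normal (striated) Hölder scale for \emph{both} unknowns, propagating $\partial_X\theta\in C^\epsilon$ as above, and to use $\Div X=0$ together with the tailored paraproduct and commutator calculus to rewrite the dangerous remainder as (para)products that return to $C^{\epsilon-1}$ once one derivative is shifted onto the already-regular $\partial_X\theta$. This is the vector-buoyancy analogue of the scalar coupling handled in \cite{Hassainia-Hmidi}, and it is the only place where $G\in C^3$ (rather than merely $C^{1+\epsilon}$) is needed, so as to keep $G_j'(\theta),\,G_j''(\theta)\in C^\epsilon$.

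Finally, once these quantities are bounded on $[0,T]$, existence follows by a standard regularization/compactness argument and uniqueness from a stability estimate in a weaker norm (e.g. $L^2$ or a negative Hölder space) exploiting the log-Lipschitz bound on $v$; the density bound $\|\nabla\theta(t)\|_{L^\infty}\le\|\nabla\theta_0\|_{L^\infty}e^{V(t)}$ then yields $(v,\theta)\in(L^\infty_T\Lip)^2$. The persistence of $\partial D_t\in C^{1+\epsilon}$ is read off as in Chemin: the Lipschitz bound on $v$ makes $\Psi(t,\cdot)$ a bi-Lipschitz homeomorphism, while the boundedness of $\|X_{t,\lambda}\|_{C^\epsilon}$, the striated control of $\nabla v$, and the non-degeneracy $I(X_t)>0$ guarantee that the push-forward tangent field stays $C^\epsilon$ and nowhere vanishing along $\Psi(t,\partial D_0)$, so $\partial D_t$ retains its $C^{1+\epsilon}$ parametrization.
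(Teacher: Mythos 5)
Your proposal follows essentially the same route as the paper's proof: the vorticity--density formulation \eqref{Eqv-d}, a transported tangential family obeying \eqref{Eq:5}, the splitting of the stretched source into $\partial_j\big(\partial_XG_i(\theta)\big)$ plus a commutator, Chemin's logarithmic estimate (Theorem \ref{thm3}) closed by Gronwall on a short interval, regularization/compactness for existence, an $L^q$ stability estimate for uniqueness, and the flow parametrization of $\partial D_t$ for boundary persistence; this is precisely the content of Lemma \ref{cor2}, Proposition \ref{Lip-v} and the final subsections of Section \ref{S3}.

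The one step that would fail as written is your treatment of the commutator. First, $[\partial_X,\partial_j]G_i(\theta)=-(\partial_jX)\cdot\nabla G_i(\theta)$ (see \eqref{com:1}) contains no occurrence of $\partial_X\theta$, so your plan of ``shifting one derivative onto the already-regular $\partial_X\theta$'' cannot be executed on this term. Second, the pairing you worry about, $R(\partial_jX,\nabla\theta)$ with $\nabla\theta\in C^\epsilon$, is not merely hard to control when $2\epsilon\le1$: it is unavailable at positive times altogether, because the $C^{1+\epsilon}$ regularity of the density does not propagate under a merely Lipschitz velocity (the paper stresses this after Theorem \ref{thm1}); only $\nabla\theta(t)\in L^\infty$ survives. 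The actual resolution is the paper's Corollary \ref{Cor1} (Corollary 3.1 of \cite{Hassainia-Hmidi}): for every $\epsilon\in(0,1)$ and every Lipschitz scalar $g$, $\|(\partial_jX)\cdot\nabla g\|_{\epsilon-1}\lesssim\|\nabla g\|_{L^\infty}\big(\|X\|_\epsilon+\|\Div X\|_\epsilon\big)$. There the only pairing of $\partial_jX\in C^{\epsilon-1}$ against $\nabla g$ is the paraproduct $T_{\nabla g}\partial_jX$, bounded by Lemma \ref{L} since $\epsilon-1<0$ and $\nabla g\in L^\infty$; the remaining paraproduct and remainder are rewritten by moving the derivative through the gradient structure of $\partial_jX$, leaving contributions governed by $\Div X\in C^\epsilon$ --- no lower bound on $\epsilon$ enters. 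Applied with $g=G_i(\theta)$, whose gradient $G_i'(\theta)\nabla\theta$ is bounded by the maximum principle and \eqref{G'-bounded}, this closes the estimate. Note also that you cannot lean on $\Div X=0$: non-degeneracy of an admissible family forces a complementary field such as the paper's $X_{0,1}=(1-\varphi)\vec e_2$, whose divergence is nonzero; fortunately the corollary needs only $\Div X\in C^\epsilon$. Finally, for the good part $\partial_j\big(\partial_XG_i(\theta)\big)$ your route through $G'(\theta)\partial_X\theta$ works but requires composition estimates on $G'(\theta(t))$ in $C^\epsilon$ at every time; the paper's variant is cleaner: since $G_i(\theta)$ is itself transported, $\partial_XG_i(\theta)$ solves the homogeneous transport equation, so one invokes the algebra identity $\partial_{X_0}G_i(\theta_0)=G_i'(\theta_0)\partial_{X_0}\theta_0$ and Theorem \ref{thm2.8} only at $t=0$, which is exactly where the hypothesis $G\in C^3$ is consumed.
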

Let us give a few remarks related to the above theorem.
\begin{Remark}
In our context the assumption that $G$ is a smooth function $C^3$ over $\RR^2$ is necessary to ensure that $G\circ\theta_0$ belongs to $C^{1+\epsilon}$, and lower than $F\in C^5$ to that of \cite{Sulaiman}.
\end{Remark}
\begin{Remark}
Let us mention that the initial density $\theta_0 \in C^{1+\epsilon}$ for $0<\epsilon<1$, doesn't persists along the time, it requires the regularity more than Lipschitz for the velocity vector field. We shall mitigate this assumption in more general version of Theorem \ref{thm1}.
\end{Remark}
\begin{Remark} To establish a classical $L^p-$ estimate for $\omega-$equation, we shall need to estimate the composition $G'_i\circ\theta$ in $L^\infty$ space. Indeed, we rewrite the system \eqref{Eq1} under the vorticity-density formulation to obtain
\begin{equation}\label{Eqv-d}
\left\{ \begin{array}{ll}
\partial_{t}\omega+v\cdot\nabla \omega=\partial_1(G_2(\theta))-\partial_2(G_1(\theta)) & \textrm{if $(t,x)\in \RR_+\times\RR^2$,}\\
 \partial_{t}\theta+v\cdot\nabla \theta=0 & \textrm{if $(t,x)\in \RR_+\times\RR^2$,}\\
\Div v=0, &\\
({\omega},{\theta})_{| t=0}=({\omega}_0,{\theta}_0).
\end{array} \right.
\end{equation}
Consequently, for $p\in[1,\infty]$ and $t\ge0$ we have
\begin{equation*}
\|\omega(t)\|_{L^p}\le\|\omega_0\|_{L^p}+\sum_{i=1}^{2}\int_{\RR^2}\|G'_i\circ\theta(\tau)\|_{L^\infty}\|\nabla\theta(\tau)\|_{L^p}d\tau.
\end{equation*}
Luckily, $\|G'_i\circ\theta(\tau)\|_{L^\infty}$ is bounded due to the action of composition law in Besov spaces, see Theorem \ref{thm2.8} below and the fact $\theta$ is transported by the flow.
\end{Remark}


\hspace*{0,5 cm}In order to prove the previous theorem, our proof is inspired from the recent work of Hassainia and Hmidi, developed in \cite{Hassainia-Hmidi}. Concerning the system \eqref{Eq2}, with some depth modifications due to the nonlinear source term $G(\theta)$, which creates some technical difficulties. One of them reflects in the control of the Lipschitz norm of the velocity. If we apply the directional derivative $\partial_X$ to the first equation of \eqref{Eqv-d}, one can get
\begin{equation*}
(\partial_t + v\cdot\nabla)\partial_X\omega=\partial_X\big((\partial_1G_2(\theta))-(\partial_2G_1(\theta)\big).
\end{equation*}
We notice that the structure of the transport equation allows us to reduce the problem in controlling $\partial_X (G_i(\theta))$ in $C^\EE$ for $i=\lbrace1,2\rbrace$ by rewriting $\partial_X\big((\partial_1G_2(\theta))-(\partial_2G_1\theta)\big)$ in terms of commutators, in other words,
$$
\partial_X\big((\partial_1G_2(\theta))-(\partial_2G_1\theta)\big)=\partial_1\big(\partial_XG_2(\theta)\big)+\big[\partial_X,\partial_1\big]G_2(\theta)-\partial_2(\partial_XG_1(\theta))-\big[\partial_X,\partial_2\big]G_1(\theta),
$$
where $\big[\partial_X,\partial_2\big]G_i(\theta)$ behaves well in the H\"older space $C^{\EE-1}$ for the reason that
$$
\big[\partial_X,\partial_j\big]G_i(\theta)=-(\partial_j X)\cdot\nabla (G_i(\theta))=-(\partial_j X)\cdot (G'_i(\theta)\nabla\theta)
$$
and the fact $G'_i(\theta)$ is bounded. Besides, the estimate $\big(\partial_XG_i(\theta)\big)$ in $C^\EE$ lies in fact that $G_i(\theta)$ is also transported by the flow, that is $\partial_tG_i+v\cdot\nabla G_i=0$.
\begin{Remark}
A direct computation of $\partial_X\big(G'_2(\theta)(\partial_1\theta)-G'_1(\theta)(\partial_2\theta)\big) $, leads to the following term
$$X_1G''_2(\theta)(\partial_1\theta)^2 + X_2G''_2(\theta)(\partial_1\theta\partial_2\theta)+\partial_{X_t}\partial_1\theta,$$
so that, we need to assume that $G$ is at least of class $C^5$.  The assumption $G\in C^3$ arises from the fact $\theta$ and $G_i\circ\theta$ are transported by the flow.
\end{Remark}
\hspace{0.5cm}
Our second task of this paper occupies with the evolution singularities in the boundary $\partial D_0$ of vortex patch $\omega_0={\bf 1} _ {D_0} $ under the condition that these singularities constitute a finite set generally denoted by $\Sigma_0$. But the situation, in this case, is more complicated because of the coupled phenomena, in particular, the nonlinear source term $G(\theta)$. To surmount, these difficulties we shall, in addition, assume that $G(\theta_0)$ is constant around $\Sigma_0$, with $\theta_0$ refers to the initial density. This remarkable property will be conserved along the trajectories due to the fact that the density $\theta$ is transported by the flow and so $G(\theta)$ is also.

\hspace{0.5cm}The second main result cares with the local well-posedness topic of the system \eqref{Eq1} in the setting of singular patch. To be precise, we will prove the following theorem.

\begin{Theorem}\label{thm2}
Let $(\EE,h,a)\in ]0,1[\times]0,e^{-1}[\times]1,2[$ and $\Sigma_0$ be a closed subset of $\RR^2$. For $D_0$ a bounded domain of $\RR^2$ whose boundary $\partial D_0$ is a Jordan curve of $C^{1+\EE}$ regularity outside $\Sigma_0$ and an initial velocity $v_0$ in free-divergence, with $\omega_0={\bf 1}_{D_0}$. Assume that $\theta_0\in L^2(\RR^2)\cap C^{\EE+1}(\RR^2)$ with  $\theta_0\in W^{1,a}(\RR^2)$ and for $i\in\lbrace1,2\rbrace, $  $G_i(\theta_0)$ is a constant in small neighborhood of $\Sigma_0$. Then, there exists $T>0$ such that \eqref{Eq1} has a unique local solution

\[
(\omega,\theta)\in L^\infty\big([0,T];L^a(\RR^2) \cap L^\infty(\RR^2)\big) \times L^\infty\big([0,T];W_{G}^{1,a}(\RR^2) \cap W_{G}^{1,\infty}(\RR^2)\footnotemark{}\big).
\]

\footnotetext{The space $W_{G}^{1,a}(\RR^2)$ is defined in general case by $W_{G}^{1,p}(\RR^2)= \lbrace \theta\in L^p(\RR^2,\RR) : \nabla (G(\theta))\in L^p(\RR^2,\RR^2) \rbrace$ for every $p\in[1,\infty]$.}

Besides, the velocity vector field $v$ is Lipschitz outside the set $\Sigma_t$, with $\Sigma_t = \Psi(t,\Sigma_0)$ in the following way
\begin{equation*}
\sup_{h\in(0,e^{-1}]}\frac{\|\nabla v(t)\|_{L^{\infty}((\Sigma_t)^{c}_{h})}}{-\log h}\in L^\infty([0,T]),
\end{equation*}
where $(\Sigma_t)^{c}_{h}=\big\{x\in\RR^2 : d(x,\Sigma_t)\ge h\big\}$. In addition, $\partial D_t = \Psi(t,\partial D_0)$ remains locally in the class $C^{1+\epsilon}$ outside $\Sigma_t$. 
\end{Theorem}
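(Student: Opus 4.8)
The plan is to transplant the localized striated-regularity method of Chemin, in the form developed in \cite{Hassainia-Hmidi} for the classical system \eqref{Eq2}, to the nonlinear system \eqref{Eq1}; the only genuinely new ingredient is the treatment of the source term $\partial_1(G_2(\theta))-\partial_2(G_1(\theta))$ in \eqref{Eqv-d} near the singular set. I would begin by selecting an admissible family of vector fields $X_0=(X_{0,\lambda})_\lambda$ tangent to $\partial D_0$, uniformly non-degenerate on each compact subset of $\RR^2\setminus\Sigma_0$ and degenerating as one approaches $\Sigma_0$; such a family encodes the local $C^{1+\EE}$ smoothness of the boundary away from the singularities. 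Its push-forward $X_{t,\lambda}$ by the flow $\Psi$ solves \eqref{Eq:5}, and a Gr\"onwall argument on the H\"older norm of $\nabla\Psi$ (finite away from $\Sigma_t$ thanks to the log-Lipschitz control below) will keep it non-degenerate on compacts of $\RR^2\setminus\Sigma_t$, with $\Sigma_t=\Psi(t,\Sigma_0)$.

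The pivotal observation is that the hypothesis that $G_i(\theta_0)$ is constant near $\Sigma_0$ makes the source term vanish identically near the singular set initially, since $\nabla(G_i(\theta_0))\equiv0$ there. Because each $G_i(\theta)$ is transported by the flow, $\partial_tG_i(\theta)+v\cdot\nabla G_i(\theta)=0$, this property is preserved: $\nabla(G_i(\theta(t)))$ stays zero in a neighbourhood of $\Sigma_t$. Hence, locally near $\Sigma_t$, the equation \eqref{Eqv-d} degenerates to the pure transport equation $\partial_t\omega+v\cdot\nabla\omega=0$, and Chemin's singular-patch analysis for \eqref{Eq3} applies there unchanged; the coupling only affects the dynamics away from the singularities, which is exactly where $v$ is to be shown Lipschitz.

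For the striated estimates I would apply $\partial_{X_{t,\lambda}}$ to the vorticity equation and use the commutator decomposition recorded in the excerpt, namely
\[
(\partial_t+v\cdot\nabla)\partial_X\omega=\partial_1\big(\partial_XG_2(\theta)\big)+[\partial_X,\partial_1]G_2(\theta)-\partial_2\big(\partial_XG_1(\theta)\big)-[\partial_X,\partial_2]G_1(\theta).
\]
Since $X_{t,\lambda}$ is the push-forward of $X_{0,\lambda}$, the operator $\partial_X$ commutes with $\partial_t+v\cdot\nabla$; as each $G_i(\theta)$ is transported, so is $\partial_X(G_i(\theta))$, whence its $C^\EE$ norm is controlled solely through the initial datum and the flow regularity. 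The commutators reduce to $[\partial_X,\partial_j]G_i(\theta)=-(\partial_jX)\cdot G'_i(\theta)\nabla\theta$, which lie in $C^{\EE-1}$ because $G'_i(\theta)$ is bounded (composition law, Theorem \ref{thm2.8}) and $\nabla\theta\in L^a\cap L^\infty$ is propagated along the flow. Feeding the resulting $C^{\EE-1}$ bound on $\partial_X\omega$, together with the $L^a\cap L^\infty$ bound on $\omega$ from the formula in the preceding remark, into the localized logarithmic estimate yields both the Lipschitz bound for $v$ on $(\Sigma_t)^{c}_{h}$ and the advertised $-\log h$ blow-up rate as one approaches $\Sigma_t$.

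The existence of a solution would then follow by a regularization scheme---mollifying the data, solving the smooth problems, and passing to the limit using the uniform a priori bounds and compactness---while uniqueness would be obtained by a stability estimate in a weak norm adapted to the log-Lipschitz velocity, as in the Yudovich--Chemin framework. Finally, the persistence of the local $C^{1+\EE}$ regularity of $\partial D_t$ outside $\Sigma_t$ is read off from the non-degeneracy of $X_{t,\lambda}$ and the $C^\EE$ control of $\partial_X\omega$. The principal difficulty I anticipate is closing the \emph{coupled} a priori estimates: the vorticity is no longer conserved but forced by $G(\theta)$, so the striated norms of $\omega$, of $\theta$, and of each $G_i(\theta)$ must be propagated simultaneously and kept finite on every compact away from $\Sigma_t$, all within the tight regularity budget $G\in C^3$ and $\nabla\theta_0\in L^a$, $1<a<2$.
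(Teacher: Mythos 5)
Your outline does follow the same road map as the paper's proof (truncated admissible families, transport of $G_i(\theta)$, commutator decomposition, localized logarithmic estimate, mollification plus a Yudovich-type uniqueness argument), but it contains a genuine gap at the point where the coupled estimates are closed: you control both the commutators $[\partial_X,\partial_j]G_i(\theta)=-(\partial_jX)\cdot G_i'(\theta)\nabla\theta$ and the $L^a\cap L^\infty$ bound on $\omega$ by asserting that ``$\nabla\theta\in L^a\cap L^\infty$ is propagated along the flow.'' In the singular setting this propagation is false, and it is precisely the obstruction the hypothesis on $G_i(\theta_0)$ is designed to circumvent. The transport estimate
\begin{equation*}
\|\nabla\theta(t)\|_{L^p}\le\|\nabla\theta_0\|_{L^p}+\int_0^t\big\|\nabla v(\tau)\big\|_{L^\infty(\supp\nabla\theta(\tau))}\,\|\nabla\theta(\tau)\|_{L^p}\,d\tau
\end{equation*}
needs a Lipschitz bound for $v$ on the support of $\nabla\theta$; since only $G_i(\theta_0)$ --- not $\theta_0$ itself --- is constant near $\Sigma_0$, the support of $\nabla\theta(t)$ reaches into every neighbourhood of $\Sigma_t$, exactly where $\|\nabla v\|_{L^\infty}$ may blow up like $-\Log d(\cdot,\Sigma_t)$. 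This is why Theorem \ref{thm2} is stated in the spaces $W_{G}^{1,a}\cap W_{G}^{1,\infty}$, i.e. it controls $\nabla(G(\theta))$ and never $\nabla\theta$.

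The paper's resolution (Proposition \ref{prop5}) is to run every Gronwall argument on $\nabla(G_i(\theta))$ instead of $\nabla\theta$: because $\supp\nabla\big(G_i(\theta(\tau))\big)\subset(\Sigma_\tau)^c_{\delta_\tau(r)}$ by Lemma \ref{Lem6}, the velocity gradient restricted to that set is bounded by $-W(\tau)\Log r$, which yields $\|\nabla(G_i(\theta(t)))\|_{L^p}\lesssim\|\nabla\theta_0\|_{L^p}\,r^{-C\int_0^tW(\tau)d\tau}$, and it is this quantity that then enters both the commutator bound (via Corollary \ref{Cor1}) and the vorticity bound \eqref{prop5:2}. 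Your version of these two steps, borrowed from the regular-patch remark, is only valid when $v$ is globally Lipschitz, i.e. for Theorem \ref{thm1}. A second, lesser issue: your claim that near $\Sigma_t$ ``Chemin's singular-patch analysis applies unchanged'' glosses over the non-locality of Biot--Savart; one cannot localize the argument, which is why the paper works with the quantified $\Sigma$-admissible families of order $\Theta$ of Definition \ref{D7} (supports at scale $h^{\alpha}$, norms degrading like powers of $h$) and closes the bound on $W$ through the bootstrap choice of $T$ in Proposition \ref{prop7}, with all time factors of the form $h^{-C\int_0^tW}$ rather than $e^{CV(t)}$. Replacing $\nabla\theta$ by $\nabla(G_i(\theta))$ throughout, and $e^{CV}$ by these weights, repairs your argument and reproduces the paper's proof.
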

\hspace{0.5cm}A few remarks are in order.
\begin{Remark} The set of initial singularities $\Sigma_0$ isn't arbitrary, it should satisfy the following geometric property: there exist two constants $\beta>0$ and $C>0$ and a neighborhood $V_0$ of $\partial D_0$ such that for every $x\in V_{0}$
\begin{equation*}
|\nabla f(x)|\ge Cd(x,\Sigma_0)^{\beta},
\end{equation*}
where $f$ is a smooth function from $\RR^2$ into $\RR$ so that
\begin{equation*}
D_{0}=\big\{x\in\RR^2 : f(x)>0\big\},\quad \partial D_{0}=\big\{x\in\RR^2 : f(x)=0\big\}.
\end{equation*}
Meaning that the curves defining $\partial D_0$ are not tangent to one another at infinite order at the singular points.
\end{Remark}
\begin{Remark} The quantity $G_i(\theta_0)$ is constant around $\Sigma_0$, so $G_i(\theta)$ is also constant, however, around $\Sigma_t$ and so $\supp G_i(\theta)$ is included in $\Sigma_t$ so  Proposition \ref{prop4} is required in this case. On the other hand, $G_i(\theta_0)$ is constant is more general than $\theta_0$ is constant to that of \cite{Hassainia-Hmidi}.
\end{Remark}
\begin{Remark} The fact that $G_i(\theta_0)$ is constant around $\Sigma_0$ forces us to deal in what follows with it instead $\theta_0$ either in a priori estimates or existence and uniqueness topic. This is regards as a generalization of results to that \cite{Hassainia-Hmidi}.
\end{Remark}

Let us briefly outline the proof of the previous theorem. We will explore the Chemin's approach for bidimensional Euler equations \eqref{Eq3} and the recently established work by Hassainia and Hmidi in \cite{Hassainia-Hmidi} concerning the system \eqref{Eq2}. Meaning that we control the Lipschitz norm of the velocity $\|\nabla v\|_{L^\infty}$ by the striated regularity of its vorticity $\|\omega\|_{C^{\EE}(\mathcal{X}_t)}$ via a logarithmic estimate, where $\mathcal{X}_t=(X_{t,\lambda,h})_{(\lambda,h)\in \Lambda\times ]0,e^{-1}]}$ is a family of vector fields, so that each component $X_{t,\lambda,h}$ satisfies the following inhomogeneous equation
\begin{equation*}
\partial_{t}X_{t,\lambda,h}+v\cdot\nabla X_{t,\lambda,h}= \partial_{X_{t,\lambda,h}}v,
\end{equation*}
where the new subscript $h$ appears in the setting of singular patch refers to the trancate parameter around the set of singulirities. To treat the problem in the presence of this kind of singularities we must develop the two terms $\partial_1\big(G_2(\theta)\big)$ but in this case we will find some difficulties regarding the treatment of the term $\|\partial_j v\cdot\nabla\big(G_i(\theta(\tau))\big)\|_{L^p}$ which comes from the following classical $L^p$ estimates
\begin{equation*}
\|\partial_j \big(G_i(\theta(t))\big)\|_{L^{p}} \leq  \|\partial_j  \big(G_i(\theta_0)\big)\|_{L^{p}} + \int_0^t \|\partial_j v\cdot\nabla\big(G_i(\theta(\tau))\big)\|_{L^p}d\tau.
\end{equation*}
To remedy this drawback we must assume that the initial buoyancy forces $G(\theta_0)$ is constant, enclosing the singularities set. This latter is advected by the flow and also satisfying the transport equation, so that the Proposition \label{prop4} is then applicable.\\

{\bf Organization of the paper.}  Section \ref{S3} concerns the case of a smooth vortex patch. We start by some basic useful tools and definitions and a concise of Littlewood-Paley theory, where we state the cut-off operators, paradifferential calculus, and some properties of Besov spaces and particular cases. Thereafter, we undertake the preparatory part for the smooth vortex patch, and we give some of the prior estimates for the vorticity and the density. Finally, we discuss the proof of the main result in several steps. In section \ref{S4}, we return to the case of the singular patch. First, we state the suitable framework for them and we will follow the same steps as in section \ref{S3}.
\section{Regular vortex patches}\label{S3}
In this part we shall recall some tools  the so called Littlwood-paley operators and Bony's decomposition, we will also introduce some function spaces as Besov and H\"older spaces.

{\bf Notations.} During this work, we will agree some useful notations.

$\bullet$ We denote by $C$ any positive constant which changes from line to another and we shall use the notation $X\lesssim Y$ instead the notation $\exists C > 0 $ such that $X\leq C Y$ and $C_0$ is a positive constant depending on the initial data.

$\bullet$ For every $ p\in[1,\infty], \| \cdot \|_{L^p}$ denotes the $L^p$ norms.

$\bullet$ For $ u \in C^\epsilon, \|  \cdot\|_{\epsilon}$ denotes the $C^\epsilon$ norms.

$\bullet$ For $P,Q$ two operators, the commutator $\big[P,Q\big]$ is defined by $PQ-QP$.

\subsection{Preparatory and preliminaries}\label{Little-P}
We recall the Littlwood-Paley theory based on nonhomogeneous dyadic partition of unity. Let $(\chi,\varphi)\in\mathcal{D}(\RR^2)\times \mathcal{D}(\RR^2\backslash\{0\})$ be a radial cut-off functions be such that $\supp\chi\subset \{\xi\in\RR^2: |\xi|\le1\}$ and $\supp\varphi\subset\{\xi\in\RR^2: 1/2\le|\xi|\le2\}$, so that

\begin{equation*}
\chi(\xi)+\sum_{q\ge0}\varphi(2^{-q}\xi)=1.
\end{equation*}
For every $ u\in S'(\RR^2)$ we define the cut-off operators as follows,
\begin{displaymath}
\Delta_{-1}v\equiv \mathcal{F}^{-1}(\chi \small\widehat{v}),\qquad  \Delta_qv \equiv \mathcal{F}^{-1}(\phi(2^{-q}.) \small\widehat{v}),\qquad S_qv =\sum_{-1 \leq j\leq q-1} \Delta_j v,  \qquad \forall q \in \mathbb{N}.
\end{displaymath}
We can see also the cut-off operators as a Fourier multipliers.
Now we recall the Besov space in terms of the Littlewood-Paley operators.

\begin{Definition}
For $(s,p,r)\in\mathbb{R}\times[1,+\infty]^2$. The inhomogeneous Besov space $\mathrm{B}_{p,r}^s$ is the set of tempered distributions $v\in S'$ such that
$$\|v\|_{\mathrm{B}_{p,r}^s}\triangleq\big(2^{qs}\|\Delta_qv\|_{L^p}\big)_{\mathfrak{l}^r(\mathbb{Z})}<\infty.$$
\end{Definition}

\begin{remas}
We notice that :

$\bullet$ For $ s\in\mathbb{R}_{+} \char`\\ \mathbb{N}$ the H\"older space noted by $C^s $ coincides with $\mathrm{B}_{\infty,\infty}^s.$

$\bullet$ $(C^s,\|\cdot\|_{s})$ is a Banach space that coincides with the usual H\"older space $C^s$ with equivalent norms,

\begin{equation*}
\|v\|_{s}\lesssim\|v\|_{L^{\infty}}+\sup_{x\neq y}\frac{|v(x)-v(y)|}{|x-y|^s}\lesssim\|v\|_{s}.
\end{equation*}
$\bullet$ If $s\in\mathbb{N}$, the obtained space is so-called H\"older-Zygmund space and still denoted by $\mathrm{B}_{\infty,\infty}^s.$
\end{remas}

Let us now introduce the well-known Bony's decomposition, which split the product of two tempered distributions into three parts. Namely : for $u,v\in S'$
$$
uv=T_uv+T_vu+\mathcal{R}(u,v),
$$
with
$$
T_uv=\sum_q S_{q-1}u\Delta_qv,\quad\mathcal{R}(u,v)=\sum_q\Delta_q \widetilde{\Delta}_qv\quad\text{ and }\quad\widetilde{\Delta}_q=\Delta_{q-1}+\Delta_q+\Delta_{q+1}.
$$
The next result deals with the action of Bony's decomposition in the H\"older space. For more details we refer the reader to \cite{Bahouri-Chemin-Danchin}.
\begin{Lemma}\label{L}
Let $r$ be a real number. If $r<0$, the operator $T$ is continuous from $L^{\infty} \times C^r$ in $C^r$ and from $C^r \times L^{\infty}$ in $C^r.$ Furthermore, we have
$$\|T_u v\|_r + \|T_v u\|_r \leq C(r)\|u\|_{L^{\infty}} \|v\|_r.$$
If $r>0$, the operator $\mathcal{R}$ is continuous from $L^{\infty} \times C^r$ in $C^r.$ Moreover, we have
$$\|\mathcal{R}(u,v)\|_r \leq C(r) \|u\|_{L^{\infty}} \|v\|_r.$$
\end{Lemma}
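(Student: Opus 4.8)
The plan is to work entirely at the level of the Littlewood--Paley blocks and to exploit the characterization $\|w\|_r\simeq\sup_{j}2^{jr}\|\Delta_j w\|_{L^\infty}$ of the H\"older norm. Thus for each of the two bilinear operators I would estimate $\|\Delta_j(\cdot)\|_{L^\infty}$, multiply by $2^{jr}$, and take the supremum over $j$. Throughout I would invoke two elementary facts: first, that the frequency cut-offs act uniformly on $L^\infty$, i.e. $\|\Delta_j w\|_{L^\infty}\lesssim\|w\|_{L^\infty}$ and $\|S_{q-1}w\|_{L^\infty}\lesssim\|w\|_{L^\infty}$ with constants independent of $j,q$ (a consequence of Young's inequality, since each is convolution against an $L^1$-normalized kernel); and second, the spectral localization of the summands, which dictates exactly which indices $q$ survive after applying $\Delta_j$. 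The contrast between these two localizations is precisely what separates the regimes $r<0$ and $r>0$.

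For the paraproduct $T_uv=\sum_q S_{q-1}u\,\Delta_q v$, the factor $S_{q-1}u$ is spectrally supported in a ball of radius $\sim 2^{q}$ while $\Delta_q v$ is supported in an annulus $|\xi|\sim 2^q$, so with the standard conventions each summand has Fourier support in an annulus $c_1 2^q\le|\xi|\le c_2 2^q$. Hence there is a fixed integer $N_0$ with $\Delta_j T_uv=\sum_{|q-j|\le N_0}\Delta_j\bigl(S_{q-1}u\,\Delta_q v\bigr)$, so only boundedly many terms contribute to each output frequency. Bounding each by $\|S_{q-1}u\|_{L^\infty}\|\Delta_q v\|_{L^\infty}\lesssim\|u\|_{L^\infty}\,2^{-qr}\|v\|_r$ and summing the finitely many near-diagonal terms (where $2^{-qr}\simeq 2^{-jr}$) gives $2^{jr}\|\Delta_j T_uv\|_{L^\infty}\lesssim\|u\|_{L^\infty}\|v\|_r$. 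The identical argument applied to $T_vu$ then produces the stated bound on $\|T_uv\|_r+\|T_vu\|_r$; I would remark that this step in fact never uses the sign of $r$, the finiteness of the index set making it harmless.

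The remainder $\mathcal{R}(u,v)=\sum_q \Delta_q u\,\widetilde{\Delta}_q v$ is where the sign of $r$ becomes decisive, and this is the main point of the argument. Now both factors are frequency-localized at scale $2^q$, so their product has Fourier support merely in a ball of radius $\sim 2^{q+2}$, with no lower frequency removed; consequently $\Delta_j$ collects every sufficiently high block, $\Delta_j\mathcal{R}(u,v)=\sum_{q\ge j-N_1}\Delta_j\bigl(\Delta_q u\,\widetilde{\Delta}_q v\bigr)$ for a fixed $N_1$. Estimating $\|\Delta_q u\,\widetilde{\Delta}_q v\|_{L^\infty}\le\|\Delta_q u\|_{L^\infty}\|\widetilde{\Delta}_q v\|_{L^\infty}\lesssim\|u\|_{L^\infty}\,2^{-qr}\|v\|_r$ leaves the tail sum $\sum_{q\ge j-N_1}2^{-qr}$, which converges to a multiple of $2^{-jr}$ \emph{precisely because} $r>0$ forces the geometric ratio $2^{-r}<1$. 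This yields $2^{jr}\|\Delta_j\mathcal{R}(u,v)\|_{L^\infty}\lesssim\|u\|_{L^\infty}\|v\|_r$ and hence the claim. The only delicate point, and the expected main obstacle, is exactly this convergence of the infinite diagonal sum: unlike the paraproduct, the remainder involves infinitely many blocks at each output frequency, and strict positivity of $r$ is what makes the tail summable.
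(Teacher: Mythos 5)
Your estimates for $T_uv$ and for the remainder $\mathcal{R}(u,v)$ are correct, and for $\mathcal{R}$ you rightly identify the convergence of the tail $\sum_{q\ge j-N_1}2^{-qr}$ as the place where $r>0$ is indispensable. (The paper itself does not prove this lemma; it simply cites Bahouri--Chemin--Danchin, and your block-by-block argument is the standard proof from that reference.) The genuine gap is in the half-sentence where you dispose of $T_vu$: you claim it follows by ``the identical argument'' and you add that the paraproduct step ``never uses the sign of $r$''. Both claims are wrong, and the second one should have been a red flag, since the lemma restricts the two-sided continuity of $T$ to $r<0$ precisely because of this term. In $T_vu=\sum_q S_{q-1}v\,\Delta_q u$ the roles are exchanged: the $C^r$ distribution $v$ now sits in the low-frequency slot and the $L^\infty$ function $u$ in the high-frequency slot. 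For $r<0$ an element of $C^r$ is in general not a bounded function at all, so the bound $\|S_{q-1}v\|_{L^\infty}\lesssim\|v\|_{L^\infty}$ that a literal transcription of your $T_uv$ argument would require is unavailable. What one must do instead is sum the blocks of $v$:
\[
\|S_{q-1}v\|_{L^\infty}\le\sum_{-1\le j\le q-2}\|\Delta_j v\|_{L^\infty}\le\|v\|_r\sum_{-1\le j\le q-2}2^{-jr}\le C(r)\,2^{-qr}\|v\|_r,
\]
and the last inequality --- the geometric series being comparable to its largest (topmost) term --- holds \emph{precisely because} $r<0$. With this bound each summand obeys $\|S_{q-1}v\,\Delta_q u\|_{L^\infty}\le C(r)2^{-qr}\|u\|_{L^\infty}\|v\|_r$, and your near-diagonal summation over $|q-j|\le N_0$ then closes the estimate for $\|T_vu\|_r$.

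Note also that this step genuinely fails when $r>0$: there one only gets $\|S_{q-1}v\|_{L^\infty}\lesssim C(r)\|v\|_r$ with no gain of $2^{-qr}$, so $2^{jr}\|\Delta_j T_vu\|_{L^\infty}$ is unbounded in $j$ and $T$ is not continuous from $C^r\times L^{\infty}$ into $C^r$. So the correct summary is the opposite of your remark: the $T_uv$ estimate is sign-insensitive, the $T_vu$ estimate needs $r<0$, and the $\mathcal{R}$ estimate needs $r>0$. Your proof is complete once the $T_vu$ paragraph is replaced by the computation above.
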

The following theorem treats the action of composition law with a smooth functions in the Besov spaces and it playes a significant role in the sequel. The proof can be found in \cite{Serge}.
\begin{Theorem}\label{thm2.8}
Let $G \in C^{[s]+2}$, with $G(0)=0$ and $s\in[0,\infty]$. Assume that $\theta \in B^{s}_{p,r} \cap L^{\infty} ,$ with $(p,r) \in [1,+\infty]^2$, then $G\circ \theta \in B^{s}_{p,r}$ and satisfying
$$\|G \circ \theta\|_{B^{s}_{p,r}} \leq C(s)\sup_{|y|\leq C\|\theta\|_{L^\infty}}\|G^{[s]+2}(y)\|_{L^\infty} \|\theta\|_{B^{s}_{p,r}}.$$
\end{Theorem}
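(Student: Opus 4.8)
The plan is to prove the estimate by a Littlewood--Paley telescoping (paralinearization) argument, reducing the nonlinear composition $G\circ\theta$ to a sum of products that can be controlled by a discrete convolution inequality. Since $S_q\theta\to\theta$ in $\mathcal{S}'$ and $G(0)=0$, I would first record the telescoping identity
\[
G(\theta)=\sum_{q\ge-1}\big(G(S_{q+1}\theta)-G(S_q\theta)\big),\qquad S_{-1}\theta:=0,
\]
and then, applying the mean value theorem to each increment together with $S_{q+1}\theta-S_q\theta=\Delta_q\theta$, rewrite this as
\[
G(\theta)=\sum_{q\ge-1}m_q\,\Delta_q\theta,\qquad m_q:=\int_0^1 G'\big(S_q\theta+\tau\,\Delta_q\theta\big)\,d\tau .
\]
This is where the hypotheses enter structurally: $G(0)=0$ is exactly what makes the series converge in $\mathcal{S}'$ to $G(\theta)$ (rather than to $G(\theta)-G(0)$), while $\theta\in L^\infty$ keeps the argument of $G'$ inside the fixed ball $\{|y|\le C\|\theta\|_{L^\infty}\}$ through Bernstein's inequality $\|S_q\theta\|_{L^\infty}+\|\Delta_q\theta\|_{L^\infty}\le C\|\theta\|_{L^\infty}$. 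It then suffices to bound $\big\|\sum_{q\ge-1}m_q\,\Delta_q\theta\big\|_{\mathrm{B}_{p,r}^{s}}$.

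The heart of the matter, and the step I expect to be the main obstacle, is a family of Bernstein-type bounds on the nonlinear multipliers $m_q$: for every integer $0\le k\le [s]+1$,
\[
\|m_q\|_{L^\infty}\le\sup_{|y|\le C\|\theta\|_{L^\infty}}|G'(y)|,\qquad \|\nabla^k m_q\|_{L^\infty}\le C\,2^{qk}\sup_{|y|\le C\|\theta\|_{L^\infty}}|G^{(k+1)}(y)| .
\]
I would prove these by the Fa\`a di Bruno formula: differentiating $m_q$ a total of $k$ times produces a finite sum of terms, each the product of a factor $G^{(\ell+1)}(S_q\theta+\tau\Delta_q\theta)$ with $\ell\le k$ and of $\ell$ derivatives (of total order $k$) of the band-limited functions $S_q\theta$ and $\Delta_q\theta$; each such derivative costs a factor $2^{q\cdot(\mathrm{its\ order})}$ by Bernstein, so the combined cost is $2^{qk}$, while every $G^{(\ell+1)}$ is evaluated only on $\{|y|\le C\|\theta\|_{L^\infty}\}$. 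Carrying the differentiation out to order $k=[s]+1$ is precisely what forces $G'\in C^{[s]+1}$, that is $G\in C^{[s]+2}$, and the highest derivative that appears is $G^{([s]+2)}$, which is why that quantity governs the final constant.

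With these bounds in hand I would estimate $\|\Delta_j(m_q\Delta_q\theta)\|_{L^p}$ by splitting the sum over $q$ into two regimes. When $q\ge j-N_0$ (the input frequency $2^q$ is comparable to or larger than $2^j$) I use the crude bound $\|\Delta_j(m_q\Delta_q\theta)\|_{L^p}\le\|m_q\|_{L^\infty}\|\Delta_q\theta\|_{L^p}$, giving the weight $2^{(j-q)s}$ after multiplication by $2^{js}$. When $q<j-N_0$, the output frequency $2^j$ can only originate from the nonlinear factor $m_q$, so I use $\|\Delta_j u\|_{L^p}\lesssim 2^{-jM}\|\nabla^M u\|_{L^p}$ with $M=[s]+1$ together with the Leibniz rule and the bounds above, yielding
\[
\|\Delta_j(m_q\Delta_q\theta)\|_{L^p}\lesssim 2^{-jM}\,2^{qM}\,\|\Delta_q\theta\|_{L^p}=2^{(q-j)M}\,\|\Delta_q\theta\|_{L^p}.
\]
In the first regime the weight $2^{(j-q)s}$ is summable over $j-q\le N_0$ precisely because $s>0$; in the second the weight $2^{-(j-q)(M-s)}$ is summable over $j-q>N_0$ because $M=[s]+1>s$. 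Hence, after multiplying by $2^{js}$ and summing in $q$, the sequence $\big(2^{js}\|\Delta_j(G\circ\theta)\|_{L^p}\big)_j$ is dominated by the convolution of $\big(2^{qs}\|\Delta_q\theta\|_{L^p}\big)_q\in\ell^r$ with a fixed $\ell^1(\mathbb{Z})$ sequence, and Young's inequality for series delivers
\[
\|G\circ\theta\|_{\mathrm{B}_{p,r}^{s}}\le C(s)\sup_{|y|\le C\|\theta\|_{L^\infty}}|G^{([s]+2)}(y)|\;\|\theta\|_{\mathrm{B}_{p,r}^{s}},
\]
the lower-order derivative contributions being absorbed into the same compact supremum. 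The only delicate point is the summability in the first regime, which is why the argument is run for $s>0$, the range covering the H\"older exponents $C^{\epsilon}$ and $C^{1+\epsilon}$ used later; the endpoint $s=0$ requires the usual minor modification.
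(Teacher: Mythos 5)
The paper offers no proof of this theorem at all — it is quoted with a pointer to Alinhac--G\'erard \cite{Serge} — and your proposal reconstructs precisely the classical argument from that reference: the telescoping $G(\theta)=\sum_{q\ge-1}\big(G(S_{q+1}\theta)-G(S_q\theta)\big)=\sum_q m_q\,\Delta_q\theta$ (with $G(0)=0$ fixing the limit of the partial sums), Bernstein/Fa\`a di Bruno bounds on $\nabla^k m_q$ for $k\le[s]+1$ (exactly where $G\in C^{[s]+2}$ and $\theta\in L^\infty$ enter), the two-regime frequency splitting with $\|\Delta_j u\|_{L^p}\lesssim 2^{-j([s]+1)}\|\nabla^{[s]+1}u\|_{L^p}$ when $q<j-N_0$, and Young's inequality in $\ell^r$, the conditions $s>0$ and $[s]+1>s$ supplying the two convergent geometric series — all of which is correct. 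One caveat: your displayed multiplier bound should read $\|\nabla^k m_q\|_{L^\infty}\lesssim 2^{qk}\sup_{\ell\le k,\ |y|\le C\|\theta\|_{L^\infty}}|G^{(\ell+1)}(y)|$ with a constant carrying powers of $\|\theta\|_{L^\infty}$, since lower-order derivatives of $G$ cannot be ``absorbed'' into the top one on a compact set — with only $G^{([s]+2)}$ present, the final estimate already fails for $G(y)=y$ — but this imprecision is inherited from the theorem's own loose wording, whose intended meaning (as in \cite{Serge}, and as the paper itself uses it later) is the $C^{[s]+2}$-norm of $G$ on the ball $\{|y|\le C\|\theta\|_{L^\infty}\}$.
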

The persistence of Besov regularity for transport equations which will be useful in several situations can be reads as follows. For the proof, we refer to \cite{Bahouri-Chemin-Danchin}.
\begin{Proposition}\label{lem3}
Let $s \in ]-1,1[$ and $v$ be a smooth divergence-free vector field. Let us consider a couple of functions $(a,f)\in L^{\infty}_{\loc}(\mathbb{R},C^s) \times L^{1}_{\loc}(\mathbb{R},C^s)$ and $a_0\in C^s$ such that
\begin{equation*}
\left\{ \begin{array}{ll}
\partial_{t}a+v\cdot\nabla a=g \\
a_{| t=0}={a}_0.
\end{array} \right.
\end{equation*}
Then for all $t\geq 0$, we have
\begin{equation}\label{eq8}
\|a(t)\|_s \lesssim e^{V(t)}\Big(\|a_0\|_s + \int_0^t \|g(\tau)\|_s d\tau\Big),
\end{equation}
where $V(t)=e^{C\int_0^t (\|\nabla v(\tau)\|_{L^{\infty}})d\tau}$ with $C$ being a constant depending only on $s$.
\end{Proposition}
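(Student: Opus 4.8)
The plan is to run the classical frequency-localized energy method for transport equations. First I would apply the dyadic block $\Delta_q$ to the equation $\partial_t a + v\cdot\nabla a = g$ and introduce the commutator
$$R_q := [\Delta_q, v\cdot\nabla]\,a = \Delta_q(v\cdot\nabla a) - v\cdot\nabla(\Delta_q a).$$
Setting $a_q := \Delta_q a$, a direct computation shows that $a_q$ solves the inhomogeneous transport equation
$$\partial_t a_q + v\cdot\nabla a_q = \Delta_q g - R_q.$$
Because $v$ is smooth and divergence-free, the transport structure gives the $L^\infty$ a priori bound (by the maximum-principle estimate for transport, justified after a standard regularization of $v$)
$$\|a_q(t)\|_{L^\infty} \leq \|\Delta_q a_0\|_{L^\infty} + \int_0^t \|\Delta_q g(\tau)\|_{L^\infty}\,d\tau + \int_0^t \|R_q(\tau)\|_{L^\infty}\,d\tau.$$

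The crucial ingredient — and the step I expect to be the main obstacle — is the commutator estimate: for $s\in{]-1,1[}$ one has
$$2^{qs}\,\|R_q(\tau)\|_{L^\infty} \leq C\,c_q(\tau)\,\|\nabla v(\tau)\|_{L^\infty}\,\|a(\tau)\|_{s}, \qquad \sup_q c_q(\tau)\lesssim 1.$$
This is precisely where the restriction $|s|<1$ is used in an essential way. To prove it I would expand $v\cdot\nabla a$ through Bony's decomposition into the three pieces $T_{v^k}\partial_k a$, $T_{\partial_k a}v^k$, and $\mathcal{R}(v^k,\partial_k a)$, compare each with its $\Delta_q$-localization, and estimate the resulting telescoping and remainder sums using the continuity properties of the paraproduct and remainder operators recorded in Lemma \ref{L}; the paraproduct term forces $s<1$ while the remainder term forces $s>-1$, so both endpoints are genuinely excluded.

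Finally I would multiply the integrated inequality by $2^{qs}$, insert the commutator bound, and take the supremum over $q\in\mathbb{Z}$ to obtain
$$\|a(t)\|_{s} \leq \|a_0\|_{s} + \int_0^t \|g(\tau)\|_{s}\,d\tau + C\int_0^t \|\nabla v(\tau)\|_{L^\infty}\,\|a(\tau)\|_{s}\,d\tau.$$
An application of Grönwall's lemma to the quantity $\|a(t)\|_s$ then produces the exponential amplification factor $\exp\!\big(C\int_0^t \|\nabla v(\tau)\|_{L^\infty}\,d\tau\big)$ and yields \eqref{eq8}. The remaining points — the regularization of $v$ needed to legitimize the $L^\infty$ transport estimate and the passage to the limit — are routine, so the entire weight of the argument rests on the paraproduct-based commutator bound above.
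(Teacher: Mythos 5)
Your proposal is correct and matches the paper's approach: the paper gives no proof of this proposition, deferring entirely to \cite{Bahouri-Chemin-Danchin}, and your argument --- dyadic localization $a_q=\Delta_q a$, the $L^\infty$ transport bound along characteristics, the commutator estimate $2^{qs}\|[\Delta_q,v\cdot\nabla]a\|_{L^\infty}\lesssim \|\nabla v\|_{L^\infty}\|a\|_s$ valid precisely for $s\in\,]-1,1[$, and Gr\"onwall --- is exactly the standard proof found in that reference. One minor remark: your conclusion $\|a(t)\|_s\lesssim e^{C\int_0^t\|\nabla v\|_{L^\infty}d\tau}\big(\|a_0\|_s+\int_0^t\|g(\tau)\|_s d\tau\big)$ is the correct (single-exponential) form; the double exponential appearing in the statement, where $V(t)$ is itself defined as an exponential, is a typo in the paper, which elsewhere uses $e^{CV(t)}$ with $V(t)=\int_0^t\|\nabla v(\tau)\|_{L^{\infty}}d\tau$.
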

\subsection{Regular patch tool box}
We give some definitions and notations concerning the admissible family of vector fields and the anisotropic H\"older space. These quantities constitute the main ingredients concerning the vortex patch problem.
\begin{Definition}\label{D1}
Let $\Sigma$ be a closed set of the plane and $\epsilon$ $\in$ (0,1). Let $X=(X_{\lambda})_{(\lambda \in \Lambda)}$ be a family of vector fields. We say that this family is admissible of class $C^{\epsilon}$ outside $\Sigma$ if and only if :

$\bullet$ Regularity: $X_{\lambda}$, div $X_{\lambda} \in C^{\epsilon}$.

$\bullet$ Non degeneracy:

$$ I(\Sigma,X) \triangleq \inf_{x\notin\Sigma} \sup_{\lambda\in\Lambda} |X_{\lambda}(x)| > 0 .$$
We set
$$ \check{\|} X_{\lambda} \|_{\epsilon}\triangleq \|X_{\lambda}\|_{\epsilon} +\|\Div\text{ }X_{\lambda}\|_{\epsilon-1} $$
and
$$ N_\epsilon(\Sigma,X)  \triangleq  \sup_{\lambda\in\Lambda}\frac{\check{\|} X_{\lambda} \|_{\epsilon}}{I(\Sigma,X)}.$$

The action of the family $X_{\lambda}$ on bounded real-valued functions $u$ in the weak sense as follows:
$$ \partial_{X_{\lambda}}u \triangleq \Div\text{}(uX_{\lambda})-u\Div\text{}X_{\lambda}.$$
Now, for all $t \in [0,T] $ the transported $X_t=(X_{t,\lambda})$ of an initial family $X_0=(X_{0,\lambda})$ by the flow $\Psi ,$ is defined by
\begin{equation}\label{Eq11}
X_{t,\lambda}(x) \triangleq \big(\partial_{X_{0,\lambda}}\Psi(t)\big)\big(\Psi^{-1}(t,x)\big).
\end{equation}
\end{Definition}
The next definition deals with the concept of anisotropic H\"older space, denoted by $C^{\epsilon+k}(\Sigma,X).$
\begin{Definition}\label{D2}
Let $0<\epsilon<1,k\in \mathbb{N}$ and $\Sigma$ be a closed set of the plane. Consider a family of vector fields $X=(X_\lambda)_{\lambda}$ as in Definition \ref{D1}. We say that $v\in C^{\epsilon+k}$ the space of functions $v\in W^{k,\infty}$ such that
$$\sum_{|\alpha|\leq k} \|\partial^{\alpha}v\|_{L^{\infty}} + \sup_{ \lambda \in \Lambda} \|\partial_{X_{\lambda}} v\|_{\epsilon+k-1} < \infty $$
and we set
$$ \|v\|^{\epsilon+k}_{\Sigma,X} \triangleq N_\epsilon(\Sigma,X) \sum_{|\alpha|\leq k} \|\partial^{\alpha}v\|_{L^{\infty}} + \sup_{ \lambda \in \Lambda}\frac{\|\partial_{X_{\lambda}} v\|_{\epsilon+k-1}}{I(\Sigma,X)}.$$
\end{Definition}
The result below is a direct consequence of Lemma \ref{L}, see for instance Corollary 3.1 \cite{Hassainia-Hmidi}.
\begin{Corollary}\label{Cor1}
Let $\epsilon \in ]0,1[,X$ be a vector field belonging to $C^\epsilon$ such that $\Div X$ belonging to $C^\epsilon$ too and $g$ be a Lipschitz scalar function. Then for $i\in \{1,2\}$ we have
$$\|(\partial_i X)\cdot\nabla g\|_{\epsilon-1} \lesssim \|\nabla g\|_{L^{\infty}}(\|\Div X\|_\epsilon + \|X\|_\epsilon).$$
\end{Corollary}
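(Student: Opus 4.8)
The plan is to expand the scalar $(\partial_i X)\cdot\nabla g=\sum_{j=1}^{2}(\partial_i X^j)(\partial_j g)$ and estimate each product in $C^{\epsilon-1}$ through Bony's decomposition, observing at the outset that $\partial_i X^j\in C^{\epsilon-1}$ (since $X^j\in C^\epsilon$ by hypothesis) while $\partial_j g\in L^\infty$ (since $g$ is Lipschitz). Writing
$$(\partial_i X^j)(\partial_j g)=T_{\partial_i X^j}(\partial_j g)+T_{\partial_j g}(\partial_i X^j)+\mathcal{R}(\partial_i X^j,\partial_j g),$$
I would first dispose of the two paraproducts. Because the target index $r=\epsilon-1$ is negative, they fall exactly under the first part of Lemma \ref{L}, giving
$$\|T_{\partial_i X^j}(\partial_j g)\|_{\epsilon-1}+\|T_{\partial_j g}(\partial_i X^j)\|_{\epsilon-1}\lesssim\|\partial_j g\|_{L^\infty}\|\partial_i X^j\|_{\epsilon-1}\lesssim\|\nabla g\|_{L^\infty}\|X\|_{\epsilon},$$
so the paraproducts already obey the desired bound and do not even call upon $\Div X$.

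The difficulty is concentrated in the remainder $\sum_j\mathcal{R}(\partial_i X^j,\partial_j g)$: its two factors have total regularity $(\epsilon-1)+0=\epsilon-1<0$, so the remainder part of Lemma \ref{L} (which requires a positive index) does not apply, and the dyadic series is not even summable from a crude block estimate. This is precisely where the divergence structure must enter. Writing $\mathcal{R}(\partial_i X^j,\partial_j g)=\sum_q(\Delta_q\partial_i X^j)(\widetilde{\Delta}_q\partial_j g)$ and transferring the derivatives onto the low-regularity factor block by block (an integration by parts at the level of dyadic pieces, first moving $\partial_i$ off $X^j$ and then $\partial_j$ off $g$), I would recast the summed remainder, after collecting the index $j$, as
$$\sum_j\mathcal{R}(\partial_i X^j,\partial_j g)=\partial_i\sum_j\mathcal{R}(X^j,\partial_j g)-\sum_j\partial_j\mathcal{R}(X^j,\partial_i g)+\mathcal{R}(\Div X,\partial_i g).$$
The gain of this rearrangement is twofold: every surviving remainder now pairs a factor of regularity $\epsilon>0$ (namely $X^j$ or $\Div X$) with an $L^\infty$ factor ($\partial_j g$ or $\partial_i g$), so the positive-index remainder estimate of Lemma \ref{L} applies and places each one in $C^\epsilon$ with norm $\lesssim(\|X\|_{\epsilon}+\|\Div X\|_{\epsilon})\|\nabla g\|_{L^\infty}$, and the loss of the single outer derivative then lands the term in $C^{\epsilon-1}$. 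It is exactly the second integration by parts that manufactures the $\mathcal{R}(\Div X,\partial_i g)$ contribution, which is what forces $\|\Div X\|_{\epsilon}$ onto the right-hand side.

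The main obstacle is therefore this remainder, together with the structural constraint that only $\|\nabla g\|_{L^\infty}$ — and never the full norm $\|g\|_{L^\infty}$, unavailable for a merely Lipschitz $g$ — be produced along the way. The derivative transfer above is designed precisely to keep a derivative on $g$ in every term that remains, so that the Lipschitz seminorm suffices, while the divergence of $X$ absorbs the mismatched derivative and repairs the non-summability caused by the negative total regularity. Summing the paraproduct and remainder bounds over $j\in\{1,2\}$ then yields the claimed estimate, confirming that the result is indeed a direct consequence of Lemma \ref{L}.
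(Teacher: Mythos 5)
Your proof is correct and takes essentially the same route as the paper's: the paper gives no written proof, stating only that the corollary is a direct consequence of Lemma \ref{L} and citing Corollary 3.1 of \cite{Hassainia-Hmidi}, and the argument behind that citation is exactly yours --- Bony's decomposition, the negative-index paraproduct bound of Lemma \ref{L} for the two paraproducts, and the block-wise derivative transfer on the remainder that generates the term $\mathcal{R}(\Div X,\partial_i g)$ and hence the $\|\Div X\|_\epsilon$ contribution. Your identification of the remainder as the sole obstruction (total regularity $\epsilon-1<0$) and of the divergence structure as the repair mechanism is precisely the content of the cited proof.
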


Next, we state the logarithmic estimate introduced by Chemin \cite{Chemin2}, it allows us to control the Lipschitz norm of the velocity with respect to the striated regularity.
\begin{Theorem}\label{thm3}
Let $a \in (1,\infty), \epsilon \in (0,1), \Sigma$ be a closed set of the plane and $X$ be a family of vector fields as in definition \ref{D1}. Consider $\omega \in C^{\epsilon} (\Sigma,X)  \cap L^{a}.$ Let $v$ be a divergence-free vector field with vorticity $\omega$, then there exists C such that
$$\|\nabla v(t)\|_{L^{\infty} (\Sigma)} \leq C(a,\epsilon) \bigg( \|\omega(t)\|_{L^{a}} + \|\omega(t)\|_{L^{\infty}}\Log\bigg(e+\frac{\|\omega(t)\|^{\epsilon}_{\Sigma,X}}{\|\omega(t)\|_{L^{\infty}}}\bigg)\bigg).$$
\end{Theorem}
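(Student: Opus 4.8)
The plan is to reproduce Chemin's original argument: control $\nabla v$ through a Littlewood--Paley decomposition, playing a crude bound against a striated one and optimizing the frequency threshold to produce the logarithm. First I would record that the Biot--Savart law $v=\nabla^{\perp}\Delta^{-1}\omega$ gives $\nabla v=\mathcal{K}\omega$, where $\mathcal{K}=\nabla\nabla^{\perp}\Delta^{-1}$ is a matrix of Calder\'on--Zygmund operators of order $0$. Fixing an integer $N\ge 0$ to be chosen later, I split
\[
\nabla v=\Delta_{-1}\nabla v+\sum_{0\le q<N}\Delta_q\nabla v+\sum_{q\ge N}\Delta_q\nabla v .
\]

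For the low block, Bernstein's inequality together with the $L^{a}$--continuity of $\mathcal{K}$ (valid since $1<a<\infty$) yields $\|\Delta_{-1}\nabla v\|_{L^{\infty}}\lesssim\|\Delta_{-1}\nabla v\|_{L^{a}}\lesssim\|\omega\|_{L^{a}}$. For the intermediate blocks I would use that $\Delta_q\mathcal{K}$ is bounded on $L^{\infty}$ uniformly in $q$, whence $\|\Delta_q\nabla v\|_{L^{\infty}}\lesssim\|\omega\|_{L^{\infty}}$ and so $\sum_{0\le q<N}\|\Delta_q\nabla v\|_{L^{\infty}}\lesssim N\|\omega\|_{L^{\infty}}$.

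The heart of the matter --- and the step I expect to be the main obstacle --- is the high-frequency striated estimate
\[
\|\Delta_q\nabla v\|_{L^{\infty}}\lesssim 2^{-q\epsilon}\,\|\omega\|^{\epsilon}_{\Sigma,X},\qquad q\ge 0 .
\]
The mechanism is that at frequency $2^{q}$ the directional derivative $\partial_{X_{\lambda}}$ is ``invertible'' up to a loss $2^{-q}$, while the non-degeneracy $I(\Sigma,X)>0$ of Definition \ref{D1} lets one select, at each point outside $\Sigma$, a component $X_{\lambda}$ bounded below and recover the whole matrix $\nabla v$ from $\omega$ and $\partial_{X_{\lambda}}v$: in two dimensions $\nabla v$ has zero trace and antisymmetric part fixed by $\omega$, so its symmetric part is determined by the single vector $\partial_{X_{\lambda}}v$ after dividing by $|X_{\lambda}|^{2}$. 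Feeding in the negative-index input $\partial_{X_{\lambda}}\omega\in C^{\epsilon-1}$, i.e. $\|\Delta_q\partial_{X_{\lambda}}\omega\|_{L^{\infty}}\lesssim 2^{q(1-\epsilon)}\|\partial_{X_{\lambda}}\omega\|_{\epsilon-1}$, and paying the frequency gain against the inversion loss produces the factor $2^{-q\epsilon}$. Rigorously this needs the commutator bounds for $[\Delta_q,\partial_{X_{\lambda}}]$ and $[\partial_{X_{\lambda}},\mathcal{K}]$, controlled by $\|X_{\lambda}\|_{\epsilon}+\|\Div X_{\lambda}\|_{\epsilon}$ through Lemma \ref{L} and Corollary \ref{Cor1}. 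The delicate point is that these commutators a priori reintroduce $\nabla v$ itself, so the bound must be closed by a self-improving argument in which the loss is always compensated by the frequency gain; all the $X$--dependent constants then reassemble into $N_{\epsilon}(\Sigma,X)$ and $1/I(\Sigma,X)$, that is, exactly the norm $\|\omega\|^{\epsilon}_{\Sigma,X}$ of Definition \ref{D2}.

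Finally I would combine the three contributions,
\[
\|\nabla v\|_{L^{\infty}(\Sigma)}\lesssim \|\omega\|_{L^{a}}+N\,\|\omega\|_{L^{\infty}}+2^{-N\epsilon}\,\|\omega\|^{\epsilon}_{\Sigma,X},
\]
and optimize in $N$. Choosing $N$ so that $2^{N\epsilon}\simeq \|\omega\|^{\epsilon}_{\Sigma,X}/\|\omega\|_{L^{\infty}}$, and taking $N=0$ when this ratio does not exceed $1$ (which is precisely why the $e+$ appears inside the logarithm), makes the last term comparable to $\|\omega\|_{L^{\infty}}$ while turning $N\|\omega\|_{L^{\infty}}$ into $\tfrac1\epsilon\|\omega\|_{L^{\infty}}\Log\!\big(e+\|\omega\|^{\epsilon}_{\Sigma,X}/\|\omega\|_{L^{\infty}}\big)$. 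This yields the asserted estimate with a constant $C=C(a,\epsilon)$.
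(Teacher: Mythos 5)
A preliminary remark is needed: the paper does not prove this statement at all --- Theorem \ref{thm3} is Chemin's logarithmic estimate, quoted from \cite{Chemin2} and used as a black box --- so your proposal can only be measured against Chemin's original argument. Your skeleton is indeed that argument: Biot--Savart plus Calder\'on--Zygmund theory for the block $\Delta_{-1}$ (giving $\|\omega\|_{L^{a}}$), uniform $L^{\infty}$ bounds for the $N$ intermediate blocks (giving $N\|\omega\|_{L^{\infty}}$), a striated gain $2^{-q\epsilon}$ at high frequencies, and optimization in $N$; the final arithmetic, including the role of the $e+$ inside the logarithm and the need to absorb the $\|\nabla v\|_{L^{\infty}}$ reintroduced by commutators, is correctly identified, as is the two-dimensional linear algebra (a trace-free matrix with prescribed antisymmetric part is recovered from its action on a single nonzero vector at the cost of $|X_{\lambda}|^{-2}$).

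The genuine gap is in the central claim, and it is not merely a matter of omitted detail: the high-frequency estimate $\|\Delta_q\nabla v\|_{L^{\infty}}\lesssim 2^{-q\epsilon}\|\omega\|^{\epsilon}_{\Sigma,X}$, asserted as a bound on all of $\RR^2$, is \emph{false} whenever $\Sigma\neq\emptyset$. Definition \ref{D1} requires non-degeneracy of the family only outside $\Sigma$: take a single smooth field $X$ equal to $\vec e_1$ outside $\Sigma$ but vanishing identically on a ball well inside $\Sigma$, and let $\omega$ be a rough bounded compactly supported function (say the indicator of a square) supported where $X$ vanishes. Then $\partial_X\omega\equiv 0$, so $\|\omega\|^{\epsilon}_{\Sigma,X}<\infty$, yet summing your claimed bound over $q\ge N$ together with your low/intermediate estimates would force $\nabla v\in L^{\infty}(\RR^2)$, which fails at the corners of the square. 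The correct statement is pointwise at $x\notin\Sigma$ (the norm $L^{\infty}(\Sigma)$ in the statement is to be read as a supremum over points away from $\Sigma$, which is how the paper later uses it on $(\Sigma_t)^{c}_{h}$), and proving that pointwise version is where essentially all of the work in \cite{Chemin2} lies: one picks $\lambda_0$ with $|X_{\lambda_0}(x)|\ge I(\Sigma,X)/2$, uses $X_{\lambda_0}\in C^{\epsilon}$ to keep $|X_{\lambda_0}|\gtrsim I(\Sigma,X)$ on a ball $B(x,r_0)$ of radius $r_0\sim\big(I(\Sigma,X)/\check{\|}X_{\lambda_0}\|_{\epsilon}\big)^{1/\epsilon}$, runs the algebraic recovery and the commutator estimates (your Lemma \ref{L}, Corollary \ref{Cor1} and inequality \eqref{x1} ingredients) only on that ball, and controls the contribution of $\RR^2\setminus B(x,r_0)$ by the rapid decay of the kernel of $\Delta_q\nabla\nabla^{\perp}\Delta^{-1}$, paying powers of $(2^{q}r_0)^{-1}$. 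This localization step, absent from your plan, is precisely the mechanism through which $N_{\epsilon}(\Sigma,X)$ --- hence $\|\omega\|^{\epsilon}_{\Sigma,X}$ --- enters the logarithm, and it is what lets the same threshold $2^{N\epsilon}\simeq\|\omega\|^{\epsilon}_{\Sigma,X}/\|\omega\|_{L^{\infty}}$ tame simultaneously the far-field error, the striated term, and the absorbed $\|\nabla v\|_{L^{\infty}}$ term. Your global formulation would suffice for the regular-patch case ($\Sigma=\emptyset$), but the theorem is stated, and is needed in the paper (Theorem \ref{thm2}, Proposition \ref{prop7}), for nonempty $\Sigma$. A secondary point: the gain does not come from ``inverting'' $\partial_{X_{\lambda}}$ at frequency $2^{q}$ --- the symbol $iX_{\lambda}\cdot\xi$ is not elliptic, as it vanishes on directions conormal to $X_{\lambda}$ --- but, as your own bookkeeping later indicates, from the order $-1$ smoothing of Biot--Savart applied to $\partial_{X_{\lambda}}\omega\in C^{\epsilon-1}$ combined with the pointwise linear algebra.
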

A smooth bounded domain in the plane is arounded by a closed curve which is characterized by certain geometric properties given by the following definition.  
\begin{defi}\label{Chart} Let $\EE>0$. A closed curve $\Gamma$ is said to be of class $C^{1+\EE}$, if there exists $f\in C^{1+\EE}(\RR^2)$ such that $\Sigma$ is  locally a zero set of $f$, that is there exists a neighborhood $V$ of $\Sigma$ such that
\begin{equation}\label{Reg-Om}
\Sigma=f^{-1}(\{0\})\cap V,\quad \nabla f(x)\ne 0\quad\forall x\in V.
\end{equation}   
\end{defi}  
\subsection{A priori estimates for the vorticity and density}
This subsection concerns the classical $L^p$ estimates for the vorticity and the density taking into account the effect of the buoyancy term  $G(\theta)$ which be helpful ingredients in the sequel. 
\begin{Proposition}\label{prop1}
Let $p \in [1,\infty]$ and $t \leq T$ and assume that $(v,\theta)$ is a smooth solution of the system (\ref{Eq1}) defined on the interval $[0,T]$ and $G \in C^{1}.$ Then
\begin{equation}\label{Eq13}
\|\omega(t)\|_{L^{p}} \lesssim \|\omega_{0}\|_{L^{p}} + \|\nabla \theta_{0}\|_{L^{p}}e^{CV(t)}t,
\end{equation}
\begin{equation}\label{Eq12}
\|\nabla \theta(t)\|_{L^{p}} \lesssim \|\nabla \theta_{0}\|_{L^{p}}e^{CV(t)},
\end{equation}
where $$V(t)= \int_0^t \|\nabla v(\tau)\|_{L^{\infty}}d\tau.$$
\begin{proof}
Let $j={1,2}$, applying the operator $\partial_j$ to $\theta-$equation of \eqref{Eq1}, so we have
$$ \partial_{t}\partial_{j} \theta + v\cdot\nabla\partial_{j} \theta =-(\partial_{j}v)\cdot\nabla\theta.$$
Taking the $L^{p}-$norm, using H\"older inequality and $\Div v=0$ we get for every $p \in [1,\infty ]$
$$\| \partial_{j} \theta(t)\|_{L^{p}} \leq \|\partial_{j} \theta_{0}\|_{L^{p}} + \int_0^t \|\nabla \theta(\tau)\|_{L^{p}}\|\nabla v(\tau)\|_{L^{\infty}}d\tau.$$
Next, Gronwall's estimate leading to
$$\|\nabla \theta(t)\|_{L^{p}} \lesssim \|\nabla \theta_{0}\|_{L^{p}}e^{CV(t)}.$$
For the second estimate we use the vorticity-density equation in \eqref{Eqv-d}
$$\partial_{t}\omega+v\cdot\nabla \omega=G'_2(\theta)(\partial_1\theta)-G'_1(\theta)(\partial_2\theta).$$
Again $L^{p}$ estimate gives
\begin{equation}\label{omega-bounded}
\|\omega(t)\|_{L^{p}} \leq \|\omega_{0}\|_{L^{p}}+ \int_0^t(\sum_{i=1}^2\|G'_{i}(\theta)\|_{L^{\infty}}) \|\nabla \theta(\tau)\|_{L^{p}}d\tau.
\end{equation}

Since $G$ is a $C^{1}$ function, so that

$$\|G'_{i}(\theta)\|_{L^{\infty}} \leq  \sup_{|x|\leq \|\theta(t)\|_{L^{\infty}}}|G'_{i}(x)|.$$

On the other hand, by the maximum principle we deduce $\|\theta(t)\|_{L^{\infty}} = \|\theta_{0}\|_{L^{\infty}}.$ Consequently we find out
\begin{eqnarray}\label{G'-bounded}
 \|G'_{i}(\theta)\|_{L^{\infty}} &\leq & \sup_{|x|\leq \|\theta_{0}\|_{L^{\infty}}}\|\nabla G_{i}(x)\|\\
 \nonumber &\leq & C.
\end{eqnarray}
Grouping \eqref{omega-bounded} and \eqref{G'-bounded}, we finally get
$$\|\omega(t)\|_{L^{p}} \lesssim \|\omega_{0}\|_{L^{p}} + \|\nabla \theta_{0}\|_{L^{p}}e^{CV(t)}t.$$
\end{proof}
\end{Proposition}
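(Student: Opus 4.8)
The plan is to obtain both bounds from elementary $L^p$ energy estimates for transport and transport-type equations, exploiting throughout that $v$ is divergence-free so that the convective operator $\partial_t+v\cdot\nabla$ preserves every $L^p$ norm. Since the source term driving the vorticity involves $\nabla\theta$, I would first establish the density estimate \eqref{Eq12} and then feed it into \eqref{Eq13}.

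For the density gradient I would differentiate the transport equation $\partial_t\theta+v\cdot\nabla\theta=0$ in the direction $\partial_j$, $j\in\{1,2\}$, which produces
$$\partial_t\partial_j\theta+v\cdot\nabla\partial_j\theta=-(\partial_j v)\cdot\nabla\theta.$$
Testing against $|\partial_j\theta|^{p-2}\partial_j\theta$ (for $p=\infty$ one instead argues along the characteristics of the flow), the convective term drops out thanks to $\Div v=0$, leaving, after an application of H\"older's inequality,
$$\|\partial_j\theta(t)\|_{L^p}\le\|\partial_j\theta_0\|_{L^p}+\int_0^t\|\nabla v(\tau)\|_{L^\infty}\|\nabla\theta(\tau)\|_{L^p}\,d\tau.$$
Summing over $j$ and applying Gr\"onwall's lemma yields \eqref{Eq12}.

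For the vorticity I would start from the vorticity--density formulation \eqref{Eqv-d}, whose source term is $G_2'(\theta)\partial_1\theta-G_1'(\theta)\partial_2\theta$. The same $L^p$ estimate gives
$$\|\omega(t)\|_{L^p}\le\|\omega_0\|_{L^p}+\int_0^t\Big(\sum_{i=1}^2\|G_i'(\theta(\tau))\|_{L^\infty}\Big)\|\nabla\theta(\tau)\|_{L^p}\,d\tau.$$
The only genuinely system-specific point --- and the step I would treat most carefully --- is the uniform control of the composition $\|G_i'(\theta)\|_{L^\infty}$. Because $\theta$ is transported by the incompressible flow, the maximum principle gives $\|\theta(t)\|_{L^\infty}=\|\theta_0\|_{L^\infty}$ for all $t$, so $G_i'(\theta(t))$ takes its values in the compact set $\{G_i'(x):|x|\le\|\theta_0\|_{L^\infty}\}$ and hence $\|G_i'(\theta(t))\|_{L^\infty}\le C$, with $C$ depending only on $G$ and $\|\theta_0\|_{L^\infty}$ (this is precisely where the hypothesis $G\in C^1$ enters). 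Substituting this bound together with \eqref{Eq12} into the integral above and integrating in time produces \eqref{Eq13}.

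In summary, the argument is routine once one recognizes that the nonlinearity $G$ enters only through the factor $\|G_i'(\theta)\|_{L^\infty}$, which the $L^\infty$-conservation of $\theta$ renders harmless; I do not anticipate a real analytic obstacle, the whole content lying in the bookkeeping of the source term and in the order in which the two estimates are chained.
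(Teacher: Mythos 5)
Your proposal is correct and follows essentially the same route as the paper's own proof: differentiate the $\theta$-equation, use $\Div v=0$ and Gr\"onwall to get \eqref{Eq12}, then bound $\|G_i'(\theta)\|_{L^\infty}$ via the maximum principle $\|\theta(t)\|_{L^\infty}=\|\theta_0\|_{L^\infty}$ and chain \eqref{Eq12} into the $L^p$ estimate for the vorticity-density equation to obtain \eqref{Eq13}. The only difference is cosmetic: you spell out the duality pairing with $|\partial_j\theta|^{p-2}\partial_j\theta$ and the characteristics argument for $p=\infty$, which the paper leaves implicit.
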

\subsection{A priori estimates for the striated regularity of the vorticity}
In this part we study some nice properties of the family $(X_\lambda)=(X_{t,\lambda})_{\lambda\in\Lambda}$ often constructed in \cite{Chemin2} and stated the striated regularity of the vorticity.
\begin{Lemma}\label{cor2}
There exists a constant $C$ such that for any smooth solution $(v,\theta)$ of \eqref{Eq1} on $[0,T]$ and any time dependent family of vector field $X_t$ transported by the flow of $v$, we have for all $t\in [0,T]$,
\begin{eqnarray}\label{Eq15}
I(X_0) \leq I(X_t)e^{CV(t)},\\
\label{Eq16}\|\Div X_{t,\lambda}\|_\epsilon \leq \|\Div X_{0,\lambda}\|_\epsilon e^{CV(t)},
\end{eqnarray}

\begin{eqnarray}\label{Eq17}
\check{\|}X_{t}\|_{\epsilon} + \|\partial_{X_{t,\lambda}}\omega\|_{\epsilon-1} &\leq & C\big(\check{\|}X_{0,\lambda}\|_{\epsilon}+ \|\partial_{X_{0,\lambda}}\omega_{0}\|_{\epsilon-1}+\|\theta_0\|_{\epsilon}\|\partial_{X_{0,\lambda}}\theta_0\|_{\epsilon}\big) e^{Ct} e^{CV(t)}\nonumber  \\
&\times& e^{t\|\nabla \theta_0\|_{L^\infty} e^{CV(t)}}.
\end{eqnarray}
\end{Lemma}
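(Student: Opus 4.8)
For \eqref{Eq15} I would use the push-forward formula \eqref{Eq11}: setting $y=\Psi^{-1}(t,x)$ gives $X_{t,\lambda}(x)=\nabla\Psi(t,y)\,X_{0,\lambda}(y)$, hence $X_{0,\lambda}(y)=(\nabla\Psi(t,y))^{-1}X_{t,\lambda}(x)$. Since the flow of the divergence-free Lipschitz field $v$ satisfies $\|\nabla\Psi^{\pm1}(t)\|_{L^\infty}\le e^{V(t)}$ by Gronwall, with $V(t)=\int_0^t\|\nabla v(\tau)\|_{L^\infty}d\tau$, taking $\sup_\lambda$ and then $\inf_x$ (the map $x\mapsto y$ being a bijection) yields $I(X_0)\le e^{V(t)}I(X_t)$. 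For \eqref{Eq16} I would first observe that the divergence is purely transported: taking $\Div$ of $(\partial_t+v\cdot\nabla)X_{t,\lambda}=\partial_{X_{t,\lambda}}v$ and using $\Div v=0$, the two bilinear terms $(\partial_k v_j)(\partial_j X_k)$ and $(\partial_k X_j)(\partial_j v_k)$ coincide and cancel, leaving $(\partial_t+v\cdot\nabla)\Div X_{t,\lambda}=0$; Proposition \ref{lem3} with $s=\epsilon$ and vanishing source then gives \eqref{Eq16}.

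For the main estimate \eqref{Eq17} I would run a Gronwall argument on $\Phi_\lambda(t):=\|X_{t,\lambda}\|_\epsilon+\|\partial_{X_{t,\lambda}}\omega\|_{\epsilon-1}$, the divergence part $\|\Div X_{t,\lambda}\|_{\epsilon-1}\le\|\Div X_{t,\lambda}\|_\epsilon$ being already controlled by \eqref{Eq16}. Two transport equations drive the analysis. First, $X_{t,\lambda}$ solves $(\partial_t+v\cdot\nabla)X_{t,\lambda}=\partial_{X_{t,\lambda}}v$, so Proposition \ref{lem3} gives $\|X_{t,\lambda}\|_\epsilon\lesssim e^{V(t)}\big(\|X_{0,\lambda}\|_\epsilon+\int_0^t\|\partial_{X_{\tau,\lambda}}v\|_\epsilon\,d\tau\big)$. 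Second, since $\partial_{X_{t,\lambda}}$ commutes with $\partial_t+v\cdot\nabla$ when $X_t$ is transported, applying it to the $\omega$-equation of \eqref{Eqv-d} yields $(\partial_t+v\cdot\nabla)\partial_{X_{t,\lambda}}\omega=\partial_{X_{t,\lambda}}R$ with $R=\partial_1 G_2(\theta)-\partial_2 G_1(\theta)$, whence $\|\partial_{X_{t,\lambda}}\omega\|_{\epsilon-1}\lesssim e^{V(t)}\big(\|\partial_{X_{0,\lambda}}\omega_0\|_{\epsilon-1}+\int_0^t\|\partial_{X_{\tau,\lambda}}R\|_{\epsilon-1}\,d\tau\big)$.

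The two source terms are handled as follows. For $\|\partial_{X_\lambda}v\|_\epsilon$ I would invoke the standard Biot--Savart/commutator estimate of Chemin's theory (as used in \cite{Hassainia-Hmidi}), $\|\partial_{X_\lambda}v\|_\epsilon\lesssim(\|\omega\|_{L^a}+\|\omega\|_{L^\infty})\check{\|}X_\lambda\|_\epsilon+\|\partial_{X_\lambda}\omega\|_{\epsilon-1}$. For $\partial_{X_\lambda}R$ I would use the commutator splitting already recorded in the introduction, $\partial_{X_\lambda}R=\partial_1(\partial_{X_\lambda}G_2(\theta))-\partial_2(\partial_{X_\lambda}G_1(\theta))+[\partial_{X_\lambda},\partial_1]G_2(\theta)-[\partial_{X_\lambda},\partial_2]G_1(\theta)$; since $\partial_j$ maps $C^\epsilon$ into $C^{\epsilon-1}$ the first two terms contribute $\lesssim\sum_i\|\partial_{X_\lambda}G_i(\theta)\|_\epsilon$, while Corollary \ref{Cor1} bounds the commutators by $\|\nabla G_i(\theta)\|_{L^\infty}\check{\|}X_\lambda\|_\epsilon\lesssim\|\nabla\theta\|_{L^\infty}\check{\|}X_\lambda\|_\epsilon$, using that $G'_i$ is bounded (Proposition \ref{prop1}). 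Crucially, $G_i(\theta)$ is itself transported by the flow, so $\partial_{X_{t,\lambda}}G_i(\theta)$ solves a homogeneous transport equation; Proposition \ref{lem3}, together with the identity $\partial_{X_{0,\lambda}}G_i(\theta_0)=G'_i(\theta_0)\partial_{X_{0,\lambda}}\theta_0$ and the composition estimate Theorem \ref{thm2.8}, gives $\|\partial_{X_{t,\lambda}}G_i(\theta)\|_\epsilon\lesssim e^{V(t)}\|\theta_0\|_\epsilon\|\partial_{X_{0,\lambda}}\theta_0\|_\epsilon$, exactly the density term appearing in \eqref{Eq17}.

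Finally I would close the estimate. Inserting the three preceding bounds, $\Phi_\lambda$ satisfies an integral inequality whose integrand is $\lesssim(\|\omega\|_{L^a}+\|\omega\|_{L^\infty}+\|\nabla\theta\|_{L^\infty})\Phi_\lambda(\tau)$ plus a forcing of size $e^{V(\tau)}\|\theta_0\|_\epsilon\|\partial_{X_{0,\lambda}}\theta_0\|_\epsilon$; by Proposition \ref{prop1} and \eqref{Eq12} the coefficient multiplying $\Phi_\lambda$ is $\lesssim 1+\|\nabla\theta_0\|_{L^\infty}e^{CV(\tau)}$. Gronwall's lemma then produces precisely the three exponential factors $e^{Ct}$ (from the $O(1)$ vorticity contribution), $e^{CV(t)}$ (from the transport prefactors), and $e^{t\|\nabla\theta_0\|_{L^\infty}e^{CV(t)}}$ (from the density-gradient coefficient), which is the claimed bound. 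The main obstacle is exactly this bookkeeping: each transport step carries an $e^{V}$ prefactor outside the time integral while the sources themselves grow like $e^{V(\tau)}$ inside it, so one must normalize carefully (for instance by estimating $e^{-V(t)}\Phi_\lambda(t)$) to make the Gronwall constants combine into the stated product rather than an uncontrolled iterated exponential; the other substantive input is the Biot--Savart estimate for $\|\partial_{X_\lambda}v\|_\epsilon$ with the correct linear dependence on $\check{\|}X_\lambda\|_\epsilon$.
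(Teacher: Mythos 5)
Your proofs of \eqref{Eq15} and \eqref{Eq16} are correct and coincide with the paper's (Gronwall along the flow for the non-degeneracy bound; $\Div X_{t,\lambda}$ is purely transported, then Proposition \ref{lem3}), and for \eqref{Eq17} your overall scheme --- transport estimate for $X_{t,\lambda}$, commutation of $\partial_{X_{t,\lambda}}$ with $\partial_t+v\cdot\nabla$, the splitting $\partial_{X}\partial_j G_i(\theta)=\partial_j\big(\partial_X G_i(\theta)\big)+\big[\partial_X,\partial_j\big]G_i(\theta)$ handled by Corollary \ref{Cor1}, the observation that $G_i(\theta)$ is transported so that $\|\partial_{X_{t,\lambda}}G_i(\theta)\|_\epsilon\lesssim e^{CV(t)}\|\theta_0\|_\epsilon\|\partial_{X_{0,\lambda}}\theta_0\|_\epsilon$ via $\partial_{X_{0,\lambda}}G_i(\theta_0)=G_i'(\theta_0)\partial_{X_{0,\lambda}}\theta_0$ and Theorem \ref{thm2.8}, and a normalized Gronwall --- is exactly the paper's argument.

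There is, however, one genuine gap: the static estimate you invoke for $\|\partial_X v\|_\epsilon$ is not the one available in the literature. You use
\begin{equation*}
\|\partial_X v\|_\epsilon\lesssim\big(\|\omega\|_{L^a}+\|\omega\|_{L^\infty}\big)\check{\|}X\|_\epsilon+\|\partial_X\omega\|_{\epsilon-1},
\end{equation*}
whereas Chemin's lemma (Lemma 3.3.2 of \cite{Chemin2}, the form used in \cite{Hassainia-Hmidi} and quoted as \eqref{x1} in this paper) reads
\begin{equation*}
\|\partial_X v\|_\epsilon\lesssim\|\nabla v\|_{L^\infty}\,\check{\|}X\|_\epsilon+\|\partial_X\omega\|_{\epsilon-1},
\end{equation*}
and the coefficient $\|\nabla v\|_{L^\infty}$ cannot be replaced by $\|\omega\|_{L^a\cap L^\infty}$. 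Indeed, writing $\partial_Xv=\nabla^\perp\Delta^{-1}\partial_X\omega+\sum_j\big[X^j,\nabla^\perp\Delta^{-1}\partial_j\big]\omega$ and expanding the commutator by Bony's decomposition, one meets the paraproduct $T_{\partial_j v}X^j$, whose $C^\epsilon$ bound genuinely requires $\nabla v\in L^\infty$: from $\omega\in L^a\cap L^\infty$ alone one only gets $\nabla v\in B^0_{\infty,\infty}$ (zero-order Calder\'on--Zygmund operators do not map $L^\infty$ into $L^\infty$), and a paraproduct whose low-frequency factor is merely $B^0_{\infty,\infty}$ loses a logarithm. Your inequality would be a strict strengthening of Chemin's lemma, decoupling the striated estimate from $\|\nabla v\|_{L^\infty}$ altogether; it is precisely because no such bound holds that the theory is organized as it is, with $e^{CV(t)}$ appearing in \eqref{Eq17} and the logarithmic estimate of Theorem \ref{thm3} needed afterwards to close $V(t)$. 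The repair costs nothing: with the correct coefficient, the extra term $\int_0^t\|\nabla v(\tau)\|_{L^\infty}\check{\|}X_{\tau,\lambda}\|_\epsilon\,d\tau$ enters your Gronwall inequality, and your own proposed normalization $e^{-CV(t)}\big(\check{\|}X_{t,\lambda}\|_\epsilon+\|\partial_{X_{t,\lambda}}\omega\|_{\epsilon-1}\big)$ (the paper's $\mathfrak{A}(t)$) absorbs it into the factor $e^{CV(t)}$; combined with $\|\nabla\theta(\tau)\|_{L^\infty}\lesssim\|\nabla\theta_0\|_{L^\infty}e^{CV(\tau)}$ from \eqref{Eq12}, Gronwall then yields exactly the product $e^{Ct}e^{CV(t)}e^{t\|\nabla\theta_0\|_{L^\infty}e^{CV(t)}}$ claimed in \eqref{Eq17}.
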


\begin{proof}
Applying the partial derivative $\partial_t$ to the quantity $\partial_{X_{0,\lambda}} \Psi(t,x)$ to obtain
\begin{equation}\label{eq2.9}
\left\{ \begin{array}{ll}
\partial_t\partial_{X_{0,\lambda}} \Psi(t,x)=\nabla v(t,\Psi(t,x))\partial_{X_{0,\lambda}} \Psi(t,x)\\
\partial_{X_{0,\lambda}} \Psi(0,x)=X_{0,\lambda}.
\end{array} \right.
\end{equation}
Combining Gronwall's inequality with the time inversibility of \eqref{eq2.9}, it holds
$$|X_{0,\lambda}(x)| \leq |\partial_{X_{0,\lambda}} \Psi(t,x)|e^{CV(t)}.$$
Grouping the Definition \ref{D2} and \eqref{Eq11} we get \eqref{Eq15}.

\hspace{0.5cm}For \eqref{Eq16}, using the fact that $X_{t,\lambda}$ satisfies the following equation
\begin{equation}\label{Eq19}
\partial_t X_{t,\lambda} + v \cdot \nabla X_{t,\lambda} = \partial_{X_{t,\lambda}} v.
\end{equation}
Apply the operator $\Div$ on \eqref{Eq19} to find
$$\Div(\partial_t X_{t,\lambda} + v \cdot \nabla X_{t,\lambda}) = \Div( \partial_{X_{t,\lambda}} v).$$
So $\Div v =0$ implies
$$(\partial_t + v\cdot \nabla)\Div X_{t,\lambda}=0.$$
At this stage the estimate (\ref{Eq16}) directly follows from Proposition \ref{lem3}.

\hspace{0.5cm}To estimate $X_t$ in $C^\epsilon$, again Proposition \ref{lem3} to \eqref{Eq19},  it happens
\begin{equation}\label{x}
\|X_{t,\lambda}\|_\epsilon \leq e^{CV(t)} \big( \|X_{0,\lambda}\|_\epsilon + C\int_0^t \|\partial_{X_{t,\lambda}} v(\tau)\|_\epsilon e^{-CV(\tau)}d\tau \big).
\end{equation}
Using the following inequality where its proof can be found in [Lemma 3.3.2 in \cite{Chemin2}],
\begin{equation}\label{x1}
\|\partial_{X_{t,\lambda}} v(t)\|_\epsilon \lesssim \|\nabla v(t)\|_{L^{\infty}}\check{\|}X_{t,\lambda}\|_\epsilon + \|\partial_{X_{t,\lambda}}\omega(t)\|_{\epsilon-1}.
\end{equation}
Combined \eqref{x} with \eqref{x1} to write
\begin{equation}\label{Eq20}
\check{\|}X_{t,\lambda}\|_\epsilon \leq e^{CV(t)}\bigg(\check{\|}X_{0,\lambda}\|_\epsilon + C\int_0^t e^{-CV(\tau)}\big(\|\nabla v(\tau)\|_{L^{\infty}}\check{\|}X_{\tau,\lambda}\|_\epsilon + \|\partial_{X_{\tau,\lambda}}\omega(t)\|_{\epsilon-1}\big)d\tau\bigg).
\end{equation}
\hspace{0.5cm}Now, let us move to bound the term $\|\partial_{X_{t,\lambda}}\omega\|_{\epsilon-1}$. For this purpose we make use the commutation between the operator $\partial_{X_{t,\lambda}}$ with transport operator, so that
$$ (\partial_{t}+v\cdot\nabla)\partial_{X_{t,\lambda}}\omega = \partial_{X_{t,\lambda}}\big(\partial_{1}\big(G_{2}(\theta)\big)-\partial_{2}\big(G_{1}(\theta)\big)\big).$$
Thus, Proposition \ref{lem3} provides
$$\|\partial_{X_{t,\lambda}}\omega\|_{\epsilon-1} \leq e^{CV(t)}\bigg(\|\partial_{X_{0,\lambda}}\omega_{0}\|_{\epsilon-1} + C\int_0^t e^{-CV(\tau)}\|\partial_{X_{t,\lambda}}\big(\partial_{1}\big(G_{2}(\theta)\big)-\partial_{2}\big(G_{1}(\theta)\big)\big)\|_{\epsilon-1}d\tau\bigg).$$
It follows that
 \begin{eqnarray}\label{eqinto}
\|\partial_{X_{t,\lambda}}\omega\|_{\epsilon-1} \leq e^{CV(t)}(\|\partial_{X_{0,\lambda}}\omega_{0}\|_{\epsilon-1} &+& C\int_0^t\|\partial_{X_{t,\lambda}}\big(\partial_{1}(G_{2}(\theta))\big)\|_{\epsilon-1}e^{-CV(\tau)}d\tau\\ &+& C\int_0^t\|\partial_{X_{t,\lambda}}\big(\partial_{2}(G_{1}(\theta))\big)\|_{\epsilon-1}e^{-CV(\tau)}d\tau).\nonumber
 \end{eqnarray}
To control the term $\|\partial_{X_{t,\lambda}}(\partial_{1}(G_{2}(\theta))\|_{\epsilon-1}$  we use the fact that
$$\partial_{X_{t,\lambda}}(\partial_{1}(G_{2}(\theta))= \partial_1\big(\partial_{X_{t,\lambda}}G_2(\theta)\big)+\big[\partial_{X_{t,\lambda}},\partial_1\big]G_2(\theta). $$
In such a way we have
\begin{equation*}
\|\partial_{X_{t,\lambda}}(\partial_{1}(G_{2}(\theta))\|_{\epsilon-1}  \leq \|\partial_{1}(\partial_{X_{t,\lambda}}(G_{2}(\theta))\|_{\epsilon-1} + \|\big[\partial_{X_{t,\lambda}},\partial_1\big]G_2(\theta)\|_{\epsilon-1}.
\end{equation*}
On the other hand we can see that
\begin{equation}\label{com:1}
\big[\partial_X,\partial_1\big]G_2(\theta)=-\big(\partial_1 X_{t,\lambda}\big)\cdot\nabla G_2(\theta).
\end{equation}
Accordingly finds out
$$\|\partial_{X_{t,\lambda}}(\partial_{1}(G_{2}(\theta))\|_{\epsilon-1} \leq \|\partial_{X_{t,\lambda}}(G_{2}(\theta))\|_{\epsilon} + \|\partial_{1}(X_{t,\lambda})\cdot\nabla(G_{2}(\theta))\|_{\epsilon-1}.$$

In view of Corollary \ref{Cor1} and the fact $G\circ\theta \in L^\infty$ we get
\begin{equation}\label{Eq23}
\|\partial_{X_{t,\lambda}}(\partial_{1}(G_{2}(\theta))\|_{\epsilon-1} \lesssim
\|\partial_{X_{t,\lambda}}(G_{2}(\theta))\|_{\epsilon} + \|\nabla\theta(\tau)\|_{L^{\infty}}\check{\|}X_{t,\lambda}\|_{\epsilon}.
\end{equation}
To control $\|\partial_{X_{t,\lambda}}(G_{2}(\theta))\|_{\epsilon}$, we employ that
$\theta$ is transported by the flow, so that $G_{i}(\theta(t,x))$ is also. Indeed, for $i\in\lbrace 1,2 \rbrace$ we have
$$\partial_t G_{i}(\theta(t,x))+ v\cdot\nabla G_{i}(\theta(t,x)) = G'_{i}(\theta(t,x))(\partial_t \theta + v\cdot\nabla \theta)=0.$$

Exploit the commutation between $\partial_{X_{t,\lambda}}$ with transport operator $(\partial_t+v\cdot\nabla)$ and applying Proposition \ref{lem3} we get
$$\|\partial_{X_{t,\lambda}}(G_{i}(\theta))\|_{\epsilon}\leq \|\partial_{X_{0,\lambda}}(G_{i}(\theta_0))\|_{\epsilon}e^{CV(t)}.$$

On the other hand, we can easily check that $\partial_{X_{0,\lambda}}(G_i(\theta_0))= G'_i(\theta_0)\partial_{X_{0,\lambda}}\theta_0$, so the fact that $C^\epsilon$ is an algebra ensures us to write
\begin{eqnarray}\label{Eqalg}
\|\partial_{X_{0,\lambda}}(G_{i}(\theta_0))\|_{\epsilon} &\leq & \|G'_i(\theta_0)\|_{\epsilon}\|\partial_{X_{0,\lambda}}\theta_0\|_{\epsilon}\\ \nonumber
&\lesssim & \|\theta_0\|_{\epsilon}\|\partial_{X_{0,\lambda}}\theta_0\|_{\epsilon}.
\end{eqnarray}
We have used in the last inequality Theorem \ref{thm2.8}. Plug \eqref{Eqalg} into \eqref{Eq23} to conclude that

\begin{equation}\label{eqq}
\|\partial_{X_{t,\lambda}}(\partial_{1}(G_{i}(\theta))\|_{\epsilon-1} \lesssim \|\theta_0\|_{\epsilon}\|\partial_{X_{0,\lambda}}\theta_0\|_{\epsilon}e^{CV(t)} + \|\nabla\theta(\tau)\|_{L^{\infty}}\check{\|}X_{t,\lambda}\|_{\epsilon}.
\end{equation}
Inserting \eqref{eqq} into \eqref{eqinto} one gets
$$
\|\partial_{X_{t,\lambda}}\omega\|_{\epsilon-1} \lesssim e^{CV(t)}(\|\partial_{X_{0,\lambda}}\omega_{0}\|_{\epsilon-1} + \|\theta_0\|_{\epsilon}\|\partial_{X_{0,\lambda}}\theta_0\|_{\epsilon}t + \int_0^t\|\nabla\theta(\tau)\|_{L^{\infty}}
\check{\|}X_{t,\lambda}\|_{\epsilon}e^{-CV(\tau)}d\tau).
$$
Setting $$\mathfrak{A}(t) \triangleq (\|\partial_{X_{t,\lambda}}\omega\|_{\epsilon-1}+\check{\|}X_{t,\lambda}\|_{\epsilon})e^{-CV(t)}.$$
By virtue of the last estimate and (\ref{Eq20}) we find
$$\mathfrak{A}(t) \lesssim \mathfrak{A}(0) + \|\theta_0\|_{\epsilon}\|\partial_{X_{0,\lambda}}\theta_0\|_{\epsilon}t + \int_0^t (\|\nabla \theta\|_{L^{\infty}}+\|\nabla v\|_{L^{\infty}}+1)\mathfrak{A}(\tau)d\tau.$$
Gronwall's inequality ensures that
$$\mathfrak{A}(t) \lesssim (\mathfrak{A}(0) + \|\theta_0\|_{\epsilon}\|\partial_{X_{0,\lambda}}\theta_0\|_{\epsilon})e^{C \int_0^t (\|\nabla \theta\|_{L^{\infty}}+\|\nabla v\|_{L^{\infty}}+1)d\tau}.$$
Estimate \eqref{Eq12} completes the proof.
\end{proof}

\subsection{Lipschitz bound of the velocity}This paragraph addresses to the proof of the Lipschitz norm of the velocity $\|\nabla v(t)\|_{L^\infty}$ locally in time, which considered as the core part in the Theorem \ref{thm1}.
\begin{Proposition}\label{Lip-v}
Let $v$ and $\theta$ be smooth solutions of the system (\ref{Eq1}) defined on the time interval $[0,T^{\star}[$. Then there exists $T_{0}$ between $0$ and   $\text{}$ $T^{\star}$ such for all $t \leq$ $T_{0}$ we have:
\begin{equation}
 \|\nabla v(t)\|_{L^{\infty}}\leq M_{0}.
\end{equation}
\end{Proposition}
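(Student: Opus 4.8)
The plan is to run a continuity (bootstrap) argument built on the logarithmic estimate of Theorem \ref{thm3}, fed by the a priori bounds of Proposition \ref{prop1} and the striated estimate of Lemma \ref{cor2}. Throughout keep $V(t)=\int_0^t\|\nabla v(\tau)\|_{L^\infty}d\tau$, so that $V'(t)=\|\nabla v(t)\|_{L^\infty}$, $V(0)=0$, and $V$ is continuous and nondecreasing since $v$ is smooth. First I would introduce the threshold time
$$T_1\triangleq\sup\big\{t\in[0,T^\star): V(s)\le\log 2\ \text{ for all } s\le t\big\},$$
and work on $[0,T_1]$, where every factor $e^{CV(t)}$ occurring below is bounded by the fixed constant $2^{C}$ depending only on $\epsilon$ and $a$.

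On this interval Proposition \ref{prop1} gives $\|\omega(t)\|_{L^a}+\|\omega(t)\|_{L^\infty}\le C_0(1+t)$ and $\|\nabla\theta(t)\|_{L^\infty}\le C_0$, where $C_0$ depends only on the initial data (recall $\omega_0={\bf 1}_{D_0}\in L^a\cap L^\infty$ and $\nabla\theta_0\in L^a\cap L^\infty$ since $\theta_0\in C^{1+\epsilon}$). Next, the striated estimate \eqref{Eq17} of Lemma \ref{cor2}, combined with the non-degeneracy bound \eqref{Eq15} in the form $1/I(X_t)\le e^{CV(t)}/I(X_0)$, controls the anisotropic norm of Definition \ref{D2}: one has $\|\omega(t)\|^\epsilon_{\Sigma,X}\le C_0$ on $[0,T_1]$, because the double-exponential factor $e^{t\|\nabla\theta_0\|_{L^\infty}e^{CV(t)}}$ reduces to a constant once $V\le\log 2$ and $t$ is finite.

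Plugging these into Theorem \ref{thm3}, the decisive observation is that the (possibly large) striated norm enters only through the logarithm. Using that the map $y\mapsto y\,\Log(e+A/y)$ is bounded on any interval $[0,C_0]$ by a constant depending only on $A$ and $C_0$ (and extends continuously by $0$ at $y=0$, so no lower bound on $\|\omega(t)\|_{L^\infty}$ is needed), the product $\|\omega(t)\|_{L^\infty}\Log\big(e+\|\omega(t)\|^\epsilon_{\Sigma,X}/\|\omega(t)\|_{L^\infty}\big)$ stays bounded on $[0,T_1]$. Hence there is a constant $M_0$, depending only on the initial data and on $\epsilon,a$, with $\|\nabla v(t)\|_{L^\infty}\le M_0$ for all $t\le T_1$.

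Finally I would close the bootstrap. Integrating the last bound yields $V(t)\le M_0\,t$ on $[0,T_1]$. Set $T_0\triangleq\min\{T^\star/2,\ (\log 2)/(2M_0)\}$, so that $T_0<T^\star$. If $T_1\le T_0$, then $T_1<T^\star$ and $V(T_1)\le M_0T_1\le\tfrac12\log 2<\log 2$; by continuity of $V$ the inequality $V\le\log 2$ would persist on a slightly larger interval, contradicting the definition of $T_1$. Therefore $T_1>T_0$, and in particular $\|\nabla v(t)\|_{L^\infty}\le M_0$ for all $t\le T_0$, which is the claim. The main obstacle is exactly the coupling introduced by the buoyancy term $G(\theta)$: it produces the severe double-exponential growth $e^{t\|\nabla\theta_0\|_{L^\infty}e^{CV(t)}}$ in the striated estimate \eqref{Eq17}, and the crux of the argument is that this growth is harmless because it sits \emph{inside} the logarithm of Chemin's estimate, where it contributes only the benign term $t\|\nabla\theta_0\|_{L^\infty}e^{CV(t)}$ that the short-time bootstrap keeps under control.
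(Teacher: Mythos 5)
Your overall strategy follows the same skeleton as the paper's proof: feed the a priori bounds of Proposition \ref{prop1} and the striated estimate \eqref{Eq17} of Lemma \ref{cor2} (together with the non-degeneracy bound \eqref{Eq15}) into Chemin's logarithmic estimate of Theorem \ref{thm3}, and exploit the fact that the dangerous double-exponential growth $e^{t\|\nabla\theta_0\|_{L^\infty}e^{CV(t)}}$ enters only through a logarithm. Where you genuinely differ is the closing mechanism: the paper expands the logarithm, imposes the smallness condition $T\|\nabla\theta_0\|_{L^a\cap L^\infty}e^{CV(T)}\le\min(1,\|\omega_0\|_{L^a\cap L^\infty})$, runs Gronwall in $V$, and then fixes $T_0$ by the explicit formula \eqref{T}; you instead run a continuity/bootstrap argument on the set where $V\le\log 2$, under which all factors $e^{CV(t)}$ become constants and no Gronwall step is needed. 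That alternative closing is legitimate, and arguably cleaner than the paper's implicit choice of $T$.

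However, as written there is a genuine gap in the middle of your argument. Your bootstrap set is defined \emph{only} by the condition $V\le\log 2$, so $T_1$ carries no a priori bound and may be arbitrarily large (even infinite). On $[0,T_1]$ the bootstrap hypothesis neutralizes every $e^{CV(t)}$, but it does not touch the factors that grow in $t$: by \eqref{Eq13} one has $\|\omega(t)\|_{L^a\cap L^\infty}\le C_0(1+t)$ (which you record correctly), while \eqref{Eq17} still contains $e^{Ct}$ and $e^{t\|\nabla\theta_0\|_{L^\infty}e^{CV(t)}}\le e^{C_0t}$. Hence the anisotropic norm of Definition \ref{D2} obeys only $\|\omega(t)\|^{\epsilon}_{X_t}\le C_0(1+t)e^{C_0t}$, not $\|\omega(t)\|^{\epsilon}_{X_t}\le C_0$; your justification that the double exponential ``reduces to a constant once $V\le\log 2$ and $t$ is finite'' conflates finiteness at each time with uniform boundedness over $[0,T_1]$. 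Consequently the uniform, data-dependent constant $M_0$ on all of $[0,T_1]$ --- on which your definition of $T_0$ and the contradiction both rest --- is not justified. The repair is straightforward and should be stated: either build a time cap into the bootstrap set, e.g. $T_1\triangleq\sup\{t<\min(1,T^\star):\ V(s)\le\log 2\ \ \forall s\le t\}$, after which every constant really depends only on the data and your argument runs verbatim; or keep the time dependence explicit, obtaining $\|\nabla v(t)\|_{L^\infty}\le\phi(t)$ on $[0,T_1]$ with $\phi$ continuous, increasing and determined by the data, and then choose $T_0<T^\star$ with $T_0\,\phi(T_0)\le\tfrac12\log 2$, which restores the contradiction (the degenerate case $T_1=\infty$ is then handled directly, since the bound $\phi$ holds on any finite interval).
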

\begin{proof}
For simplicity we set $$\mathfrak{B}(t)= e^{Ct}e^{CV(t)}e^{Ct\|\nabla \theta_0\|_{L^{\infty}}e^{CV(t)}}.$$
In particular, from (\ref{Eq17}) we have
$$ \check{\|}X_{t,\lambda}\|_{\epsilon} \leq C(\mathfrak{A}(0)+\|\theta_0\|_{\epsilon}\|\partial_{X_{0,\lambda}}\theta_0\|_{\epsilon})\mathfrak{B}(t). $$
Multiplying the last inequality by $\|\omega(t)\|_{L^{\infty}}$ and using the \eqref{Eq12} to write
$$ \|\omega(t)\|_{L^{\infty}}\check{\|}X_{t,\lambda}\|_{\epsilon} \leq C(\|\omega_0\|_{L^{\infty}}+\|\nabla\theta_0\|_{L^{\infty}}te^{Ct})(\mathfrak{A}(0)+\|\theta_0\|_{\epsilon}\|\partial_{X_{0,\lambda}}\theta_0\|_{\epsilon})\mathfrak{B}(t).$$
Consequently
$$\|\omega(t)\|_{L^{\infty}}\check{\|}X_{t,\lambda}\|_{\epsilon} \leq C(\|\omega_0\|_{L^{\infty}}+1)(\mathfrak{A}(0)+\|\theta_0\|_{\epsilon}\|\partial_{X_{0,\lambda}}\theta_0\|_{\epsilon})\mathfrak{B}(t).$$
Hence
$$\|\partial_{X_{t,\lambda}}\omega(t)\|_{\epsilon-1}+\|\omega(t)\|_{L^{\infty}}\check{\|}X_{t,\lambda}\|_{\epsilon} \leq C(\|\omega_0\|_{L^{\infty}}+1)(\mathfrak{A}(0)+\|\theta_0\|_{\epsilon}\|\partial_{X_{0,\lambda}}\theta_0\|_{\epsilon})\mathfrak{B}(t).$$
According to \eqref{Eq15} and Definition \ref{D2} one gets
\begin{equation}\label{Eqz}
\|\omega(t)\|^{\epsilon}_{X_t} \leq M_0\mathfrak{B}(t),
\end{equation}
in view of Theorem \ref{thm3} and the monotonicity of the map $u\mapsto u\Log(e+\frac{b}{u})$ we obtain
$$\|\nabla v(t)\|_{L^{\infty}} \leq C_{\epsilon}\big(\|\omega_{0}\|_{L^{a}\cap L^{\infty}} +t\|\nabla \theta_0\|_{L^{a} \cap L^{\infty}}e^{CV(t)}\big)\Log \big(e+\frac{\|\omega(t)\|^{\epsilon}_{X_t}}{\|\omega_0\|_{L^{\infty}}}\big).$$
Using (\ref{Eqz}) we find
\begin{eqnarray}\label{Eqzz}
\|\nabla v(t)\|_{L^{\infty}} &\lesssim & \big(\|\omega_{0}\|_{L^{a}\cap L^{\infty}} +t\|\nabla \theta_0\|_{L^{a} \cap L^{\infty}}e^{CV(t)}\big)\nonumber\\
&\times & \big(M_0+t+t\|\nabla \theta_0\|_{L^{a} \cap L^{\infty}}e^{CV(t)}+V(t)\big).
\end{eqnarray}
We choose $T>0$ such that $T$ satisfying
$$T\|\nabla \theta_0\|_{L^{a}\cap L^{\infty}}e^{CV(T)} \leq \min(1,\|\omega_{0}\|_{L^{a}\cap L^{\infty}}).$$
From (\ref{Eqzz}) we obtain
$$\|\nabla v(t)\|_{L^{\infty}} \lesssim \|\omega_{0}\|_{L^{a}\cap L^{\infty}} \big(M_0 +t+\int_0^t \|\nabla v(\tau)\|_{L^{\infty}}d\tau.\big), \forall t\in [0,T].$$
Gronwall's inequality allows us to deduce that
$$\|\nabla v(t)\|_{L^{\infty}} \lesssim  \|\omega_{0}\|_{L^{a}\cap L^{\infty}}(M_0 +t)e^{C\|\omega_{0}\|_{L^{a}\cap L^{\infty}}t}, \forall t\in [0,T].$$
We choose $T$ by the following formula
\begin{equation}\label{T}
T \triangleq \frac{1}{C\|\omega_0\|_{L^{a}\cap L^{\infty}}}\Log \bigg( 1 + \frac{\|\omega_0\|_{L^a\cap L^{\infty}}}{M_0\|\omega_0\|_{L^a\cap L^{\infty}}+1}\Log \big(1+\frac{C\min(\|\omega_0\|_{L^a\cap L^{\infty}},\|\omega_0\|^2_{L^a\cap L^{\infty}})}{\|\nabla \theta_0\|_{L^{\infty}}}\big)\bigg).
\end{equation}

Finally we get for all  $ t \leq T_{0}$

$$\|\nabla v(t)\|_{L^{\infty}} \leq M_0. $$
\end{proof}
\subsection{Existence and uniqueness} For the existence issue of the system \eqref{Eq1} we mollifier the initial data in a way that $v_{0,n}=S_{n}v_{0}, \theta_{0,n}=S_{n}\theta_0,$ where $S_n$ denotes the cut-off operator, see subsection \ref{Little-P}. Consider the approximation system 
\begin{equation}\label{Eq1n}
\left\{ \begin{array}{ll}
\partial_{t}v_{n}+v_{n}\cdot\nabla v_{n}+\nabla p_{n}=G_1(\theta_{n})\vec e_1+G_2(\theta_n)\vec e_2, & \\
\partial_{t}\theta_{n}+v_{n}\cdot\nabla \theta_{n}=0, & \\
\Div v_{n}=0, &\\
(v_n,\theta_n)_{| t=0}=(v_{0,n}, \theta_{0,n}).
\end{array} \right.\tag{NBn}
\end{equation} 
It is clear that $v_{0,n},\theta_{0,n}\in C^{\eta}$ with $\eta>1$, so the fact $G\in C^3$ implies in view of Theorem \ref{thm2.8} that 
\begin{eqnarray*}
\|G_i(\theta_{0,n})\|_{\eta}&\le &C\sup_{|y|\le C\|\theta_{0,n}\|_{L^\infty}}\|G_i^{[\eta]+2}(y)\|_{L^\infty}\|\theta_{0,n}\|_{\eta}\\
&\le&C\sup_{|y|\le C\|\theta_{0}\|_{L^\infty}}\|G_i^{[\eta]+2}(y)\|_{L^\infty}\|\theta_{0,n}\|_{\eta}.
\end{eqnarray*}
We have used in the last line that $\|\theta_{0,n}\|_{L^\infty}\le \|\theta_0\|_{L^\infty}$. Since $\eta>1$ is an arbiratry, we choose it in a way that $[\eta]=1$, we may deduce that 
\begin{equation*}
\|G_i(\theta_{0,n})\|_{\eta}\le C\sup_{|y|\le C\|\theta_{0}\|_{L^\infty}}\|G_i^{3}(y)\|\|\theta_{0,n}\|_{\eta}<\infty.
\end{equation*}
Meaning that $G_i(\theta_{0,n})$ also belongs to $C^\eta$, with $\eta>1$, it follows thanks to \cite{Chae-Kim-Nam-2} that \eqref{Eq1n} admits a unique local solution, that is $v_n,\theta_n,G_i(\theta_n)\in C\big([0,T^{\star}_n[;C^\eta\big)$, with the maximal life span $T^{\star}_n$ fullfils the following blow-up criterion
\begin{equation}\label{bu}
T^{\star}_{n} < \infty \Rightarrow \int_0^{T^{\star}_{n}} \|\nabla v_{n} (\tau)\|_{L^{\infty}} d\tau = +\infty.
\end{equation}
Besides, to uniform the quantities often mollified, we explore the properties of mollifier sequence, in particular the continuity of $S_n$ from $L^p$ into itself for $p\in[1,\infty]$ and an intense para-differential calulsus combined with Bony's decomposition, see \cite{Chemin2} we may write
$$\|\partial_{X_{0,\lambda}} \omega_{0,n} \|_{\epsilon-1} \lesssim \|\partial_{X_{0,\lambda}} \omega_{0} \|_{\epsilon-1} + \check{\|} X_{0,\lambda}\|_\epsilon \|\omega_0\|_{L^{\infty}}$$
$$\|\partial_{X_{0,\lambda}} \theta_{0,n} \|_{\epsilon-1} \lesssim \|\partial_{X_{0,\lambda}} \theta_{0} \|_{\epsilon-1} + \check{\|} X_{0,\lambda}\|_\epsilon \|\nabla\theta_0\|_{L^{\infty}}.$$
To close our claim, assume for some $n\in\NN,\; T_n^{\star}\le T_0$ with $T_0$ is expressed by \eqref{T} which justify all the previous a priori estimates. So, in accordance with Proposition \ref{Lip-v}, we conclude the following
\begin{equation}\label{v}
\|\nabla v_n(t)\|_{L^{\infty}} \leq M_0.
\end{equation}
$$ \|\omega_{n}(t)\|_{L^{a} \cap L^{\infty}} + \|\nabla \theta_{n}(t)\|_{L^{a} \cap L^{\infty}} \leq M_0,\quad \|\omega_{n}(t)\|_{L^{a} \cap L^{\infty}} + \|\nabla G_i(\theta_{n})(t)\|_{L^{a} \cap L^{\infty}}\le M_0$$
because 
\begin{eqnarray*}
\|\nabla G_i(\theta_{0,n})\|_{L^{a} \cap L^{\infty}} &\leq & \|G_i(\theta_{0,n})\|_{L^\infty} \|\nabla \theta_{0,n}\|_{L^{a} \cap L^{\infty}}\\
&\lesssim &\|\nabla \theta_{0}\|_{L^{a} \cap L^{\infty}}.
\end{eqnarray*}
Furthermore, 
$$\|G(\theta_{n}(t))\|^{\epsilon +1}_{X_{t,n}} + \|\omega_{n}(t)\|^{\epsilon}_{X_{t,n}} + \sup_{\lambda\in\Lambda}\|\partial_{X_{0,\lambda}} \Psi_n (t)\|_{\epsilon} \leq M_0,$$
with $\Psi_n$ refers to the associated flow of $v_n$. Accordingly \eqref{v} contradicts the blow-up criterion \eqref{bu} and therefore $T^{\star}_{n} \geq T.$ Finally, we explore the classical compactness argument to establish that the solutions sequence $(v_n,\theta_n)$ of the system \eqref{Eq1n}  converges when $n$ goes to infinity towards $(v,\theta)$ solution of the system (\ref{Eq1}).

\hspace{0.5cm}To treat the uniqueness issue for \eqref{Eq1}. It will be proven in the following space $\mathcal{M}_{T_0}$, with $\mathcal{M}_{T_0}=L^{\infty}([0,T_{0}],L^{p'} )\cap W^{1,\infty}$ for some $ 2<p'<\infty.$ Let us denote that this space is larger than the existence one because \eqref{Eq1} is of the hyperbolic type. Let $(v_i,\nabla p_i,\theta_i) \in \mathcal{M}_{T_0}, 1\leq i \leq 2$ be two solutions of the system (\ref{Eq1}), we set $\delta v = v_1 - v_2, \delta\theta = \theta_1 - \theta_2$ and  $\delta p = p_1 - p_2$ where the triplet $(\delta v, \delta \theta, \delta p)$ satisfy the following system
\begin{equation}\label{Diff-0} 
\left\{ \begin{array}{ll}
\partial_{t} \delta v+v_2\cdot\nabla \delta v = G(\theta_1)-G(\theta_2)-\nabla \delta p - \delta v\cdot\nabla v_1,& \\
 \partial_{t} \delta\theta+v_2\cdot\nabla\delta\theta= -\delta v\cdot\nabla\theta, & \\
\Div v=0, &\\
({v},{\theta})_{| t=0}=({v}_0,{\theta}_0).
\end{array} \right.
\end{equation}
The classical $L^{q}-$estimate for $\delta v$ provides
\begin{eqnarray}\label{delta-v-}
\\
\nonumber\|\delta v(t)\|_{L^{q}} \leq \|\delta v_0\|_{L^{q}} + \int_0^t(\|\delta v(\tau)\|_{L^{q}}\|\nabla v_1(\tau)\|_{L^\infty}+\|\delta p(\tau)\|_{L^{q}})d\tau + \int_0^t \|(G(\theta_1)-G(\theta_2))(\tau)\|_{L^{q}}d\tau.
\end{eqnarray}
For the term $\|(G(\theta_1)-G(\theta_2))(\tau)\|_{L^{q}},$ using Taylor's formula at order $1$ we get
$$ G_i(\theta_1)-G_i(\theta_2)=(\theta_1-\theta_2)\int_0^1 G'_i(\theta_2 +\sigma(\theta_1-\theta_2))dr.$$
Therefore
$$\|(G(\theta_1)-G(\theta_2))\|_{L^{q}} \leq \|\delta \theta\|_{L^{q}}\int_0^1 \|G'(\theta_2 +\sigma(\theta_1-\theta_2))\|_{L^{\infty}}dr.$$\\
On the other hand
$$\|G'(\theta_2 +\sigma(\theta_1-\theta_2))\|_{L^{\infty}} \leq \sup_{|x|\leq \|\theta_{2}+\sigma(\theta_1-\theta_2)\|_{L^{\infty}}}|G'_{i}(x)|.$$
Since $ 0< \sigma <1$ and $(\theta_1,\theta_2)$ satisfy two systems with the same initial data, i.e. $\|\theta_1(t)\|_{L^{\infty}} \leq \|\theta_{0}\|_{L^{\infty}}$ and $\|\theta_2(t)\|_{L^{\infty}} \leq \|\theta_{0}\|_{L^{\infty}}$ we further get
$$\|G'(\theta_2 +\sigma(\theta_1 - \theta_2))\|_{L^{\infty}} \leq \sup_{|x| \lesssim \|\theta_0\|_{L^{\infty}}}|G'_{i}(x)| \leq C .$$
Hence
\begin{equation}\label{EqG}
\int_0^t \|(G(\theta_1)-G(\theta_2)(\tau)\|_{L^{q}}d\tau \lesssim \int_0^t \|\delta \theta\|_{L^{q}}d\tau.
\end{equation}
In accordance with \eqref{delta-v-}, one has
\begin{equation}\label{Eq2.16}
\|\delta v(t)\|_{L^{q}} \lesssim \|\delta v_0\|_{L^{q}} + \int_0^t(\|\delta v(\tau)\|_{L^{q}}\|\nabla v_1(\tau)\|_{L^\infty}+\|\delta p(\tau)\|_{L^{q}})d\tau + \int_0^t \|\delta \theta (\tau)\|_{L^{q}}d\tau.
\end{equation}
Also $L^{q}-$estimate for the density allows us to write
\begin{equation}\label{Eq2.17}
\|\delta \theta (t)\|_{L^{q}} \leq \|\delta \theta_0\|_{L^{q}} + \int_0^t \|\delta v(\tau)\|_{L^{q}}\|\nabla \theta_1\|_{L^{\infty}} d\tau.
\end{equation}
Concerning the pressure term using the incompressibility condition and the identity $\Div(v_2\cdot\delta v) = \Div(v\cdot\nabla v_2)$ it holds that
\begin{eqnarray*}
\nabla \delta p &=& \nabla \Delta^{-1} \Div (-\delta v \cdot \nabla v_1 + G(\theta_1)-G(\theta_2)) -\nabla \Delta^{-1} \Div (v_2\cdot \nabla \delta v)\\
&=& \nabla \Delta^{-1} \Div (-\delta v \cdot \nabla(v_1+v_2)+G(\theta_1)-G(\theta_2)).
\end{eqnarray*}
Thus we obtain
\begin{eqnarray}
\|\nabla \delta p\|_{L^{q}} &\lesssim & \|\delta v\|_{L^{q}}(\|\nabla v_1\|_{L^{\infty}}+ \|\nabla v_2\|_{L^{\infty}})+\|G(\theta_1)- G(\theta_2)\|_{L^{q}}\nonumber\\  &\lesssim & \|\delta v\|_{L^{q}}(\|\nabla v_1\|_{L^{\infty}}+ \|\nabla v_2\|_{L^{\infty}})+\|\delta\theta\|_{L^{q}}.
\end{eqnarray}
Above, we have used the continuity of Riesz transform on $L^{q}$ with $1<q<\infty, $ putting \eqref{Eq2.17} into \eqref{Eq2.16}, it follows
$$\|\delta v(t)\|_{L^{q}} \lesssim \|\delta v_0\|_{L^{q}} + \int_0^t(\|\delta v(\tau)\|_{L^{q}}(\|\nabla v_1(\tau)\|_{L^\infty}+\|\nabla v_2(\tau)\|_{L^{\infty}})d\tau + \int_0^t \|\delta \theta (\tau)\|_{L^{q}}d\tau.$$
Inserting \eqref{Eq2.17} in the last inequality to obtain
$$\|\delta v(t)\|_{L^{q}} \lesssim \|\delta v_0\|_{L^{q}} + \|\delta \theta_0\|_{L^{q}} + \int_0^t(\|\delta v(\tau)\|_{L^{q}}(\|\nabla v_1(\tau)\|_{L^\infty}+\|\nabla v_2(\tau)\|_{L^{\infty}} + \|\nabla\theta_1\|_{L^{\infty}})d\tau.$$
Gronwall's lemma entails that
$$\|\delta v(t)\|_{L^{q}} \lesssim (\|\delta v_0\|_{L^{q}} + \|\delta \theta_0\|_{L^{q}})e^{C{\int_0^t(\|\nabla v_1(\tau)\|_{L^\infty}+\|\nabla v_2(\tau)\|_{L^{\infty}} + \|\nabla\theta_1\|_{L^{\infty}})d\tau}}.$$
We also obtain
$$\|\delta \theta(t)\|_{L^{q}} \lesssim \|\delta \theta_0\|_{L^{q}} + (\|\delta v_0\|_{L^{q}} + \|\delta \theta_0\|_{L^{q}})e^{ct}e^{C{\int_0^t(\|\nabla v_1(\tau)\|_{L^\infty}+\|\nabla v_2(\tau)\|_{L^{\infty}} + \|\nabla\theta_1\|_{L^{\infty}})d\tau}}.$$
So the proof is completed.

\subsection{Proof of Theorem \ref{thm1}}
As the boundary $\partial D_0$ is a Jordan curve of class $C^{1+\epsilon}$, meaning that  there exists in view of the definition \ref{Chart} a real function $f_0$ and a neighborhood $V_0$ such that $f_0 \in C^{1+\epsilon}, \; \nabla f_0(x) \neq 0$ on  $V_0$ and $\partial D_0 = f^{-1}_0(\lbrace0\rbrace)\cap V_0$. Let $\varphi$ be a smooth function satisfying
$$ \supp \varphi \subset V_0, \quad \varphi(x)=1,\quad \forall x\in V_1,$$
with $V_1$ is a small nighbrohood of $V_0.$ Now we construct an admissible family as follows:
$$X_{0,0}(x) = \nabla^{\perp}f_0(x)=\left( \begin{array}{ll}
-\partial_2 f_0(x) \\
\partial_1 f_0(x)
\end{array} \right), \quad X_{0,1}(x)=(1-\varphi(x))\left( \begin{array}{ll}
0 \\
1
\end{array} \right).$$
Set $X_0=(X_{0,\lambda})_{\lambda \in \lbrace0,1\rbrace},$ so we have $X_{0,\lambda}$ belongs to $C^\epsilon$ as well as its divergence, from Definition \ref{D1} we find that $X_0$ is an admissible family.
\hspace{0.5cm}By hypothesis $\theta_0 \in C^{1+\epsilon}$, this gives $\theta_0 \in C^{1+\epsilon}(X_{0,i})$ with $i\in \lbrace1,2\rbrace$.  On the other hand, we have $\partial_{\nabla^\bot f_0} \omega_0 =0.$ Since $1-\varphi\equiv 0$ on $V_1$ then $\partial_{X_{0,1}}\omega_0 = 0,$ or, in view of Theorem \ref{thm1}, the system \eqref{Eq1} has a unique local solution $v,\theta \in L^{\infty}([0,T],\Lip(\RR^2)).$
Next, we check the regularity of the transported initial domain $D_t$. We parametrize $\partial D_0$ by considering $x_0\in  \partial D_0, \gamma^0 \in C^\epsilon(\RR_+,\RR^2)$ satisfies the ordinary equation
$$\left\{ \begin{array}{ll}
\partial_\sigma \gamma^0(s)=X_{0,0}\big(\gamma^0(s)\big)\\
\gamma^0(0)=x_0,
\end{array} \right.$$
for every $t\geq 0,$
$$\gamma(t,s) \triangleq \Psi(t,\gamma^0(s)).$$
Differentating with respect to the variable $s$ we get
$$\partial_s \gamma (t,s) = (\partial_{X_{0,0}}\Psi)(t,\gamma^0(s)).$$
Thus we get $\text{ }\partial_{X_{0,0}}\Psi \in L^\infty([0,T_0],C^\epsilon)$ consequently $\gamma(t) \in L^\infty([0,T_0],C^{1+\epsilon})$. This completes the regularity persistence of the boundary $\partial D_t.$

\section{Singular vortex patches}\label{S4}

This section treats, especially the singular vortex patch stated in Theorem \ref{thm2} with more general initial data belonging to Yudovich class. To derive a local well-posedness topic for \eqref{Eq1} in the setting of singular patch we shall assume in addition that $G(\theta_0)$ is a constant environing the singularity in the direction $\vec e_1$ and $e_2$, where $\theta_0$ refers to initial density. Such assumption was imposed in the purpose to cancel the effects of singularity for the density process which is considered as a real drawback of this problem.

\hspace{0.5cm}The general version of Theorem \eqref{Eq1} will contain some necessary tools that were presented by Chemin in \cite{Chemin2} concerning the singular vortex patch for the Euler equations. First, let us denote that the formalism already accomplished for the first part differs from what we will do later because the initial boundary of the patch contains a finite set of singularities. This latter phenomenon contributes a loss of regularity for the family of vector fields in the sense of degenerating. To remedy this serious problem, we will truncate with an admissible family indexed by a truncate parameter.

\subsection{Singular patch tool box} We shall collect some technical tools which arise in the resolution of the singular patch problem. More precisely, we state the concept of family vector fields which respects the particularity of the singularities and detects some regularities of the velocity. To do so, we start with the following elementary definitions.
\begin{Definition}\label{D5}
Let $\Sigma$ be a closed set of $\RR^2$, for $h>0$ define $\Sigma_h$ the neighborhood of $\Sigma$ by
$$
\Sigma_h=\{x\in\RR^2 :d(x,\Sigma)\le h\}.
$$
We denote by $L(\Sigma)$ the set of the functions $v$ such that
$$ \|v\|_{L(\Sigma)} \triangleq \sup_{0<h\leq e^{-1}} \frac{\|v\|_{L^\infty(\Sigma^c_h)}}{-\Log h} < \infty,$$
where $\Sigma_h^{c}= \lbrace x\in \RR^2 : d(x,\Sigma)>h \rbrace.$

\end{Definition}
\hspace{0.5cm}Since the $\log$-Lipschitz class is a pivot tool in Yudovich's solutions which arises in the setting of a bounded and integrable vorticity. More precisely, we have.
\begin{Definition}\label{D6}
 The class of $\log$-Lipschitz functions denoted by $LL$ is defined by
$$ LL \triangleq \bigg\lbrace v\in L^\infty(\RR^2) : \|v\|_{LL} \triangleq \|v\|_{L^\infty} + \sup_{0<|x-y|<1} \frac{|v(x)-v(y)|}{|x-y|\Log\frac{e}{|x-y|}} < \infty \bigg\rbrace.$$
\end{Definition}
The relationship between the velocity and its vorticity in $LL-$space is given the following statement.
\begin{Lemma}\label{lem5}
For any $a\in]1,\infty[$ we have $\|v\|_{LL} \leq C \|\omega\|_{L^a\cap L^\infty},$ with $C=C(a)$ being a positive constant.
\end{Lemma}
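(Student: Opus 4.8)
The plan is to work directly from the Biot--Savart law $v=\nabla^{\perp}\Delta^{-1}\omega=K*\omega$, where $K(x)=\frac{1}{2\pi}\frac{x^{\perp}}{|x|^{2}}$ obeys the pointwise bounds $|K(x)|\lesssim |x|^{-1}$ and $|\nabla K(x)|\lesssim |x|^{-2}$. The statement splits into two parts: the uniform bound $\|v\|_{L^{\infty}}\lesssim\|\omega\|_{L^{a}\cap L^{\infty}}$ and the log-Lipschitz modulus $|v(x)-v(y)|\lesssim\|\omega\|_{L^{a}\cap L^{\infty}}\,|x-y|\,\Log\frac{e}{|x-y|}$ for $0<|x-y|<1$. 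Both follow from decomposing the convolution integrals according to the distance of the integration variable to the base point(s), using $\|\omega\|_{L^{\infty}}$ near the singularity of the kernel and $\|\omega\|_{L^{a}}$ in the tail.

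For the $L^{\infty}$ bound I would write $v(x)=\int K(x-z)\omega(z)\,dz$ and cut at $|x-z|=1$. On the inner piece $|x-z|\le 1$ I use $|\omega|\le\|\omega\|_{L^{\infty}}$ together with the two-dimensional integrability $\int_{|w|\le1}|w|^{-1}\,dw<\infty$; on the outer piece I apply Hölder's inequality with exponents $a$ and $a'$, so that the tail contributes $\big(\int_{|w|>1}|w|^{-a'}\,dw\big)^{1/a'}\|\omega\|_{L^{a}}$, a finite quantity in the relevant range controlled by $\|\omega\|_{L^{a}}$. This yields $\|v\|_{L^{\infty}}\lesssim\|\omega\|_{L^{\infty}}+\|\omega\|_{L^{a}}$.

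The core of the argument is the log-Lipschitz estimate. Fixing $x,y$ with $\delta:=|x-y|<1$, I would write $v(x)-v(y)=\int\big(K(x-z)-K(y-z)\big)\omega(z)\,dz$ and split $\RR^{2}$ into three zones. In the near zone $|x-z|<2\delta$ (where also $|y-z|\lesssim\delta$) I bound the two kernels separately by $|x-z|^{-1}$ and $|y-z|^{-1}$ and use $\|\omega\|_{L^{\infty}}$ with the integrability of $|w|^{-1}$, producing a term of size $\lesssim\delta\,\|\omega\|_{L^{\infty}}$. In the intermediate zone $2\delta<|x-z|<1$ the segment joining $x-z$ and $y-z$ stays away from the origin, so the mean value theorem gives $|K(x-z)-K(y-z)|\lesssim\delta\,|x-z|^{-2}$; integrating against $\|\omega\|_{L^{\infty}}$ over this annulus yields $\lesssim\delta\,\|\omega\|_{L^{\infty}}\int_{2\delta<|w|<1}|w|^{-2}\,dw\lesssim\delta\,\|\omega\|_{L^{\infty}}\,\Log\frac{1}{\delta}$, which is exactly where the logarithm is generated. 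In the far zone $|x-z|>1$ I again use the mean value bound $\delta\,|x-z|^{-2}$ together with Hölder's inequality against $L^{a}$, and since $|w|^{-2}\in L^{a'}(\{|w|>1\})$ for every $a>1$ this contributes $\lesssim\delta\,\|\omega\|_{L^{a}}$. Summing the three zones gives $|v(x)-v(y)|\lesssim\|\omega\|_{L^{a}\cap L^{\infty}}\,\delta\,\Log\frac{e}{\delta}$.

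I expect the main obstacle to lie in the intermediate-zone bookkeeping: one must verify that along the segment $[x-z,y-z]$ the argument of $K$ stays comparable to $|x-z|$, so that the mean value estimate $\delta\,|x-z|^{-2}$ is legitimate, which is guaranteed precisely by the cutoff radius $2\delta$ since it forces $|y-z|\ge|x-z|-\delta\ge\delta$. Once the three regions are isolated and these comparability checks are in place, the only genuinely non-elementary feature is the logarithmic divergence of $\int|w|^{-2}$ over the annulus, and it is this divergence that produces the sharp $\Log\frac{e}{\delta}$ factor defining the $LL$ class. Collecting the $L^{\infty}$ bound with the modulus estimate then yields $\|v\|_{LL}\lesssim\|\omega\|_{L^{a}\cap L^{\infty}}$ with a constant depending only on $a$.
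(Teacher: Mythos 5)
The paper never proves this lemma: it is quoted as a classical fact (the standard log-Lipschitz Biot--Savart estimate from Chemin's theory, stated in the cited Hassainia--Hmidi paper), so there is no internal proof to compare against, and your direct convolution argument is the natural route. The modulus-of-continuity half of your argument is essentially correct and works for \emph{every} $a\in]1,\infty[$: the three-zone splitting at radii $2\delta$ and $1$, the mean value bound $|K(x-z)-K(y-z)|\lesssim\delta|x-z|^{-2}$ on the intermediate annulus (your comparability check $|x-z|-\delta\ge|x-z|/2$ is exactly what is needed there), the logarithm produced by $\int_{2\delta<|w|<1}|w|^{-2}\,dw$, and the far-zone H\"older estimate, which only requires $|w|^{-2}\in L^{a'}(\{|w|>1\})$, i.e.\ $a'>1$. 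One small repair: in the far zone $|x-z|>1$ the segment $[x-z,y-z]$ is comparable to $|x-z|$ only when $\delta\le 1/2$ (take $|x-z|=1.01$, $\delta=0.99$); so first reduce to $\delta\le 1/2$, the case $1/2<\delta<1$ following trivially from the $L^\infty$ bound because then $\delta\log\frac{e}{\delta}\ge\frac12$.

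The genuine gap sits in the $L^\infty$ bound, hidden behind your phrase ``a finite quantity in the relevant range.'' In dimension two, $\int_{|w|>1}|w|^{-a'}\,dw<\infty$ if and only if $a'>2$, i.e.\ if and only if $a<2$. For $a\ge2$ that integral diverges and the H\"older step collapses; worse, no alternative argument can rescue the statement in that range, because for $a>2$ the inequality $\|v\|_{L^\infty}\lesssim\|\omega\|_{L^a\cap L^\infty}$ is simply false: the radial vorticity $\omega(z)=\min\bigl(1,|z|^{-\alpha}\bigr)$ with $2/a<\alpha<1$ belongs to $L^a\cap L^\infty$, yet the associated velocity $v(x)=\frac{x^{\perp}}{|x|^{2}}\int_0^{|x|}s\,\min\bigl(1,s^{-\alpha}\bigr)\,ds$ grows like $|x|^{1-\alpha}$. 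Since the $LL$-norm of Definition~\ref{D6} contains $\|v\|_{L^\infty}$, the lemma as stated for all $a\in]1,\infty[$ is not provable; it holds precisely for $a\in]1,2[$, which is the range assumed in Theorems~\ref{thm1} and~\ref{thm2} and the only range in which the paper ever invokes the lemma. Your proof is therefore correct exactly on $]1,2[$, but as a proof of the statement as written the $L^\infty$ step fails, and you should have flagged the restriction $a<2$ explicitly (and amended the statement) rather than glossing over it.
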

As be explain in the introduction about the Yudovich's solutions, the velocity vector field belonging only to $LL$ and generates, according to Osgood lemma a unique flow $\Psi$ which is an homeomorph with respect to time and space variables and satisfying the classical ordinary equation 
\begin{equation*}
\left\{\begin{array}{ll}
\frac{\partial}{\partial t}\Psi(t,x)=v(t,\Psi(t,x))&\\
\Psi(0,x)=x.
\end{array}
\right.
\end{equation*}
The dynamic view of the transported sets and their complementary by the flow $\Psi$ associated to $v\in LL$ is given by the following.  
\begin{Lemma}\label{Lem6}
Let $A_0\subset\RR^2$ and $v\in L^{1}_{\loc}(\RR_+;$LL$)$.  We set $A(t) \triangleq \Psi(t,A_0)$, then we have
$$
\Psi\big(t,(A_0)^{c}_{h}\big) \subset \big(A(t)\big)^{c}_{\delta_{0,t}(h)},\quad\Psi\big(\tau,\Psi^{-1}\big(t,(A_t)^{c}_h\big)\big) \subset \big(A(\tau)\big)^{c}_{\delta_{\tau,t}(h)}\;\;  \forall \tau\in[0,t],
$$
with $$\quad \delta_{\tau,t}(h) \triangleq h^{\Exp\int_\tau^t \|v(\tau')\|_{LL}d\tau'}.$$
\end{Lemma}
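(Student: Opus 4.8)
The plan is to reduce the two set-inclusions to a single two-point distortion estimate for the flow $\Psi$ associated with $v\in LL$, and then to propagate it forward in time (for the first inclusion) and backward (for the second). Throughout I use that $\Psi(t,\cdot)$ is a homeomorphism, so that $A(t)=\Psi(t,A_0)$ and the distance of a flowed point to $A(t)$ is realised along trajectories: for $p\in\RR^2$,
\[
d\big(\Psi(s,p),A(s)\big)=\inf_{y\in A_0}\big|\Psi(s,p)-\Psi(s,y)\big|.
\]

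First I would establish the core estimate. Fix two initial points $p,q$ and set $z(s)=|\Psi(s,p)-\Psi(s,q)|$. Differentiating the flow ODE gives $\frac{d}{ds}(\Psi(s,p)-\Psi(s,q))=v(s,\Psi(s,p))-v(s,\Psi(s,q))$, so that, working with $z^2$ to avoid the non-smoothness of $|\cdot|$ and invoking the log-Lipschitz modulus from Definition \ref{D6},
\[
\Big|\frac{d}{ds}z(s)\Big|\le \|v(s)\|_{LL}\,z(s)\,\Log\frac{e}{z(s)}\qquad (z(s)<1).
\]
The key algebraic trick is to pass to $w(s)=\Log\frac{e}{z(s)}$, which linearises this nonlinear inequality into $|\dot w(s)|\le \|v(s)\|_{LL}\,w(s)$. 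Gronwall's lemma then yields the two-sided comparison, for $0\le s_1\le s_2$,
\[
w(s_2)\le w(s_1)\,\Exp\!\int_{s_1}^{s_2}\|v(\tau')\|_{LL}\,d\tau'\quad\text{and}\quad w(s_1)\le w(s_2)\,\Exp\!\int_{s_1}^{s_2}\|v(\tau')\|_{LL}\,d\tau',
\]
which, after exponentiating, is exactly a lower bound of the form $z(s_2)\ge \delta_{s_1,s_2}(z(s_1))$ together with its time-reversed analogue, with $\delta$ as in the statement. The regime $z\ge 1$ (trajectories far apart) is harmless and is handled by the crude bound coming from $\|v\|_{L^\infty}\le\|v\|_{LL}$.

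With the two-point estimate in hand, the first inclusion follows by taking $s_1=0$, $s_2=t$ and $q=y\in A_0$: for $x\in (A_0)^c_h$ one has $|x-y|\ge d(x,A_0)>h$ for every $y\in A_0$, and since $h\mapsto\delta_{0,t}(h)$ is increasing, taking the infimum over $y$ gives $d(\Psi(t,x),A(t))>\delta_{0,t}(h)$, i.e. $\Psi(t,x)\in (A(t))^c_{\delta_{0,t}(h)}$. For the second inclusion I would run the comparison backward in time between $\tau$ and $t$: if $p\in\Psi^{-1}(t,(A_t)^c_h)$ then $|\Psi(t,p)-\Psi(t,y)|\ge d(\Psi(t,p),A(t))>h$ for every $y\in A_0$; applying the time-reversed inequality on $[\tau,t]$ and taking the infimum over $A(\tau)=\Psi(\tau,A_0)$ yields $d(\Psi(\tau,p),A(\tau))>\delta_{\tau,t}(h)$, which is the claim.

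The main obstacle is the core estimate: because the modulus $z\,\Log(e/z)$ degenerates as $z\to 0$, one cannot close the argument by ordinary Gronwall applied to $z$ itself — this is precisely the situation where control of the flow rests on Osgood's lemma rather than on a Lipschitz bound. The decisive point is that the Osgood divergence $\int_0\frac{dr}{r\,\Log(e/r)}=+\infty$ guarantees that two distinct trajectories cannot collide in finite time, and the substitution $w=\Log(e/z)$ is exactly what converts this qualitative non-collision into the quantitative, Gronwall-amenable inequality $|\dot w|\le\|v\|_{LL}\,w$ that produces the explicit modulus $\delta_{\tau,t}$. Minor technical care is then needed to differentiate $z(s)$ (handled via $z^2$) and to preserve the strict inequalities when passing to the infima defining the distances.
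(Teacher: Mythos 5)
First, a point of reference: the paper never proves Lemma \ref{Lem6} --- it is recalled as a known fact from \cite{Chemin2} and \cite{Hassainia-Hmidi} --- so your attempt can only be compared with the standard argument behind those references, which is indeed the one you follow: a two-point separation estimate for the flow, linearised via $w=\log\frac{e}{z}$, two-sided Gronwall, infima over $A_0$, and time reversal for the second inclusion. That architecture, including the role you assign to Osgood uniqueness (non-collision, so $w$ stays finite), is the right one.

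However, the decisive step is quantitatively wrong. Exponentiating $w(s_2)\le E\,w(s_1)$, with $E:=\exp\int_{s_1}^{s_2}\|v(\tau')\|_{LL}\,d\tau'$ and $w=\log(e/z)$, gives
\[
z(s_2)\ \ge\ e\Big(\frac{z(s_1)}{e}\Big)^{E}\;=\;e^{1-E}\,z(s_1)^{E},
\]
not $z(s_2)\ge z(s_1)^{E}$; since $E\ge1$, the factor $e^{1-E}$ makes this \emph{strictly weaker} than the modulus $\delta_{s_1,s_2}$ claimed in the statement (test $h=e^{-1}$, $E=2$: Gronwall gives $z\ge e^{-3}$, while the lemma asserts $z\ge e^{-2}$). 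This loss is not removable by more care at the same level of information: the comparison ODE $\dot z=-\|v(s)\|_{LL}\,z\log(e/z)$ integrates exactly to $z(t)=e^{1-E}z(0)^{E}$, so no proof that uses only the two-point differential inequality can produce the exponent of the statement under the paper's normalisation $|x-y|\log\frac{e}{|x-y|}$ of Definition \ref{D6} (the clean power law $z(t)\ge z(0)^{E}$ would correspond to the modulus $|x-y|\log\frac{1}{|x-y|}$). What your argument does establish is the lemma with $\delta_{\tau,t}(h)=e\,(h/e)^{\exp\int_\tau^t\|v\|_{LL}d\tau'}$, equivalently, using $h\le e^{-1}$, with $h^{2\exp\int_\tau^t\|v\|_{LL}d\tau'}$; this is in fact all the paper ever needs, since every application of the lemma (Propositions \ref{prop4}, \ref{prop5}, \ref{prop6}, \ref{prop7}) only invokes bounds of the form $h^{-C\int_0^tW(\tau)d\tau}$ with an unspecified universal constant $C$, but it is not the statement as written. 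A second, more minor, gap: dismissing the regime $z\ge1$ as ``harmless'' is not an argument, because your differential inequality is valid only while $z<1$ and $z$ may cross that threshold on $[0,t]$. The clean repair is to set $\sigma=\sup\{s\le t:\ z(s)\ge h\}$; either $\sigma=t$, in which case $z(t)\ge h\ge\delta_{0,t}(h)$ and there is nothing to prove, or $z(\sigma)=h$ and $z<h\le e^{-1}$ on $(\sigma,t]$, so Gronwall may be applied on $[\sigma,t]$ only, which even improves the exponent. The same device, run backwards from $t$, closes the second inclusion.
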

Along this section we shall deal with different transport type equations, so it is legitimate to characterize its solution by some regularity. Especially we have.
\begin{Proposition}\label{prop4}
Let $(\epsilon,a)\in]-1,1[\times]1,\infty[$ and $v$ be a smooth divergence-free vector field. Set
$$W(t) \triangleq \big(\|\nabla v(t)\|_{L(\Sigma_t)} + \|\omega(t)\|_{L^a \cap L^\infty}\big)\Exp\bigg( \int_0^t \|v(\tau)\|_{LL}d\tau\bigg),\quad \Sigma_t = \Psi(t,\Sigma_0).$$
Let $f \in L^{\infty}_{\loc}([0,T],C^\epsilon)$ be a solution of the following inhomogeneous transport equation,
\begin{equation*}
\left\{ \begin{array}{ll}
\partial_{t}f+v\cdot\nabla f=g, \\
f_{| t=0}={f}_0,
\end{array} \right.
\end{equation*}\\
with $g=g_1+g_2$ is given and belongs to $L^1([0,T],C^\epsilon).$ We assume that $\supp f_0 \subset(\Sigma_0)^{c}_{h}$ and $\supp g(t) \subset(\Sigma_0)^{c}_{\delta(t,h)}$ for all $0\leq t \leq T$, and for some small $h$
$$\|g_2(t)\|_\epsilon \leq -C\Log(h)W(t)\|f(t)\|_\epsilon.$$
Then
$$\|f(t)\|_\epsilon \leq \|f_0\|_\epsilon h^{-C\int_0^t W(\tau)d\tau}+\int_0^t h^{-C\int_{\tau}^t W(\tau')d\tau'} \|g_1(\tau)\|_\epsilon d\tau,$$
where $C$ is a universal constant.
\end{Proposition}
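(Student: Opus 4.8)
The plan is to derive an \emph{a priori} estimate for $\|f(t)\|_\epsilon$ in which the usual Lipschitz size of $v$ is replaced by the support--localized, effective constant $-C(\Log h)\,W(t)$, and then to close by an integrating--factor (Gronwall) argument: the exponential $\Exp\big(\int_0^t (-C\Log h)W\big)$ equals $h^{-C\int_0^t W}$, which is exactly what produces the two terms in the conclusion.

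First I would propagate the support hypotheses. Since $f$ solves $(\partial_t+v\cdot\nabla)f=g$ with $\supp f_0\subset(\Sigma_0)^c_h$ and the source supported away from the singular set, Lemma \ref{Lem6} (applied with $A_0=\Sigma_0$) ensures that $f(t)$, together with the transported contributions of $g$, remains supported in $(\Sigma_t)^c_{\delta_{0,t}(h)}$, where $\delta_{0,t}(h)=h^{\Exp\int_0^t\|v(\tau)\|_{LL}d\tau}\le e^{-1}$. On this region Definition \ref{D5} turns the $L(\Sigma_t)$ control into a genuine Lipschitz bound,
$$
\|\nabla v(t)\|_{L^\infty((\Sigma_t)^c_{\delta_{0,t}(h)})}\le -\Log\big(\delta_{0,t}(h)\big)\,\|\nabla v(t)\|_{L(\Sigma_t)}=-(\Log h)\,\Exp\Big(\int_0^t\|v\|_{LL}d\tau\Big)\|\nabla v(t)\|_{L(\Sigma_t)}\le -(\Log h)\,W(t).
$$
Thus, on the support of $f(t)$, the field $v$ is effectively Lipschitz with constant at most $-(\Log h)W(t)>0$ (recall $h<e^{-1}$).

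Next I would run the standard $C^\epsilon=\mathrm{B}^\epsilon_{\infty,\infty}$ transport estimate in the Littlewood--Paley framework, localizing the commutator to $\supp f$. Applying $\Delta_q$ yields $(\partial_t+v\cdot\nabla)\Delta_q f=\Delta_q g+[v\cdot\nabla,\Delta_q]f$; composing with the flow $\Psi$ (which preserves $L^\infty$) and multiplying by $2^{q\epsilon}$ gives, after taking the supremum in $q$,
$$
\|f(t)\|_\epsilon\le\|f_0\|_\epsilon+\int_0^t\|g(\tau)\|_\epsilon\,d\tau+\int_0^t\sup_q 2^{q\epsilon}\|[v\cdot\nabla,\Delta_q]f(\tau)\|_{L^\infty}\,d\tau.
$$
The decisive point is the commutator bound $2^{q\epsilon}\|[v\cdot\nabla,\Delta_q]f\|_{L^\infty}\lesssim -C(\Log h)W(t)\,\|f\|_\epsilon$. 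For the high frequencies, where $2^{-q}\ll\delta_{0,t}(h)$, the rapid decay of the kernel of $\Delta_q$ concentrates the commutator integral at $y\in\supp f$ with $|x-y|\lesssim 2^{-q}$, so the increment $v(x)-v(y)$ only probes $\nabla v$ on $(\Sigma_t)^c_{\delta_{0,t}(h)/2}$ and the bound of the previous paragraph applies. For the low frequencies I would instead use the log--Lipschitz modulus of $v$, dominated through Lemma \ref{lem5} by $\|\omega\|_{L^a\cap L^\infty}$; the accompanying logarithmic factor is absorbed into $-\Log h$, so this contribution is again controlled by $-C(\Log h)W(t)$. This is precisely why $W$ must carry both $\|\nabla v\|_{L(\Sigma_t)}$ and $\|\omega\|_{L^a\cap L^\infty}$.

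Finally, using the splitting $g=g_1+g_2$ together with the hypothesis $\|g_2(t)\|_\epsilon\le -C(\Log h)W(t)\|f(t)\|_\epsilon$, both the $g_2$ term and the commutator term may be absorbed into a single term proportional to $\|f\|_\epsilon$, producing
$$
\|f(t)\|_\epsilon\le\|f_0\|_\epsilon+\int_0^t\|g_1(\tau)\|_\epsilon\,d\tau-C\Log h\int_0^t W(\tau)\|f(\tau)\|_\epsilon\,d\tau.
$$
The integral form of Gronwall's inequality, with integrating factor $h^{-C\int_0^t W}$, then yields exactly the stated
$$
\|f(t)\|_\epsilon\le\|f_0\|_\epsilon\,h^{-C\int_0^t W(\tau)d\tau}+\int_0^t h^{-C\int_\tau^t W(\tau')d\tau'}\|g_1(\tau)\|_\epsilon\,d\tau.
$$
The principal obstacle is the localized commutator estimate: one must show that transporting the $C^\epsilon$ norm costs only the \emph{local} Lipschitz size of $v$ near $\supp f$, rather than its possibly unbounded global size close to $\Sigma_t$. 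This forces the careful separation of frequency scales relative to $\delta_{0,t}(h)$ and the systematic use of the kernel decay of the Littlewood--Paley projectors, which is where the bulk of the technical work lies.
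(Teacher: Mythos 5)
First, a point of comparison: the paper never proves Proposition \ref{prop4}. It is stated as a ready-made tool, imported from Chemin's singular vortex patch theory \cite{Chemin2} and from \cite{Hassainia-Hmidi}, where the corresponding statement is established. So your proposal can only be measured against that standard argument, and in its outer structure it does mirror it: propagation of the supports by Lemma \ref{Lem6}, conversion of the hypothesis on $\|\nabla v(t)\|_{L(\Sigma_t)}$ via Definition \ref{D5} into the effective Lipschitz bound $\|\nabla v(t)\|_{L^\infty((\Sigma_t)^c_{\delta_{0,t}(h)})}\le-(\Log h)\,W(t)$, a Littlewood--Paley transport estimate with a commutator term, absorption of $g_2$ through its hypothesis, and a Gronwall closure whose algebra, $\Exp\big(-C\Log h\int_\tau^tW(\tau')d\tau'\big)=h^{-C\int_\tau^tW(\tau')d\tau'}$, is exactly right. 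All of these outer steps are correct as written.

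The genuine gap is the step you single out and then do not carry out: the bound $\sup_q2^{q\epsilon}\big\|[v\cdot\nabla,\Delta_q]f\big\|_{L^\infty}\lesssim-(\Log h)\,W(t)\,\|f(t)\|_\epsilon$, and the sketch you give for it would fail, for two concrete reasons. (i) The high-frequency argument by kernel decay produces a bound of the form $-(\Log h)W(t)\|f\|_{L^\infty}$ that is uniform in $q$ but carries no factor $2^{-q\epsilon}$: writing $[v\cdot\nabla,\Delta_q]f(x)=\int(\nabla h_q)(x-y)\cdot\big(v(x)-v(y)\big)f(y)\,dy$, with $h_q$ the kernel of $\Delta_q$, and using $|v(x)-v(y)|\lesssim-(\Log h)W(t)|x-y|$ on the support of the integrand, the weighted kernel mass $\int|\nabla h_q(z)||z|\,dz$ is of order one independently of $q$; after multiplication by $2^{q\epsilon}$ the resulting bound therefore blows up as $q\to\infty$ whenever $\epsilon>0$. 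The decay in $q$ cannot come from the spatial localization of $f$; it must come from its frequency localization, i.e.\ from a Bony/paraproduct decomposition of the commutator. (ii) Worse, the proposition is applied in the paper with a negative index (see the proof of Proposition \ref{prop6}, where it is used for $\partial_{X_{t,\lambda}}\omega\in C^{\epsilon-1}$ with sources only in $C^{\epsilon-1}$); for $\epsilon\le0$ the quantity $\|f\|_{L^\infty}$ is not controlled by $\|f\|_\epsilon$, so any argument that evaluates $f$ pointwise on its support --- which is what your localization step does --- is unavailable in precisely the case the paper needs. A complete proof must run the paradifferential decomposition of $[v\cdot\nabla,\Delta_q]$, where the dangerous factor is $\nabla S_{q'-1}v$: this is a global object which near $\Sigma_t$ is only bounded by $q'\|v\|_{LL}$, far larger than the admissible $-(\Log h)W(t)$ once $2^{-q'}\ll\delta_{0,t}(h)$. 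What rescues the estimate is that the blocks $\Delta_{q'}f$, though not compactly supported, decay like $C_M\big(2^{q'}d(x,\supp f)\big)^{-M}$ off $\supp f$, so the region near $\Sigma_t$ where $\nabla S_{q'-1}v$ is large only meets a negligible tail of $\Delta_{q'}f$. Balancing this rapid decay against that logarithmic growth, block by block and for both signs of $\epsilon$, is the actual content of Proposition \ref{prop4}; neither of the two mechanisms you invoke (kernel decay, frequency splitting at scale $\delta_{0,t}(h)$) suffices without it.
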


\hspace{0.5cm}It is well-known that the striated regularity of the vorticity in the framework of singular patches requires a specific admissible family of vector fields indexed by a truncation parameter which is given in detail in \cite{Chemin2}.
\begin{Definition}\label{D7} Let $\epsilon \in ]0,1[$, $\Sigma$ be a closed subset of $\RR^2$ and $\Theta = (\alpha,\beta,\gamma)$ be a triplet of real numbers. A family $\mathcal{X}=(X_{\lambda,h})_{(\lambda,h)\in \Lambda\times ]0,e^{-1}]}$ of vector fields is said a $\Sigma$-admissible of order $\Theta$ if and only $X_{\lambda,h}, \Div X_{\lambda,h}\in C^{\epsilon}$ for every $(\lambda,h)\in \Lambda\times ]0,e^{-1}]$ and the following properties hold.
\begin{eqnarray*}
&&\supp(X_{\lambda,h}) \subset \Sigma^{c}_{h^{\alpha}},\; \forall (\lambda,h)\in \Lambda\times ]0,e^{-1}],\\
&&\inf_{h \in ]0,e^{-1}]} h^{\gamma}I(\Sigma_h,\mathcal{X}_h) > 0,\\
&&\sup_{h \in ]0,e^{-1}]} h^{-\beta}N_{\epsilon}(\Sigma_h,\mathcal{X}_h) < \infty,
\end{eqnarray*}
with the notation: for $\eta \geq h^{\alpha},$
$$I(\Sigma_\eta,\mathcal{X}_h) \triangleq \inf_{x\in \Sigma^{c}_{\eta}}\sup_{\lambda\in \Lambda}|X_{\lambda,h}(x)| \qquad \text{and}\qquad N_{\epsilon}(\Sigma_\eta,\mathcal{X}_h) \triangleq \sup_{\lambda \in \Lambda} \frac{\check{\|}X_{\lambda,h}\|_\epsilon}{I(\Sigma_\eta,\mathcal{X}_h)}$$
and
\begin{equation}\label{Eq32}
\|v\|^{\epsilon+k}_{\Sigma_\eta,\mathcal{X}_h} \triangleq N_{\epsilon}(\Sigma_\eta,\mathcal{X}_h)\sum_{|\alpha'|\leq k} \|\partial^{\alpha'}v\|_{L^\infty} + \sup_{\lambda\in\Lambda}\frac{\|\partial_{X_\lambda}v\|_{\epsilon+k-1}}{I(\Sigma_\eta,\mathcal{X}_h)}.
\end{equation}
\end{Definition}
\hspace{0.5cm}At this stage, we are ready to state a general statement of the Theorem \ref{thm2} which in turn covers not only the singular patches but expands to the solutions of Yudovitch's kind. To be precise, we will prove.
\begin{Theorem}\label{Thm2-gen}
Let $(\EE,h,a)\in ]0,1[\times]0,e^{-1}[\times]1,2[$ and $\Sigma_0$ be a closed subset of $\RR^2$. For $D_0$ a bounded domain of $\RR^2$ whose boundary $\partial D_0$ is a Jordan curve of $C^{1+\EE}$ regularity outside $\Sigma_0$ and an initial velocity $v_0$ in free-divergence, with $\omega_0\in L^a\cap L^\infty$. Assume that $\theta_0\in W_{G}^{1,a}(\RR^2)\cap W_{G}^{1,\infty}$ and for $i\in\lbrace1,2\rbrace, $  $G_i(\theta_0)$ is a constant $(\Sigma_0)_r$. Let $\mathcal{X}=(X_{\lambda,h})_{(\lambda,h)\in \Lambda\times ]0,e^{-1}]}$ be a $\Sigma$-admissible family of order $\Theta=(\alpha_0,\beta_0,\gamma_0)$ so that the following assertion holds.
\begin{equation*}
\sup_{h\in]1,e^{-1}]}h^{-\alpha_0}\|\theta_0\|^{\EE+1}_{(\Sigma_0)_h,(\mathscr{X}_0)_h}+\sup_{h\in]1,e^{-1}]}h^{-\beta_0}\|\omega_0\|^{\EE}_{(\Sigma_0)_h,(\mathscr{X}_0)_h}<\infty.
\end{equation*}
Then, there exists $T>0$ such that (\ref{Eq1}) has a unique local solution
\begin{equation*}
(\omega,\theta)\in L^\infty\big([0,T];L^a(\RR^2) \cap L^\infty(\RR^2)\big) \times L^\infty\big([0,T];W^{1,a}(\RR^2) \cap W^{1,\infty}(\RR^2)\big).
\end{equation*}
Besides, the velocity vector field $v$ is Lipschitz outside the set $\Sigma_t$, with $\Sigma_t = \Psi(t,\Sigma_0)$ in the following way
\begin{equation*}
\sup_{h\in(0,e^{-1}]}\frac{\|\nabla v(t)\|_{L^{\infty}((\Sigma_t)^{c}_{h})}}{-\log h}\in L^\infty([0,T]).
\end{equation*}
\end{Theorem}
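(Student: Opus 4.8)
As in the regular case, the plan is to follow Chemin's scheme for singular patches, in the form revisited by Hassainia and Hmidi \cite{Hassainia-Hmidi}, adapted to the buoyancy source $G(\theta)$. I would first regularize the data by $v_{0,n}=S_nv_0$, $\theta_{0,n}=S_n\theta_0$ and work with the smooth solutions $(v_n,\theta_n)$ of the corresponding approximate system on a common time interval, thereby reducing the statement to $n$-uniform a priori bounds followed by a compactness argument. I would transport the $\Sigma_0$-admissible family $\mathcal{X}=(X_{\lambda,h})$ along the flow, setting $\mathcal{X}_t=(X_{t,\lambda,h})$ via \eqref{Eq11}, so that each component solves $\partial_t X_{t,\lambda,h}+v\cdot\nabla X_{t,\lambda,h}=\partial_{X_{t,\lambda,h}}v$. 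The decisive structural fact to record at the outset, using Lemma \ref{Lem6}, is that since $\theta$ and hence $G_i(\theta)$ are transported by the flow, the hypothesis that $G_i(\theta_0)$ is constant on a neighborhood of $\Sigma_0$ \emph{propagates}: $G_i(\theta(t))$ stays constant on a neighborhood of $\Sigma_t=\Psi(t,\Sigma_0)$, so $\nabla\big(G_i(\theta(t))\big)$ is supported in the truncated complement of the singular set. This is precisely what allows the support hypotheses of Proposition \ref{prop4} to be met.

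The heart of the argument is the control of the striated regularity $\|\omega(t)\|^\epsilon_{\Sigma_t,\mathcal{X}_t}$. Applying $\partial_{X_{t,\lambda,h}}$ to the vorticity equation in \eqref{Eqv-d} and using its commutation with the transport operator gives
\[
(\partial_t+v\cdot\nabla)\,\partial_{X_{t,\lambda,h}}\omega=\partial_{X_{t,\lambda,h}}\big(\partial_1(G_2(\theta))-\partial_2(G_1(\theta))\big).
\]
I would split the right-hand side as $g=g_1+g_2$ exactly as in the introductory remarks, with $g_1=\partial_1\big(\partial_{X_{t,\lambda,h}}G_2(\theta)\big)-\partial_2\big(\partial_{X_{t,\lambda,h}}G_1(\theta)\big)$ and $g_2$ collecting the commutators $[\partial_{X_{t,\lambda,h}},\partial_j]G_i(\theta)=-(\partial_jX_{t,\lambda,h})\cdot\nabla(G_i(\theta))$. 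By Corollary \ref{Cor1} and the boundedness of $G_i'(\theta)$, the commutator part obeys $\|g_2(t)\|_{\epsilon-1}\lesssim\|\nabla\theta(t)\|_{L^\infty}\check{\|}X_{t,\lambda,h}\|_{\epsilon}$, which matches the admissible weight $-\Log(h)W(t)$ of Proposition \ref{prop4}; the term $g_1$ is the good one, estimated in $C^\epsilon$ through the fact that $G_i(\theta)$ is transported, so that $\partial_{X_{t,\lambda,h}}G_i(\theta)$ itself solves a transport equation and is handled as in Lemma \ref{cor2}. Since both $\partial_{X_{0,\lambda,h}}\omega_0$ and $g$ are supported in the truncated complement of the singular set, Proposition \ref{prop4} then delivers the weighted bound $\|\partial_{X_{t,\lambda,h}}\omega(t)\|_{\epsilon-1}\le h^{-C\int_0^tW}(\cdots)$, with $W$ as defined there.

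With the striated estimate in hand, I would close the bound on the velocity. Feeding it into the logarithmic inequality of Theorem \ref{thm3}, applied on $(\Sigma_t)^c_h$, bounds $\|\nabla v(t)\|_{L^\infty((\Sigma_t)^c_h)}$, hence $\|\nabla v(t)\|_{L(\Sigma_t)}$, by $\|\omega(t)\|_{L^a\cap L^\infty}$ and a logarithm of the striated norm; combined with the $L^a\cap L^\infty$ conservation of $\omega$ from Proposition \ref{prop1} (adapted to the Yudovich class) and Lemma \ref{lem5}, this yields a closed nonlinear integral inequality for the quantity $W(t)$ of Proposition \ref{prop4}, which an Osgood/Gronwall argument resolves on a short interval $[0,T]$; this gives in particular the asserted membership of $\sup_{h}\|\nabla v(t)\|_{L^\infty((\Sigma_t)^c_h)}/(-\log h)$ in $L^\infty([0,T])$. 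I expect the main obstacle to lie exactly here: the truncation weight $h^{-C\int_0^tW}$ degenerates as $h\to0$, and one must ensure the buoyancy source does not worsen this degeneration. This is the whole point of assuming $G_i(\theta_0)$ constant near $\Sigma_0$---it forces $g$ to be supported away from $\Sigma_t$, so that Proposition \ref{prop4} applies with the \emph{same} exponent, rather than injecting an uncontrolled contribution of $\nabla v$ near the singular set.

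Finally, the $n$-uniform estimates together with a standard compactness argument produce a solution $(\omega,\theta)$ in the stated spaces, while uniqueness is obtained as in the regular case, working in a larger Yudovich-type space and using the $LL$ regularity of $v$ with Osgood's lemma. For the geometric conclusion, I would parametrize $\partial D_0$ away from $\Sigma_0$ by the integral curves of the tangential field $\nabla^\perp f_0$ carried by the admissible family, push them forward by $\Psi$, and use that $\partial_{X_{0,\lambda,h}}\Psi\in L^\infty([0,T],C^\epsilon)$ on the truncated complement to conclude that $\partial D_t=\Psi(t,\partial D_0)$ remains locally in $C^{1+\epsilon}$ outside $\Sigma_t$.
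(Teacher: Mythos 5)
Your overall architecture is the paper's: propagate the constancy of $G_i(\theta)$ near $\Sigma_t$ by transport, use the resulting support localization to invoke Proposition \ref{prop4}, run the commutator decomposition through Corollary \ref{Cor1} for the striated estimate, close via the logarithmic inequality of Theorem \ref{thm3} and Gronwall on $W(t)$, and finish with compactness plus a Yudovich-type uniqueness argument. However, there is one step where your plan, as written, breaks down: the regularization of the data. You mollify by $\theta_{0,n}=S_n\theta_0$, exactly as in the regular case. But $S_n$ is a Fourier cut-off, hence a global operator: even if $G_i(\theta_0)$ is constant on $(\Sigma_0)_r$, the function $G_i(S_n\theta_0)$ has no reason to be constant on any neighborhood of $\Sigma_0$. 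Since every a priori estimate in the singular setting (Propositions \ref{prop5}, \ref{prop6}, \ref{prop7}) is predicated on this constancy for the \emph{approximate} data --- it is what puts $\nabla\big(G_i(\theta_n)\big)$, $\partial_{X_{0,\lambda,h}}G_i(\theta_{0,n})$ and the source terms inside the truncated complements where Proposition \ref{prop4} operates --- your uniform bounds would not apply to $(v_n,\theta_n)$, and the compactness step collapses. The paper avoids this precisely by mollifying the buoyancy itself with a compactly supported kernel, $G_i^n(\theta_0)=\rho_n\star G_i(\theta_0)$ with $\supp\rho\subset B(0,1)$, which keeps $G_i^n(\theta_0)$ constant on $(\Sigma_0)_{r-1/n}\supset(\Sigma_0)_{r/2}$ for $n$ large, and then checks the commutator bound $\|[\partial_{X_{0,\lambda}},\rho_n\star]G_i(\theta_0)\|_{\EE}\lesssim\|X\|_{\EE}\|\nabla(G_i(\theta_0))\|_{L^\infty}$ to get $n$-uniform striated norms.

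A second, related slip: in your striated estimate you bound the commutator term by $\|\nabla\theta(t)\|_{L^\infty}\check{\|}X_{t,\lambda,h}\|_{\epsilon}$, invoking the boundedness of $G_i'(\theta)$. In the singular setting this quantity is not at your disposal: the hypothesis is only $\theta_0\in W_G^{1,a}\cap W_G^{1,\infty}$, i.e.\ $\nabla\big(G(\theta_0)\big)\in L^a\cap L^\infty$, and $\nabla\theta$ itself is neither assumed bounded initially nor propagated (the velocity is not Lipschitz near $\Sigma_t$, so no Gronwall bound on $\|\nabla\theta\|_{L^\infty}$ is available). You must keep the commutator in the form $-(\partial_jX_{t,\lambda,h})\cdot\nabla\big(G_i(\theta)\big)$ --- which your own formula gives --- and estimate $\|\nabla\big(G_i(\theta(t))\big)\|_{L^\infty}$ by Proposition \ref{prop5}, whose proof is itself the place where the support localization and the weight $r^{-C\int_0^tW}$ enter. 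This is not cosmetic: it is exactly the distinction between the regular and singular regimes, and it is why the theorem is stated in the spaces $W_G^{1,p}$ rather than $W^{1,p}$. With these two corrections (convolution mollification of $G_i(\theta_0)$, and systematic replacement of $\nabla\theta$ by $\nabla(G_i(\theta))$), your argument coincides with the paper's proof.
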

The proof of general version Theorem \ref{Thm2-gen} will be treated in detail in multiple subsections, we will start by some a priori estimates remarkably important in our analysis.

\subsection{A priori estimates for the buoyancy term and density}\label{S5}
In this paragraph, we are concerned with a priori estimate of the $L^p$ type for the buoyancy force $G(\theta)$, the density, and the vorticity. Keeping in mind that the velocity lacks its regularity through the time, so $G(\theta_0)$ is constant nearby the singularities is an axle assumption in the sense to get back what lack of regularity. Without this assumption the problem becomes unsolvable. First, we embark with the following estimates.
\begin{Proposition}\label{prop5}
Let $(v,\theta)$ be a smooth solution of (\ref{Eq1}) defined on $[0,T].$ with $v$ in divergence free. Let $\Sigma_0$ be a closed set of $\RR^{2}$ and for $i\in\lbrace1,2\rbrace$ assume that $G_i(\theta_0)$ is a constant in the set $(\Sigma_0)_{r}= \lbrace x \in \RR^{2} : d(x,\Sigma_0) \leq r \rbrace$ for some $r \in ]0,e^{-1}[$. Then for all $p \in [1,+\infty]$ and for any $t \leq T$ we have

\begin{equation}\label{prop5:1}
\|\nabla \big(G_i(\theta(t))\big)\|_{L^{p}} \lesssim \|\nabla \theta_{0}\|_{L^{p}}r^{-C\int_0^t W(\tau)d\tau}
\end{equation}


and
\begin{equation}\label{prop5:2}
 \|\omega(t)\|_{L^{p}} \lesssim \|\omega_{0}\|_{L^{p}} + t\|\nabla \theta_{0}\|_{L^{p}}r^{-C\int_0^t W(\tau)d\tau}.
\end{equation}
\end{Proposition}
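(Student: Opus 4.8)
The plan is to prove the two estimates \eqref{prop5:1} and \eqref{prop5:2} by combining the classical $L^p$-energy method for transport equations with the truncation machinery encoded in Proposition \ref{prop4}. The key observation that makes everything work is the hypothesis that $G_i(\theta_0)$ is constant on a neighborhood $(\Sigma_0)_r$ of the singular set: differentiating, $\nabla(G_i(\theta_0))$ vanishes on $(\Sigma_0)_r$, so $\supp\nabla(G_i(\theta_0))\subset(\Sigma_0)^c_r$. Moreover, since $G_i(\theta)$ is transported by the flow (because $\theta$ is and $\partial_t G_i(\theta)+v\cdot\nabla G_i(\theta)=G_i'(\theta)(\partial_t\theta+v\cdot\nabla\theta)=0$), this constancy is preserved along trajectories, and by Lemma \ref{Lem6} the support of $\nabla(G_i(\theta(t)))$ stays inside $(\Sigma_t)^c_{\delta_{0,t}(r)}$. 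This is precisely the support structure demanded by Proposition \ref{prop4}, and it is what allows us to exchange the loss of Lipschitz regularity of $v$ near $\Sigma_t$ for a harmless power of the truncation radius $r$.

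For \eqref{prop5:1}, first I would apply $\partial_j$ to the transport equation for $G_i(\theta)$, obtaining
\[
\partial_t\big(\partial_j G_i(\theta)\big)+v\cdot\nabla\big(\partial_j G_i(\theta)\big)=-(\partial_j v)\cdot\nabla\big(G_i(\theta)\big).
\]
The right-hand side is supported away from $\Sigma_t$, so on its support $\nabla v$ is controlled by $\|\nabla v\|_{L^\infty((\Sigma_t)^c_h)}\lesssim -\log h\,\|\nabla v\|_{L(\Sigma_t)}$; combining this with the quantity $W(t)$ and running the $L^p$ estimate gives an inequality of the form $\frac{d}{dt}\|\nabla G_i(\theta)\|_{L^p}\lesssim -\log(r)\,W(t)\|\nabla G_i(\theta)\|_{L^p}$ (after tracking the flow-deformation of the support radius through $\delta_{\tau,t}$). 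Gronwall's lemma then yields the factor $r^{-C\int_0^t W(\tau)d\tau}$. The cleanest route, given the way the paper is set up, is to apply Proposition \ref{prop4} directly with $\epsilon$ chosen appropriately and $g=g_2$ the full source term absorbed into the $W$-dependent bound, which immediately produces the stated power-of-$r$ growth without $g_1$.

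For \eqref{prop5:2}, I would use the vorticity--density formulation \eqref{Eqv-d}, namely
\[
\partial_t\omega+v\cdot\nabla\omega=\partial_1\big(G_2(\theta)\big)-\partial_2\big(G_1(\theta)\big),
\]
take the $L^p$ norm, and use $\Div v=0$ so that the transport part is an isometry on $L^p$. This gives
\[
\|\omega(t)\|_{L^p}\le\|\omega_0\|_{L^p}+\int_0^t\big(\|\nabla(G_2(\theta))\|_{L^p}+\|\nabla(G_1(\theta))\|_{L^p}\big)d\tau,
\]
and inserting the bound \eqref{prop5:1} just established, together with the monotonicity of $\tau\mapsto r^{-C\int_0^\tau W}$, produces the claimed $\|\omega_0\|_{L^p}+t\,\|\nabla\theta_0\|_{L^p}r^{-C\int_0^t W(\tau)d\tau}$.

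The main obstacle I anticipate is the bookkeeping around the truncation radius under the flow: one must verify that the support of the source term $\nabla(G_i(\theta(\tau)))$ stays inside the correct shrinking neighborhood $(\Sigma_\tau)^c_{\delta(\tau,r)}$ for all intermediate times, so that Proposition \ref{prop4} is legitimately applicable with the single fixed radius $r$, and that the exponential deformation $\delta_{\tau,t}(h)=h^{\Exp\int_\tau^t\|v\|_{LL}}$ gets correctly reabsorbed into the definition of $W(t)$ (which already carries the factor $\Exp\int_0^t\|v\|_{LL}$). The subtlety is that the a priori estimate is genuinely local and nonlinear in the truncation parameter, so one cannot simply integrate pointwise; the role of $W(t)$ is exactly to package the $L(\Sigma_t)$-seminorm of $\nabla v$ together with the flow distortion into a single Gronwall-ready quantity. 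Once the support-propagation is pinned down via Lemma \ref{Lem6}, the remaining steps are routine $L^p$ energy estimates.
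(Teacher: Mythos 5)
Your main argument is correct and is essentially the paper's own proof: differentiate the transport equation for $G_i(\theta)$, use the constancy of $G_i(\theta_0)$ on $(\Sigma_0)_r$ together with the transport structure and Lemma \ref{Lem6} to localize $\supp \nabla\big(G_i(\theta(\tau))\big)$ inside $(\Sigma_\tau)^c_{\delta_\tau(r)}$, bound $\|\nabla v(\tau)\|_{L^\infty}$ on that set by $-W(\tau)\log r$ via Definition \ref{D5}, close \eqref{prop5:1} with Gronwall, and then feed \eqref{prop5:1} into the classical $L^p$ estimate for the vorticity equation to get \eqref{prop5:2}. One caveat: drop your parenthetical claim that the ``cleanest route'' is to apply Proposition \ref{prop4} directly --- that proposition is stated for H\"older norms $C^\epsilon$ with $\epsilon\in\,]-1,1[$ and yields no $L^p$ information for general $p\in[1,\infty]$, so it cannot replace the direct $L^p$ energy/Gronwall computation that your first description (and the paper) actually carries out.
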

\begin{proof}

Let $i\in\lbrace1,2\rbrace,$ since $G_i(\theta)$ is a solution for the following transport equation,
\begin{equation}\label{Lun-0}
\left\{ \begin{array}{ll}
 \partial_{t}G_i(\theta) +v\cdot\nabla G_i(\theta)=0,\\
 G(\theta)_{|_{t=0}}=G(\theta_0).
\end{array} \right.
\end{equation}
So, by applying the partial derivative $\partial_j$, one has

\begin{equation}\label{Eq34}
\partial_t \partial_j \big(G_i(\theta)\big) + v\cdot \nabla (\partial_j \big(G_i(\theta)\big)) = -(\partial_j v \cdot \nabla) G_i(\theta).
\end{equation}
Or, an integration by parts and the condition $\Div v=0$ provide for $p \in [1,+\infty]$, 
\begin{eqnarray}\label{Eq35}
\|\partial_j \big(G_i(\theta(t))\big)\|_{L^{p}} &\leq & \|\partial_j      \big(G_i(\theta_0)\big)\|_{L^{p}} + \int_0^t \|\partial_j v\cdot\nabla\big(G_i(\theta(\tau))\big)\|_{L^p}d\tau\\ \nonumber
\nonumber&\leq & \|G'_i(\theta_0) \partial_j \theta_0\|_{L^p} + \int_0^t \|\partial_j v\cdot\nabla\big(G_i(\theta(\tau))\big)\|_{L^p}d\tau\\ \nonumber
\nonumber&\leq & \|G'_i(\theta_0)\|_{L^\infty} \|\partial_j \theta_0\|_{L^p} + \int_0^t \|\partial_j v\cdot\nabla\big(G_i(\theta(\tau))\big)\|_{L^p}d\tau\\
\nonumber&\lesssim & \|\partial_j \theta_0\|_{L^p} + \int_0^t \|\partial_j v\cdot\nabla\big(G_i(\theta(\tau))\big)\|_{L^p}d\tau.
\end{eqnarray}
In the last line we have used the fact $\|G'_i(\theta_0)\|_{L^\infty}$ is bounded, see \eqref{G'-bounded}. 

\hspace{0.5cm}To treat the term $\|\partial_j v\cdot\nabla\big(G_i(\theta(\tau))\big)\|_{L^p}$. On the one hand, we employ for $i\in\lbrace1,2\rbrace$ that $G_i\big(\theta(\tau,x)\big) =G_i\big(\theta_0(\Psi^{-1}(\tau,x))\big)$. On the other hand, $G(\theta_0)$ is constant over the set $ (\Sigma_0)_{r}$, so $G_i(\theta(\tau))$ it is also over the transported set $\Psi(\tau,(\Sigma_0)_{r}) $, one deduce that $\nabla\big(G_i(\theta(\tau))\big)= 0 $ in $\Psi(\tau,(\Sigma_0)_{r})$. We further get
$$ \supp \nabla G_i(\theta(\tau)) \subset \big(\Psi(\tau,(\Sigma_0)_{r})\big)^c = \Psi(\tau,(\Sigma_0)_{r}^c).$$
Owing to Lemma \ref{Lem6}, we infer that
$$\supp \nabla \big(G_i(\theta(\tau))\big) \subset  (\Sigma_t)^{c}_{\delta_{(\tau,r)}},\quad \delta_{(\tau,r)}= r^{\displaystyle{e^{\int_0^{\tau} \|v(\tau')\|_{LL}d\tau'}}}.$$
Therefore we find
$$\|\partial_j v\cdot\nabla\big(G_i(\theta(\tau))\big)\|_{L^p} \leq \|\nabla v(\tau)\|_{L^{\infty}\big((\Sigma_\tau)^{c}_{\delta_{\tau(r)}}\big)} \|\nabla\big(G_i(\theta(\tau))\big)\|_{L^p}.$$
By means of Definition \ref{D5}, it holds
\begin{eqnarray*}
\|\nabla v(\tau)\|_{L^{\infty}\big((\Sigma_\tau)^{c}_{\delta_{\tau(r)}}\big)} &\leq & -\|\nabla v(\tau)\|_{L^{\infty}(\Sigma_\tau)}\Log\delta_{\tau(r)}\\
&\leq & -(\Log r)\|\nabla v(\tau)\|_{L^{\infty}(\Sigma_\tau)}\displaystyle{e^{\int_0^{\tau} \|v(\tau')\|_{LL}d\tau'}}\\
&\leq & \displaystyle{-W(\tau)\Log(r)}.
\end{eqnarray*}
As a consequence
\begin{equation*}
\|\partial_j v\cdot\nabla\big(G_i(\theta(\tau))\big)\|_{L^p}\le \displaystyle{-W(\tau)\Log(r)}\|\nabla\big(G_i(\theta(\tau))\big)\|_{L^p}.
\end{equation*}
Plug the last estimate in \eqref{Eq35} to obtain
$$\|\nabla \big(G_i(\theta(t))\big)\|_{L^{p}} \lesssim \|\nabla \theta_{0}\|_{L^{p}}-C\Log r \int_0^t \|\nabla \big(G_i(\theta(\tau))\big)\|_{L^{p}}W(\tau)d\tau.$$
Via Gronwall's inequality we may write
\begin{equation}\label{o1}
\|\nabla \big(G_i(\theta(t))\big)\|_{L^{p}} \lesssim \|\nabla \theta_{0}\|_{L^{p}}r^{-C\int_0^t W(\tau)d\tau}.
\end{equation}
Let us move to estimate \eqref{prop5:2}. For $p\in[1,\infty]$, the classical $L^p-$estimate and maximum principle for $\omega-$ equation leads to
$$\|\omega(t)\|_{L^p} \leq \|\omega_{0}\|_{L^p}+ \int_0^t \sum_{i=1}^2\|\nabla \big(G_i(\theta(\tau))\big)\|_{L^{p}}.$$
In view of \eqref{o1} we write 

$$\|\omega(t)\|_{L^{p}} \lesssim \|\omega_{0}\|_{L^{p}} + t\|\nabla \theta_{0}\|_{L^{p}}r^{-C\int_0^t W(\tau)d\tau}.$$

Proposition \ref{prop5} is then proved.
\end{proof}
\subsection{A priori estimates for the admissible family and striated regularity of the voticity}The following properties are standard about the family of a vector field supported far from a closed set in the plane and arises in the striated regularity of the vorticity. Specifically, we have.
\begin{Proposition}\label{prop6}
Let $(\EE,a)\in]0,1[\times]1,\infty[$, $\Sigma_0$ be closed subset of $\RR^2$ and $X_0$ be a vector field of class such that $X_0, \Div X_0 \in C^\epsilon$ which satisfying $ \supp X_0 \subset (\Sigma_0)^c_h$. Let $(v,\theta)$ smooth solution of the system \ref{Eq1} defined on time interval $[0,T]$ with inital data $(v_0,\theta_0)$. Assume that for $i\in\lbrace 1,2 \rbrace, $ $G_i(\theta_0) $ is a constant in the set $(\Sigma_0)_r$ and $X_t$ satisfies the following system
\begin{equation}
\left\{ \begin{array}{ll}\label{Eq36}
 \partial_{t}X_t +v\cdot\nabla X_t= \partial_{X_t}v,\\
 X_{|_{t=0}}=X_0.
\end{array} \right.
\end{equation}
Then
\begin{equation}\label{prop6:1}
\supp X_t \subset (\Sigma_t)^c_{\delta_t(h)}.
\end{equation}
\begin{equation}\label{Eqdiv}
\|\Div X_t\|_\epsilon \leq \|\Div X_0\|_\epsilon h^{-C\int_0^t W(\tau)d\tau},
\end{equation}
\begin{eqnarray}\label{prop6:2}
\nonumber\check{\|}X_{t}\|_{\epsilon} + \|\partial_{X_{t,\lambda}}\omega\|_{\epsilon-1} &\leq & C\big(\check{\|}X_{0,\lambda}\|_{\epsilon}+ \|\partial_{X_{0,\lambda}}\omega_{0}\|_{\epsilon-1}+\|\theta_0\|_{\epsilon}\|\partial_{X_{0,\lambda}}\theta_0\|_{\epsilon}\big) e^{Ct} e^{-C\int_0^t W(\tau)d\tau}  \\
&\times&e^{t\|\nabla \theta_{0}\|_{L^{\infty}} r^{-C\int_0^t W(\tau)d\tau}}.
\end{eqnarray}
\begin{proof}
For \eqref{prop6:1}, using the fact $\supp X_0\subset (\Sigma_0)_{h}^{c}$, but $X_t(x)=X_{0}(\Psi(t, \Psi^{-1}(t,x)))$ allows us to write $\supp X_t\subset\Psi(t,(\Sigma_0)_{h}^c)$. Then thanks to Lemma \ref{Lem6}, we deduce that $\supp X_t\subset(\Sigma_t)^c_{\delta_t(h)}$.

\hspace{0.5cm}To estimate \eqref{Eqdiv} we must apply the operator "$\Div$" to \eqref{Eq36}, so that $\Div v=0$ yields
$$(\partial_t + v\cdot\nabla)\Div X_t = 0.$$
In view Proposition \ref{prop4} we find that
$$\|\Div X_t\|_\epsilon \leq \|\Div X_0\|_\epsilon h^{-C\int_0^t W(\tau)d\tau}.$$
\hspace{0.5cm}Come back to \eqref{prop6:2}. 
The fact that $\partial_{X_t} v(t) = g_1(t) + g_2(t)$, so Biot-Savart law combined, an intense paradifferential calculus and the definition of $\delta_{t}(h)$, see \cite{Chemin2} allow us to write
$$\|g_1(t)\|_{\epsilon} \leq C\|\partial_{X_t}\omega(t)\|_{\epsilon-1} + C\|\Div X_t\|_\epsilon\|\omega(t)\|_{L^\infty}$$
and
$$\|g_2(t)\|_{\epsilon} \leq -C\|X_t\|_\epsilon W(t)\Log h.$$
Gathering the last tow estimates, then a new use of Proposition \ref{prop4} for \eqref{Eq36} combined with \eqref{Eqdiv} we immediately obtain Due to its ubiquity in
\begin{eqnarray*}
\|X_t\|_\epsilon &\lesssim& \|X_0\|_\epsilon h^{-C\int_0^t W(\tau)d\tau} + \|\Div X_0\|_\epsilon\int_0^t \|\omega(\tau)\|_{L^{\infty}} h^{-C\int_\tau^t W(\tau')d\tau'}d\tau\\
&&+ \int_0^t \|\partial_{X_\tau}\omega(\tau)\|_{\epsilon-1} h^{-C\int_\tau^t W(\tau')d\tau'}d\tau.
\end{eqnarray*}
Exploring the definition of $W$ to write $\int_0^t \|\omega(\tau)\|_{L^\infty} d\tau \leq h^{-C \int_0^t W(\tau)d\tau}$, the previous estimate becomes
\begin{eqnarray}\label{Lun-1}
\|X_t\|_\epsilon  &\lesssim& \|X_0\|_\epsilon h^{-C\int_0^t W(\tau)d\tau} + \|\Div X_0\|_\epsilon h^{-C\int_0^t W(\tau)d\tau}\int_0^t\|\omega(\tau)\|_{L^{\infty}}d\tau\\
\nonumber&&+ \int_0^t \|\partial_{X_\tau}\omega(\tau)\|_{\epsilon-1} h^{-C\int_\tau^t W(\tau')d\tau'}d\tau.
\end{eqnarray}

\hspace{0.5cm}Finally, we treat carefully the term $\|\partial_{X_{t,\lambda}}\omega\|_{\epsilon-1}$. For this purpose, we apply the directional derivative $\partial_{X_{t,\lambda}}$ to $\omega-$equation one may write again
$$ (\partial_{t}+v\cdot\nabla)\partial_{X_{t,\lambda}}\omega = \partial_{X_{t,\lambda}}(\partial_{1}(G_{2}(\theta))-\partial_{2}(G_{1}(\theta))),$$
again Proposition \ref{prop4} leads
\begin{equation}\label{Lun-2}
\nonumber\|\partial_{X_t}\omega(t)\|_{\epsilon-1} \lesssim \|\partial_{X_0}\omega_0\|_{\epsilon-1} h^{-C \int_0^t W(\tau)d\tau}+\sum_{i=1}^{2}\int_0^t\|\partial_{X_{t,\lambda}}(\partial_{1}(G_{i}(\theta))\|_{\epsilon-1}h^{-C \int_0^t W(\tau)d\tau}d\tau.
\end{equation}
For the term $\|\partial_{X_{t,\lambda}}(\partial_{1}(G_{i}(\theta))\|_{\epsilon-1}$, let $i\in\{1,2\}$, we use the fact that
$$\partial_{X_{t,\lambda}}(\partial_{1}(G_{i}(\theta))= \partial_1\big(\partial_{X_{t,\lambda}}G_i(\theta)\big)+\big[\partial_{X_{t,\lambda}},\partial_1\big]G_i(\theta).$$
Therefore
\begin{equation*}
\|\partial_{X_{t,\lambda}}(\partial_{1}(G_{i}(\theta))\|_{\epsilon-1}  \leq \|\partial_{1}(\partial_{X_{t,\lambda}}(G_{i}(\theta))\|_{\epsilon-1} + \|\big[\partial_{X_{t,\lambda}},\partial_1\big]G_i(\theta)\|_{\epsilon-1}.
\end{equation*}
On the other hand from \eqref{com:1} we have $\big[\partial_X,\partial_1\big]G_i(\theta)= -(\partial_1 X_{t,\lambda})\cdot\nabla G_i(\theta)$. Accordingly finds out
$$\|\partial_{X_{t,\lambda}}(\partial_{1}(G_{i}(\theta))\|_{\epsilon-1} \leq \|\partial_{X_{t,\lambda}}(G_{i}(\theta))\|_{\epsilon} + \|\partial_{1}(X_{t,\lambda})\cdot\nabla(G_{2}(\theta))\|_{\epsilon-1}.$$
Since $G_i\circ\theta \in L^\infty$, the Corrolary \ref{Cor1} provides

\begin{equation*}\label{Eq23}
\|\partial_{X_{t,\lambda}}(\partial_{1}(G_{i}(\theta))\|_{\epsilon-1} \lesssim
\|\partial_{X_{t,\lambda}}(G_{i}(\theta))\|_{\epsilon} + \|\nabla\big(G_i(\theta(\tau)\big)\|_{L^{\infty}}\check{\|}X_{t,\lambda}\|_{\epsilon},\quad 1\le i\le 2.
\end{equation*}
Insert the last estimate in \eqref{Lun-1}, we end up with
\begin{eqnarray}\label{Lun-3}
\nonumber\|\partial_{X_{t,\lambda}}\omega(t)\|_{\epsilon-1} &\lesssim & \|\partial_{X_0}\omega_{0}\|_{\epsilon-1} h^{-C \int_0^t W(\tau)d\tau} + \sum_{i=1}^{2}\int_0^t \|\nabla (G_i(\theta)) \|_{L^{\infty}}\check{\|}X_{\tau,\lambda}\|_{\epsilon}h^{-C \int_\tau^t W(\tau')d\tau'}d\tau\\
&+& \sum_{i=1}^{2}\int_0^t \|\partial_{X_{\tau,\lambda}}G_i(\theta)\big)\|_{\epsilon} h^{-C \int_\tau^t W(\tau')d\tau'}d\tau.
\end{eqnarray}
The fact that $G_i(\theta_0)$ is constant in the set $(\Sigma_0)_r$, then $\partial_{X_{0,\lambda}}G_i(\theta_0)\equiv 0$ in the set $(\Sigma_0)_{r}$ this gives
\begin{equation}\label{Lun-30} 
\supp \big(\partial_{X_{0,\lambda}}(G_i(\theta_0))\big) \subset (\Sigma_0)^{c}_r.
\end{equation}
On the other hand, $G(\theta)$ satisfies a transport equation of the type \eqref{Lun-0}, so with the aid that $\partial_{X_{t,\lambda}}$ commutes with the transport operator $\partial_t+v\cdot\nabla$, that is
\begin{equation}\label{Lun-300}
\left\{ \begin{array}{ll}
 \partial_{t}\partial_{X_{t,\lambda}}G(\theta) +v\cdot\nabla \partial_{X_{t,\lambda}}G(\theta)=0,\\
 \partial_{X_{t,\lambda}}G(\theta)_{|_{t=0}}=\partial_{X_{0,\lambda}}G(\theta_0).
\end{array} \right.
\end{equation}
In account \eqref{Lun-300}, we note that Proposition \ref{prop4} for \eqref{Lun-30} affords us to write 
$$\|\partial_{X_{t,\lambda}}\big(G_{i}(\theta)\big)\|_{\epsilon}\leq \|\partial_{X_{0,\lambda}}\big(G_{i}(\theta_0)\big)\|_{\epsilon}h^{-C \int_0^t W(\tau)d\tau}.$$
Thus we get from \eqref{Eqalg}
$$\|\partial_{X_{t,\lambda}}\big(G_{i}(\theta)\big)\|_{\epsilon} \lesssim \|\theta_0\|_{\epsilon}\|\partial_{X_{0,\lambda}}\theta_0\|_{\epsilon} h^{-C \int_0^t W(\tau)d\tau}.$$
Consequently, \eqref{Lun-3} takes the form
\begin{eqnarray}\label{Lun-00}
\|\partial_{X_{t,\lambda}}\omega\|_{\epsilon-1} &\lesssim & h^{-C \int_0^t W(\tau)d\tau}(\|\partial_{X_{0,\lambda}}\omega_{0}\|_{\epsilon-1} + \|\theta_0\|_{\epsilon}\|\partial_{X_{0,\lambda}}\theta_0\|_{\epsilon}t)\\
 \nonumber&+&  \sum_{i=1}^{2}\int_0^t\|\nabla (G_i(\theta)) \|_{L^{\infty}}\check{\|}X_{t,\lambda}\|_{\epsilon}h^{-C \int_{\tau'}^\tau W(\tau')d\tau'}d\tau).
\end{eqnarray}
For the sake of brevity, set $$\mathfrak{C}(t) \triangleq \big(\|\partial_{X_{t,\lambda}}\omega\|_{\epsilon-1}+\check{\|}X_{t,\lambda}\|_{\epsilon}\big)h^{C \int_0^t W(\tau)d\tau}.$$
Hence \eqref{Lun-1} and \eqref{Lun-00} permit us to write 

$$ \mathfrak{C}(t) \lesssim \mathfrak{C}(0) + \|\theta_0\|_{\epsilon}\|\partial_{X_{0,\lambda}}\theta_0\|_{\epsilon}t +\int_0^t \bigg(\sum_{i=1}^{2}\|\nabla (G_i(\theta)) \|_{L^{\infty}}+1 \bigg)\Phi(\tau)d\tau.$$
By Gronwall's inequality we get
$$ \mathfrak{C}(t) \lesssim \big(\mathfrak{C}(0) + \|\theta_0\|_{\epsilon}\|\partial_{X_{0,\lambda}}\theta_0\|_{\epsilon}\big)\Exp\bigg(C  \sum_{i=1}^{2}\int_0^t\|\nabla (G_i(\theta)) \|_{L^{\infty}} d\tau +Ct\bigg).$$
According to Proposition \ref{prop5}, we conclude that
\begin{eqnarray*}
\nonumber\check{\|}X_{t}\|_{\epsilon} + \|\partial_{X_{t,\lambda}}\omega\|_{\epsilon-1} &\leq & C\big(\check{\|}X_{0,\lambda}\|_{\epsilon}+ \|\partial_{X_{0,\lambda}}\omega_{0}\|_{\epsilon-1}+\|\theta_0\|_{\epsilon}\|\partial_{X_{0,\lambda}}\theta_0\|_{\epsilon}\big) e^{Ct} e^{-C\int_0^t W(\tau)d\tau}  \\
&\times&e^{t\|\nabla \theta_{0}\|_{L^{\infty}} r^{-C\int_0^t W(\tau)d\tau}},
\end{eqnarray*}
and this ends the proof.

\end{proof}
\end{Proposition}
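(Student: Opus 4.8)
The plan is to mirror the scheme of Lemma \ref{cor2} from the regular case, but to replace Proposition \ref{lem3} everywhere by Proposition \ref{prop4} so that the degeneracy of the flow near $\Sigma_t$ is absorbed into factors of the form $h^{-C\int_0^t W}$. I would prove the three assertions in the order stated. The support claim \eqref{prop6:1} is purely geometric: since $X_t$ is the push-forward of $X_0$ by $\Psi(t,\cdot)$ via \eqref{Eq11}, its support satisfies $\supp X_t\subset\Psi(t,\supp X_0)\subset\Psi\big(t,(\Sigma_0)^c_h\big)$, and Lemma \ref{Lem6} applied with $A_0=\Sigma_0$ gives directly $\Psi\big(t,(\Sigma_0)^c_h\big)\subset(\Sigma_t)^c_{\delta_t(h)}$. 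For \eqref{Eqdiv}, applying $\Div$ to \eqref{Eq36} and using $\Div v=0$ yields the homogeneous transport equation $(\partial_t+v\cdot\nabla)\Div X_t=0$; since $\supp\Div X_0\subset(\Sigma_0)^c_h$, Proposition \ref{prop4} with vanishing source (so $g_1=g_2=0$, and the structural hypothesis is trivially met) gives the claimed bound at once.

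The substance is in \eqref{prop6:2}, where I would estimate $\check{\|}X_t\|_\epsilon$ and $\|\partial_{X_t}\omega\|_{\epsilon-1}$ simultaneously. For $X_t$ I would feed \eqref{Eq36} into Proposition \ref{prop4} using the Chemin splitting $\partial_{X_t}v=g_1+g_2$ of \cite{Chemin2}: via Biot--Savart and paradifferential calculus one has $\|g_1\|_\epsilon\lesssim\|\partial_{X_t}\omega\|_{\epsilon-1}+\|\Div X_t\|_\epsilon\|\omega\|_{L^\infty}$, while the degenerate part obeys $\|g_2\|_\epsilon\leq-C\|X_t\|_\epsilon W(t)\Log h$, which is exactly the condition Proposition \ref{prop4} demands of $g_2$. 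After bounding $\int_0^t\|\omega(\tau)\|_{L^\infty}d\tau$ by $h^{-C\int_0^t W}$ through the definition of $W$, this produces \eqref{Lun-1}. In parallel, since $\partial_{X_t}$ commutes with $\partial_t+v\cdot\nabla$, the striated vorticity solves $(\partial_t+v\cdot\nabla)\partial_{X_t}\omega=\partial_{X_t}\big(\partial_1 G_2(\theta)-\partial_2 G_1(\theta)\big)$, and I would apply Proposition \ref{prop4} again. The forcing is handled by the rewriting $\partial_{X_t}(\partial_1 G_i(\theta))=\partial_1\big(\partial_{X_t}G_i(\theta)\big)+[\partial_{X_t},\partial_1]G_i(\theta)$, the commutator being controlled through \eqref{com:1} and Corollary \ref{Cor1} by $\|\nabla(G_i(\theta))\|_{L^\infty}\check{\|}X_t\|_\epsilon$.

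The crucial and most delicate step, and the one where the constancy hypothesis is indispensable, is the control of $\|\partial_{X_t}G_i(\theta)\|_\epsilon$. Because $G_i(\theta_0)$ is constant on $(\Sigma_0)_r$, one has $\partial_{X_0}G_i(\theta_0)\equiv0$ there, hence $\supp\big(\partial_{X_0}G_i(\theta_0)\big)\subset(\Sigma_0)^c_r$; combined with the fact that $\partial_{X_t}G_i(\theta)$ is itself transported by the flow via \eqref{Lun-300}, Proposition \ref{prop4} applies with vanishing source and gives $\|\partial_{X_t}G_i(\theta)\|_\epsilon\leq\|\partial_{X_0}G_i(\theta_0)\|_\epsilon\, h^{-C\int_0^t W}\lesssim\|\theta_0\|_\epsilon\|\partial_{X_0}\theta_0\|_\epsilon\, h^{-C\int_0^t W}$, the last inequality being \eqref{Eqalg} through the algebra property of $C^\epsilon$ and Theorem \ref{thm2.8}. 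I expect this to be the main obstacle: without the support information furnished by the constancy assumption, the logarithmically degenerate velocity near $\Sigma_t$ would prevent any closed control of $\partial_{X_t}G_i(\theta)$, since $G_i(\theta)$ is merely transported and carries no smoothing.

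To conclude, I would collect the two bounds into the quantity $\mathfrak{C}(t)\triangleq\big(\|\partial_{X_t}\omega\|_{\epsilon-1}+\check{\|}X_t\|_\epsilon\big)h^{C\int_0^t W}$, which linearizes the coupled system \eqref{Lun-1}--\eqref{Lun-00} into a Gronwall inequality whose integrating factor is governed by $\sum_{i=1}^2\|\nabla(G_i(\theta))\|_{L^\infty}+1$. Applying Gronwall and then substituting the $L^\infty$ bound $\|\nabla(G_i(\theta))\|_{L^\infty}\lesssim\|\nabla\theta_0\|_{L^\infty}\,r^{-C\int_0^t W}$ from Proposition \ref{prop5} recovers the stated form of \eqref{prop6:2}, with the two scales $h$ and $r$ entering respectively through the support of $X_0$ and through the region of constancy of $G_i(\theta_0)$.
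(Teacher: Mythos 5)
Your proposal is correct and follows essentially the same route as the paper's proof: the push-forward/Lemma \ref{Lem6} argument for the support, the homogeneous transport equation plus Proposition \ref{prop4} for $\Div X_t$, the Chemin splitting $\partial_{X_t}v=g_1+g_2$ together with the commutator decomposition and Corollary \ref{Cor1} for the source term, the constancy of $G_i(\theta_0)$ on $(\Sigma_0)_r$ to localize $\supp\,\partial_{X_0}G_i(\theta_0)$ and close the bound on $\|\partial_{X_t}G_i(\theta)\|_\epsilon$ via \eqref{Eqalg}, and finally the Gronwall argument on $\mathfrak{C}(t)$ combined with Proposition \ref{prop5}. No gaps to report.
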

\subsection{Lipschitz norm of the velocity outside the singular set}
\hspace{0.5cm}This section concerns by bounding the Lipschitz norm of the velocity outside the singular set $\Psi(t,\Sigma_0)$ by exploring the striated regularity of its vorticity. This is regarded as the centerpiece in the proof of Theorem \ref{Thm2-gen}. 
\begin{Proposition}\label{prop7}
Given $(\EE,r,a)\in]0,1[\times]0,e^{-1}[\times]1,\infty[$ and $\Sigma_0$ be a closed set of $\RR^2$. Let $\mathcal{X}_0=\big(X_{0,\lambda,h}\big)_{(\lambda,h) \in \Lambda\times]0, e^{-1}[}$ be a $\Sigma_0$-admissible family of order $\Theta_0 = (\alpha,\beta_0,\gamma_0)$ and $(v,\theta)$ be a smooth solution of the system \eqref{Eq1} defined on a time interval $[0,T^{\star}[$. Assume that $\omega_0 , \nabla \theta_0 \in L^{a}$, $G_i(\theta_0), i\in\lbrace1,2\rbrace$ is constant over $(\Sigma_0)_r$, and
\begin{equation*}
\sup_{0<h\leq e^{-1}} h^{-\beta_0}\|\theta_0\|^{\epsilon+1}_{(\Sigma_0)_h,(\mathcal{X}_{0})_h} +  \sup_{0<h\leq e^{-1}} h^{-\beta_0}\|\omega_0\|^{\epsilon}_{(\Sigma_0)_h,(\mathcal{X}_{0})_h} < \infty.
\end{equation*}
Then there exists $T$ , $0<T<T^{\star}$ such that
$$ \|\nabla v(t)\|_{L(\Sigma(t))} \in L^{\infty}\big([0,T]\big).$$
\end{Proposition}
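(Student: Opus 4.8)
The plan is to run a continuity (bootstrap) argument on the self-referential quantity $W(t)$, using the singular logarithmic estimate to convert the power-of-$h$ growth of the striated norm into a dependence that is merely linear in $\int_0^t W$, and then to close a Gronwall inequality on a short time interval. First I would invoke the singular version of Chemin's logarithmic estimate (the analogue of Theorem \ref{thm3} adapted to the truncated family $\mathcal{X}_t$, see \cite{Chemin2}), which for each $h\in(0,e^{-1}]$ has the schematic form
\[
\|\nabla v(t)\|_{L^\infty((\Sigma_t)^{c}_{h})} \lesssim \|\omega(t)\|_{L^a\cap L^\infty}\Big(1 + \Log\Big(e + \frac{\|\omega(t)\|^{\epsilon}_{(\Sigma_t)_h,(\mathcal{X}_t)_h}}{h^{\gamma_0}\|\omega(t)\|_{L^\infty}}\Big)\Big).
\]
The striated norm on the right is then controlled by Proposition \ref{prop6}: its dominant growth is of the form $h^{-C\int_0^t W(\tau)d\tau}\,e^{t\|\nabla\theta_0\|_{L^\infty}r^{-C\int_0^t W(\tau)d\tau}}$ times data depending only on the admissibility constants of $\mathcal{X}_0$ and on $\omega_0,\theta_0$.

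The key mechanism I would exploit is that inserting this bound into the logarithm and using $\Log(h^{-A})=A(-\Log h)$ turns the factor $h^{-C\int_0^t W}$ into $C(-\Log h)\int_0^t W$; after dividing by $-\Log h$ and taking the supremum over $h$ as in Definition \ref{D5}, the power of $h$ collapses to exactly $C\int_0^t W(\tau)d\tau$. The residual factors $e^{Ct}$ and $e^{t\|\nabla\theta_0\|_{L^\infty}r^{-C\int_0^t W}}$ contribute terms proportional to $t$ and to $t\,r^{-C\int_0^t W}$, both small for short time. Feeding in the a priori bounds of Proposition \ref{prop5}, namely $\|\omega(t)\|_{L^a\cap L^\infty}\lesssim \|\omega_0\|_{L^a\cap L^\infty}+t\|\nabla\theta_0\|_{L^a\cap L^\infty}r^{-C\int_0^t W}$, together with Lemma \ref{lem5} giving $\|v(\tau)\|_{LL}\lesssim \|\omega(\tau)\|_{L^a\cap L^\infty}$, I obtain an inequality of the form
\[
\|\nabla v(t)\|_{L(\Sigma_t)} \lesssim \|\omega(t)\|_{L^a\cap L^\infty}\Big(1 + \int_0^t W(\tau)\,d\tau + t\,r^{-C\int_0^t W(\tau)d\tau}\Big),
\]
which, recalling $W(t)=(\|\nabla v(t)\|_{L(\Sigma_t)}+\|\omega(t)\|_{L^a\cap L^\infty})\Exp(\int_0^t\|v(\tau)\|_{LL}d\tau)$, is a closed integral inequality for $W$ alone.

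To finish, I would introduce the threshold time $T^\star_0=\sup\{t\le T^\star:\int_0^t W(\tau)d\tau\le 1\}$, chosen so that on $[0,T^\star_0]$ the exponential correction factors $r^{-C\int_0^t W}$ and $e^{t\|\nabla\theta_0\|_{L^\infty}r^{-C\int W}}$ are bounded by absolute constants and $\|\omega(t)\|_{L^a\cap L^\infty}\lesssim\|\omega_0\|_{L^a\cap L^\infty}+t$. With $C_0$ denoting the constant governing the initial data, the displayed inequality then collapses to
\[
W(t) \lesssim C_0\Big(1 + \int_0^t W(\tau)\,d\tau\Big), \qquad\text{hence}\qquad W(t)\lesssim C_0\,e^{C_0 t}.
\]
Integrating gives $\int_0^t W\lesssim e^{C_0 t}-1$, which stays below the threshold $1$ provided $t$ is smaller than an explicit $T>0$ (the analogue of the lifespan (T) in Proposition \ref{Lip-v}); this forces $T^\star_0\ge T$, and on $[0,T]$ both $W$ and $\|\nabla v(t)\|_{L(\Sigma_t)}$ are bounded, which is the assertion.

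The main obstacle is the nonlinear feedback created by the exponents $h^{-C\int_0^t W}$ and $r^{-C\int_0^t W}$: a priori they grow super-linearly in $W$, which is itself built from the very quantity $\|\nabla v\|_{L(\Sigma_t)}$ we are bounding. The logarithmic estimate is precisely what tames the $h$-dependent factor, since the logarithm converts the power of $h$ into a quantity linear in $-\Log h$ that is absorbed by the normalization defining the $L(\Sigma_t)$ norm; this is the structural reason the $L(\Sigma_t)$ framework (rather than $L^\infty$) is forced upon us near the singular set. The $r$-dependent factor arising from the density/buoyancy estimate of Proposition \ref{prop5} cannot be absorbed in this way and must instead be dominated by restricting to small time, which is exactly where the short lifespan $T$ enters. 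A secondary technical point is to check that all constants produced by Proposition \ref{prop6} and the logarithmic estimate depend only on the admissibility data of $\mathcal{X}_0$ (through $N_\epsilon$, $I$ and the order $\Theta_0$ of Definition \ref{D7}) and on the initial norms, so that the bootstrap threshold $C_0$ is genuinely independent of $t$.
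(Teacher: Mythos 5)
Your overall strategy coincides with the paper's: apply the logarithmic estimate of Theorem \ref{thm3} on the truncated complements of $\Sigma_t$, use Proposition \ref{prop6} to bound the striated quantities, exploit $\Log(h^{-A})=A(-\Log h)$ so that after division by $-\Log h$ the power of $h$ collapses to a term linear in $\int_0^t W$, feed in Proposition \ref{prop5} and Lemma \ref{lem5}, and close by a short-time Gronwall/continuity argument (your bootstrap threshold $\int_0^t W\le 1$ is in fact a cleaner formulation than the paper's implicit choice of $T$ in \eqref{eq43}).

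However, there is one genuine gap. You treat Proposition \ref{prop6} as if it directly controls the anisotropic norm $\|\omega(t)\|^{\epsilon}_{(\Sigma_t)_{\delta_t^{-1}(h)},(\mathcal{X}_t)_h}$ that must be inserted into the logarithm. It does not: by Definition \ref{D7} (identity \eqref{Eq32}), that norm carries the non-degeneracy quantity $I\big((\Sigma_t)_{\delta_t^{-1}(h)},(\mathcal{X}_t)_h\big)$ in its \emph{denominator}, while Proposition \ref{prop6} only bounds the numerator quantities $\check{\|}X_{t,\lambda,h}\|_{\epsilon}$ and $\|\partial_{X_{t,\lambda,h}}\omega\|_{\epsilon-1}$. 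The transported family could a priori degenerate ($I\to 0$) and destroy the estimate even when the numerators are under control. The paper devotes a substantial block of the proof precisely to this point: writing the ODE satisfied by $X_{0,\lambda,h}\circ\Psi$, running Gronwall backwards in time to get $|X_{0,\lambda,h}(x)|\le |X_{t,\lambda,h}(x)|\,\Exp\big(\int_0^t|\nabla v(\tau,\Psi(\tau,x))|d\tau\big)$, and combining this with the flow inclusions of Lemma \ref{Lem6} to obtain the lower bound
\begin{equation*}
I\big((\Sigma_t)_{\delta_t^{-1}(h)},(\mathcal{X}_t)_h\big)\,h^{-\int_0^t W(\tau)d\tau}\;\ge\; I\big((\Sigma_0)_h,(\mathcal{X}_0)_h\big),
\end{equation*}
i.e.\ the family degenerates at worst like $h^{\int_0^t W}$, which is then absorbed into the same $\Log(h^{-A})$ mechanism. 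This is not a matter of checking that "constants depend only on the admissibility data," as you suggest at the end; it is a time-dependent estimate requiring its own argument, and it is also where the flow-distorted truncation scale $\delta_t^{-1}(h)$ (hence the factor $\Exp\int_0^t\|v\|_{LL}$ built into $W$) genuinely enters. Your proof becomes complete once this propagation of non-degeneracy is supplied; the rest of your argument matches the paper's.
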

\begin{proof} For $\mathscr{X}_{0}=(X_{0,\lambda,h})_{\lambda\in\Lambda}$ an initial family of vector field. The dynamic of such family by the flow $\Psi$ through the time is a time-dependent family $\mathscr{X}_{t}=(X_{t,\lambda,h})_{\lambda\in\Lambda}$ defined by the well-known {\it Pushforward} process, that is
\begin{equation*}
X_{t,\lambda,h}(x)\triangleq X_{0,\lambda,h}\big(\Psi(t,\Psi^{-1}(t,x))\big).
\end{equation*}
To make the presentation more convenient, set $Z_{t,\lambda,h} \triangleq X_{0,\lambda,h}( \Psi(t,x))$. It is clear to verify that $Z_{t,\lambda,h}$ solves the following equation 
$$\partial_t Z_{t,\lambda,h}(x) = \nabla v(t,\Psi(t,x)) \cdot Z_{t,\lambda,h}(x).$$
On the other hande, for $t>0$ fixed and for all $\tau \in [0,T]$, put $ Y_{\tau,\lambda,h}(x) \triangleq Z_{t-\tau,\lambda,h}(x)$. We observe that $Y_{\tau,\lambda,h}(x)$ is a solution of the equation
$$\partial_\tau Y_{\tau,\lambda,h}(x) = - \nabla v(t-\tau,\Psi(t-\tau,x)) \cdot Y_{t-\tau,\lambda,h}(x),$$
so, Gronwall inequality provides
$$|Y_{t,\lambda,h}(x)| \leq |Y_{0,\lambda,h}(x)|\Exp\Big(\int_0^t|\nabla v(\tau,\Psi(\tau,x))|d\tau\Big).$$
Equivalently, we have
 $$|Y_{0,\lambda,h}(x)| \leq |Y_{t,\lambda,h}(x)|\Exp\Big(\int_0^t|\nabla v(\tau,\Psi(\tau,x))|d\tau\Big),$$
which implies
 $$|X_{0,\lambda,h}(x)| \leq |X_{t,\lambda,h}(x)|\Exp\Big(\int_0^t|\nabla v(\tau,\Psi(\tau,x))|d\tau\Big).$$
Hence, denoting $\delta_{t}^{-1}\triangleq h^{\Exp(\int_0^t \|v(\tau)\|_{LL}d\tau)}$ the inverse of $\delta_t$, it happens
\begin{eqnarray}\label{eq39}
\inf_{x\in(\Sigma_t)^{c}_{\delta_{t}^{-1}(h)}} \sup_{\lambda\in\Lambda} |X_{0,\lambda,h}\big(\Psi^{-1}(t,x)\big)| &\leq& \inf_{x\in(\Sigma)^c_{\delta_{t}^{-1}(h)}} \sup_{\lambda\in\Lambda}|X_{t,\lambda,h}(x)|  \\ \nonumber
&\times& \Exp\bigg(\int_0^t \|\nabla v \big(\tau,\Psi(\tau,\Psi^{-1}(\tau,\cdot)\big)\|_{L^\infty((\Sigma_t)^c_{\delta_{t}^{-1}(h)})}d\tau\bigg),
\end{eqnarray}
Since $\delta_t\big(\delta_{t}^{-1}(h)\big) = h$ and from Lemma \ref{Lem6} we can show that
\begin{equation}\label{(i)}
\Psi^{-1}\bigg(t,\big(\Sigma_t\big)^c_{\delta_{t}^{-1}(h)}\bigg) \subset \big(\Sigma_0\big)^c_{\delta_t(\delta_{t}^{-1}(h))} = \big(\Sigma_0\big)^c_h.
\end{equation}
\begin{equation}\label{(ii)}
\Psi\bigg(\tau,\Psi^{-1}\big(t,\big(\Sigma_t\big)^c_{\delta_t ^{-1}(h)}\big)\bigg) \subset (\Sigma_\tau\big)^c_{\delta_\tau(\delta_{t} ^{-1}(h))} \subset (\Sigma_\tau)^c_h.
\end{equation}
We treat the both sides of \eqref{eq39} separately. First, for the l.h.s, we make use to \eqref{(i)} and Definition \ref{D7}, it follows
\begin{eqnarray}\label{eq40}
\inf_{x\in(\Sigma_t)^{c}_{\delta_{t}^{-1}(h)}} \sup_{\lambda\in\Lambda} |X_{0,\lambda,h}\big(\Psi^{-1}(t,x)\big)| &=& \inf_{y\in\Psi^{-1}\big(t,(\Sigma_t)^{c}_{\delta_{t}^{-1}(h)}\big)} \sup_{\lambda\in\Lambda} |X_{0,\lambda,h}\big(y)\big)|\\ \nonumber
&\geq & \inf_{y\in(\Sigma_0)^{c}_h} \sup_{\lambda\in\Lambda} |X_{0,\lambda,h}\big(y)|\\ \nonumber
&\geq & I\big((\Sigma_0)_h , (\mathcal{X}_0)_h\big).
\end{eqnarray}
Second, for the r.h.s. also, in view of \eqref{(ii)} we write
\begin{eqnarray*}
\|\nabla v\big( \tau , \Psi(\tau,\Psi^{-1}(t,.))\big)\|_{L^{\infty}\big((\Sigma_t)^c_{\delta ^{-1}_t(h)}\big)}  &\leq &  \|\nabla v(\tau)\|_{L^{\infty}((\Sigma_\tau)^c_h)}\\
&\leq & -(\log h)\|\nabla v(\tau)\|_{L(\Sigma_\tau)}\\
&\leq & -(\log h)W(\tau).
\end{eqnarray*}
The preceding estimate combined with \eqref{eq39} and \eqref{eq40} we immediately obtain
\begin{equation}\label{eq41}
 I\big((\Sigma_t)_{\delta^{-1}_t(h)}, (\mathcal{X}(t))_h\big)h^{-\int_0^t W(\tau)d\tau} \geq I\big((\Sigma_0)_h, (\mathcal{X}(0))_h\big).
\end{equation}
At this stage, taking $\mathfrak{D}(t) \triangleq \|\partial_{X_{t,\lambda,h}}\omega\|_{\epsilon-1}+\|\omega(t)\|_{L^\infty}\check{\|}X_{t,\lambda}\|_{\epsilon}.$ Thus Proposition \ref{prop6} and Proposition \ref{prop5} lead to

\begin{eqnarray*}
\mathfrak{D}(t) & \lesssim & e^{Ct}\big(1+\|\omega_0\|_{L^\infty})(\check{\|}X_{0,\lambda,h}\|_\epsilon + \|\partial_{X_{0,\lambda,h}} \omega_0\|_{\epsilon-1} + \|\theta_0\|_{\epsilon} \|\partial_{X_{0,\lambda,h}}\theta_0\|_{\epsilon})\\
&\times &e^{Ct\|\nabla\theta_0\|_{L^\infty}r^{-C\int_0^t W(\tau)d\tau}} h^{-C\alpha_0\int_0^t W(\tau)d\tau}.
\end{eqnarray*}
We link the estimate \eqref{eq41} with Definition \ref{D7}, thus a straigtforward compution allows us to write
\begin{eqnarray*}
\mathfrak{D}(t) &\leq & e^{Ct}\bigg( N_{\epsilon}\big((\Sigma_0)_h , (\mathcal{X}_0)_h\big) + ( 1 + \|\omega_0\|_{L^\infty})\Big(\|\omega_0\|^{\epsilon}_{(\Sigma_0)_h , (\mathcal{X}_0)_h} + \|\theta_0\|_{\epsilon}\|\theta_0\|^{\epsilon+1}_{ (\Sigma_0)_h , (\mathcal{X}_0)_h }\Big)\bigg)\\
&\times & I{\big((\Sigma_0)_h , (\mathcal{X}_0)_h}\big) h^{-C\alpha_0\int_0^t W(\tau)d\tau}\times \Exp(Ct\|\nabla\theta_0\|_{L^\infty}r^{-C\int_0^t W(\tau)d\tau})\\
&\leq & Ce^{Ct} \sup_{0<h\leq e^{-1}} h^{-\beta_0}\bigg( N_{\epsilon}\big((\Sigma_0)_h , (\mathcal{X}_0)_h\big) + ( 1 + \|\omega_0\|_{L^\infty})\Big(\|\omega_0\|^{\epsilon}_{(\Sigma_0)_h , (\mathcal{X}_0)_h} + \|\theta_0\|_{\epsilon}\|\theta_0\|^{\epsilon+1}_{ (\Sigma_0)_h , (\mathcal{X}_0)_h }\Big)\bigg)\\
&\times & I{\big((\Sigma_t)_{\delta^{-1}_t(h)}} , (\mathcal{X}_0)_h\big) h^{\beta_0-C\alpha_0\int_0^t W(\tau)d\tau}\times e^{Ct\|\nabla\theta_0\|_{L^\infty}}r^{-C\int_0^t W(\tau)d\tau}).\\
\end{eqnarray*}
By exploring the previous estimate with identity (\ref{Eq32}) stated in Definition \ref{D7}, we deduce that
\begin{equation}\label{eq42}
\|\omega(t)\|^{\epsilon}_{(\Sigma(t))_{\delta ^{-1}_t(h)} , (\mathcal{X}_t)_h} \leq C_0 e^{Ct}h^{\beta_0 - C\int_0^t W(\tau)d\tau} e^{Ct\|\nabla\theta_0\|_{L^\infty}}r^{-C\int_0^t W(\tau)d\tau}.
\end{equation}
Now, we are willing to apply the logarithmic estimate in Theorem \ref{thm3}, Proposition \ref{prop5} and the monotonicity of the map
$ g\mapsto g\Log\big(e + \frac{a}{g}\big)$, we discover that
$$ \|\nabla v(t)\|_{L^\infty((\Sigma_t)^c_{\delta^{-1}_t(h)}} \leq C \bigg( \|\omega_0\|_{L^1 \cap L^\infty} +t\|\nabla\theta_0\|_{L^a \cap L^\infty}r^{-C\int_0^t W(\tau)d\tau} \bigg)\Log\Bigg(e+\frac{\|\omega(t)\|^{\epsilon}_{(\Sigma(t))_{\delta ^{-1}_t(h)} , (\mathcal{X}_t)_h}}{\|\omega\|_{L^\infty}}\Bigg). $$
From (\ref{eq42}), one obtains
\begin{eqnarray*}
\|\nabla v(t)\|_{L^\infty((\Sigma_t)^c_{\delta^{-1}_t(h)}} &\leq & C \bigg( \|\omega_0\|_{L^1 \cap L^\infty} +t\|\nabla\theta_0\|_{L^a \cap L^\infty}r^{-C\int_0^t W(\tau)d\tau} \bigg)\\
& \times & \bigg(M_0+t+t\|\nabla\theta_0\|_{L^a \cap L^\infty}r^{-C\int_0^t W(\tau)d\tau} + \big(\beta_0 - C\int_0^t W(\tau)d\tau\big)\Log h \bigg).
\end{eqnarray*}
As a consequence we have
\begin{eqnarray*}
\frac{\|\nabla v(t)\|_{L^\infty((\Sigma_t)^c_{\delta^{-1}_t(h)}}}{-\Log \delta^{-1}_t(h)} &\leq & C \bigg( \|\omega_0\|_{L^1 \cap L^\infty} +t\|\nabla\theta_0\|_{L^a \cap L^\infty}r^{-C\int_0^t W(\tau)d\tau} \bigg)\\
&\times & \bigg(M_0+t+t\|\nabla\theta_0\|_{L^a \cap L^\infty}r^{-C\int_0^t W(\tau)d\tau} + \int_0^t W(\tau)d\tau  \bigg)\Exp\Big(\int_0^t \|\nabla v(\tau)\|_{LL}d\tau  \Big).
\end{eqnarray*}
With the help of the definition $W(\tau)$ stated in the proposition \ref{prop4}, one has
\begin{eqnarray*}
W(t) &\leq & C \bigg( \|\omega_0\|_{L^a \cap L^\infty} +t\|\nabla\theta_0\|_{L^a \cap L^\infty}r^{-C\int_0^t W(\tau)d\tau} \bigg)\\
&\times & \bigg(M_0+t+t\|\nabla\theta_0\|_{L^a \cap L^\infty}r^{-C\int_0^t W(\tau)d\tau} + \int_0^t W(\tau)d\tau  \bigg)\Exp\big(2\int_0^t \|\nabla v(\tau)\|_{LL}d\tau  \big).
\end{eqnarray*}
We pick $T>0$ such that
\begin{equation}\label{eq43}
T\|\nabla\theta_0\|_{L^a \cap L^\infty}r^{-C\int_0^T W(\tau)d\tau} \leq \min(1,\|\omega_0\|_{L^1 \cap L^\infty}).
\end{equation}
Proposition \ref{prop5} combined with Lemma \ref{lem5} gives for all $t\in[0,T]$
\begin{eqnarray*}
 \| v(t) \|_{LL} &\leq & \|\omega(t)\|_{L^a \cap L^\infty} \\
&\leq & \|\omega_0\|_{L^a \cap L^\infty} + t \|\nabla\theta_0\|_{L^a \cap L^\infty}r^{-C\int_0^T W(\tau)d\tau} \\
&\leq & 2\|\omega_0\|_{L^a \cap L^\infty}.
\end{eqnarray*}
Accordingly, we have
$$W(t) \leq C\|\omega_0\|_{L^a \cap L^\infty}\big(M_0+t+\int_0^T W(\tau)d\tau\big)e^{Ct\|\omega_0\|_{L^a \cap L^\infty}}.$$
Gronwall's inequality ensures that for $t\in[0,T]$
\begin{equation}\label{W-b}
W(t) \leq C\|\omega_0\|_{L^a \cap L^\infty}(M_0+t)e^{Ct\|\omega_0\|_{L^a \cap L^\infty}}e^{e^{Ct\|\omega_0\|_{L^a \cap L^\infty}}}.
\end{equation}
Finally, we gain two principal estimate
\begin{equation*}
\int_0^T W(\tau)d\tau \leq (M_0+t)e^{e^{Ct\|\omega_0\|_{L^a \cap L^\infty}}},\quad  r^{-C\int_0^T W(\tau)d\tau} \leq r^{-(M_0+t)e^{e^{Ct\|\omega_0\|_{L^a \cap L^\infty}}}}.
\end{equation*}
In order to satisfy the assumption (\ref{eq43}) it is enough to take

$$ T\|\nabla\theta_0\|_{L^\infty}r^{-(M_0+t)e^{e^{Ct\|\omega_0\|_{L^a \cap L^\infty}}}} = \min\big(1,\|\omega_0\|_{L^1\cap L^\infty}).$$
So, the continuity process confirms the existence of such $T>0$. This achieves the proof.
\end{proof}
\subsection{Existence and uniqueness} We embark by mollifying the initial data, by setting $v_{0,n}=S_nv_0, \omega_{0,n}=S_n\omega$ and $G^{n}_i(\theta_0) = \rho_n \star G_i(\theta_0), i\in\lbrace 1,2 \rbrace$, with $S_n$ is a cut-off operator already defined in subsection \ref{Little-P} and $\rho_n(x)=n^2\rho(nx)$, with $\rho\in\mathscr{D}(\RR^2)$ be a positive function supported in unit ball and satisfying $\int_{\RR^2} \rho(x)dx =1.$ The quantity $G^n _i(\theta_0)$ being a constant in a small neighborhood of $\Sigma_0$. Indeed, we consider the set $$(\Sigma_0)_{r-\frac1n} \triangleq \lbrace x \in \RR^2, d(x,\Sigma_0) \leq r-1/n\rbrace.$$
On the other hand for $n$ big enough, we have $(\Sigma_0)_{\frac{r}{2}}\subset (\Sigma_0)_{r-\frac1n}$. Then for $x\notin (\Sigma_0)_{r-\frac1n}$, then $x\notin(\Sigma_0)_{\frac{r}{2}}$, which implies $G_i(\theta_0)=0$. Consequently $G^n _i(\theta_0)=\rho_n\star G_i(\theta_0)=0$ which gives the result.  

\hspace{0.5cm}For the uniformness bound for $\|\omega_{0,n}\|_{L^{a} \cap L^{\infty}},\; \|\nabla G^n_i(\theta_{0})\|_{L^{a} \cap L^{\infty}},\; \|\omega_{0,n}\|^{\epsilon}_{X_0}$, we explore the properties of mollifier argument and Young's inequality to obtain

$$\|\rho_n \star G_i(\theta_0) \|_{L^{a}} \leq \|\theta_0 \|_{L^{a}}, \qquad \|\rho_n \star    \nabla G_i(\theta_0) \|_{L^{a}\cap L^{\infty}}\leq \|\nabla\theta_0 \|_{L^{a}\cap L^{\infty}}.$$
and
$$\|\omega_{0,n}\|_{L^{a} \cap L^{\infty}} \leq \|\omega_{0}\|_{L^{a} \cap L^{\infty}},\quad \|\omega_{0,n}\|^{\epsilon}_{X_0} \leq \|\omega_{0}\|^{\epsilon}_{X_0}.
$$
For the term $\partial_{X_{0,\lambda}} G^n_i(\theta_{0})$, we write

$$\|\partial_{X_{0,\lambda}} G^n_i(\theta_{0})\|_{\epsilon} = \|\rho_n\star(\partial_{X_{0,\lambda}}G_i (\theta_{0}))\|_{\EE}  +  \big[\partial_{X_{0,\lambda}},\rho_n\star]G_i (\theta_{0})\|_{\epsilon}.$$

We make use the fact $X_{0,\lambda}\in C^\EE$ and $G_i(\theta_0)\in W^{1,\infty}(\RR^2)$ we exploit the following result \cite{Hassainia-Hmidi}
$$
\|[\partial_{X_{0,\lambda}},\rho_n\star]G_i (\theta_{0})\|_{\epsilon}\lesssim\|X\|_{\EE}\|\nabla(G_i(\theta_0))\|_{L^\infty}.
$$
to conclude
$$\|\partial_{X_{0,\lambda}} G^n_i(\theta_{0})\|_{\epsilon} \leq \||\partial_{X_{0,\lambda}}\theta_{0}\|_{\epsilon} + \check{\|}X_{0,\lambda}\|_{\epsilon}\|\nabla \theta_0\|_{L^{\infty}}.$$
We follow closely the same steps presented in the existence results in the case of the regular patch, we achieve the result.

\hspace{0.5cm}We will now prove uniqueness of solutions in the space $\mathcal{M}=L^{\infty}([0,T],L^2)$. In this part the uniqueness issue doesn't similar o the formalism of regular patches because the velocity is not Lipschitzian everywhere it belongs to $LL-$space. For this purpose, we follow Yudovich's approach. To do this, let $(v_0,\theta_0)$ be a smooth solution belonging to $(b+L^2)\times L^2,$ where $b$ is a stationary vector field in the sense
$$b(x)=\frac{x^{\bot}}{|x|^2} \int_0^{|x|} sf(s)ds, $$
where $f \in \mathcal{D}(\RR^2)$ supported away from the origin. Then any local solution $\big(v(t),\theta(t)\big)$ of the system \eqref{Eq1} belongs to $(b+L^2)\times L^2$. 

\hspace{0.5cm}Without loss of generality setting  $ b=0$ and let $(v_i,\nabla p_i,\theta_i) \in \mathcal{M}, 1\leq i \leq 2$ be two solutions of the system \eqref{Eq1} and denote $\delta v = v_1 - v_2,\;\delta p = p_1 - p_2$ and $\delta G_{i}(\theta) = G_i(\theta_1) - G_i(\theta_2)$ then a straightforward computations claim that $(\delta v,\delta p,\delta\theta)$ evolves
\begin{equation}\label{Diff} 
\left\{ \begin{array}{ll}
\partial_{t} \delta v+v_2\cdot\nabla \delta v = G(\theta_1)-G(\theta_2)-\nabla \delta p - \delta v\cdot\nabla v_1,& \\
 \partial_{t} (\delta G_i(\theta))+v_2\cdot\nabla\delta (G_i(\theta))= -\delta v\cdot\nabla(G_i(\theta)), & \\
\Div v=0, &\\
({v},{\theta})_{| t=0}=({v}_0,{\theta}_0).
\end{array} \right.
\end{equation}
A standard $L^2$ estimate for \eqref{Diff} combined with H\"older inequatlity gives for $q\in [a,\infty[ $ with the notation $q'=\frac{q}{q-1}$ that
\begin{equation}\label{Diff-v-theta}
\left\{\begin{array}{ll}
\frac{d}{dt} \|\delta v(t)\|^{2}_{L^2} \leq 2\|\nabla v_1(t)\|_{L^q} \|\delta v(t)\|^{2}_{L^{2q'}} + 2\|G(\theta_1)-G(\theta_2)\|_{L^2}\|\delta v(t)\|_{L^2}, &\\
\frac{d}{dt}\|\delta G_i(\theta)(t)\|^{2}_{L^2} \leq 2\|\nabla (G_i(\theta_1)(t))\|_{L^\infty}\|\delta v(t)\|_{L^2}\|\delta G_i(\theta)(t)\|_{L^2}.
\end{array}
\right.
\end{equation}
By interpolation, \eqref{Diff-v-theta} takes the form
\begin{equation}\label{Diff-v-theta-1}
\left\{\begin{array}{ll}
\frac{d}{dt} \|\delta v(t)\|^{2}_{L^2} \lesssim  q\|\nabla v_1\|_{L}\|\delta v(t)\|^{\frac{2}{q}}_{L^{\infty}}\|\delta v(t)\|^{\frac{2}{q'}}_{L^{2}} +  (\|\delta G_1(\theta)\|_{L^{2}} + \|\delta G_2(\theta)\|_{L^{2}} ) \|\delta v(t)\|_{L^2},&\\
\frac{d}{dt}\|\delta G_i(\theta(t))\|^{2}_{L^2} \leq 2\|\nabla G_i(\theta_1(t))\|_{L^\infty}\|\delta v(t)\|_{L^2}\|\delta G_i(\theta(t))\|_{L^2},\quad i=1,2.
\end{array}
\right.
\end{equation}

with 
$$
\|\nabla v_1\|_{L}\triangleq\sup_{q\in[2,\infty[}\frac{\|\nabla v_1\|_{L^q}}{q}.
$$
Since $\omega_0\in L^a\cap L^\infty$ then \eqref{W-b} implies that the function $W$ is locally bounded. With the aid of \eqref{prop5:2}, one deduce that $\|\nabla v_1\|_{L}$ is also locally bounded. On the other hand, for $i\in\{1,2\}$ as aforementioned above $v_i\in L^{\infty}_t L^2$ and $\omega_i\in L^\infty_t L^\infty$  provide that $\delta v\in L^\infty_t L^\infty$. Meaning that the r.h.s. of $\delta v$ estimate in \eqref{Diff-v-theta-1} is well-defined.

\hspace{0.5cm} Next, for $n \in \NN_{>0}$, take
$$ \mathfrak{E}_n(t) \triangleq \sqrt{\sum_{i=1}^2\|\delta G_i(\theta) (t)\|^{2}_{L^2}+\|\delta v(t)\|^{2}_{L^2}+\frac{1}{n}}.$$
By a straighforward calculations, we get 
$$\frac{d}{dt}\mathfrak{E}_n(t) \leq Cq \|\nabla v_1\|_{L}\|\delta v(t)\|^{\frac{2}{q}}_{L^{\infty}} {\mathfrak{E}_n(t)}^{1-\frac{2}{q}} +\lambda(t)\mathfrak{E}_n(t).$$
with $\lambda(t)=\big(1+\sum_{i=1}^2\|\nabla \big(G_i(\theta_1)\big)(t)\|_{L^\infty}\big)$. By setting $\mathfrak{F}_n(t) = e^{-\int_0^t\lambda (\tau)d\tau}\mathfrak{E}_n(t)$. Also, we have
\begin{equation}\label{F-n}
\frac{d}{dt}\mathfrak{F}_n(t) {\mathfrak{F}_n(t)}^{\frac{2}{q}-1}\leq Cq \|\nabla v_1\|_{L}\|\delta v(t)\|^{\frac{2}{q}}_{L^{\infty}}e^{-\frac2q\int_0^t\lambda (\tau)d\tau}.
\end{equation}
Let us denote that function $\gamma(t)=-\int_{0}^{t}\lambda(\tau)d\tau$ represents the loss of regularity in the process of $LL$ class. Consequently, by developing a time integration one deduce

$$
{\mathfrak{F}_n(t)} \leq \bigg(\Big(\frac{1}{n}\Big)^{\frac{2}{q}}+ C\int_0^t \|\nabla v_1(\tau)\|_{L}\|\delta v(\tau)\|^{\frac{2}{q}}_{L^{\infty}}d\tau\bigg)^{\frac{q}{2}}.$$

Letting now $n$ goes to infinity to obtain for $t>0$
\begin{equation}\label{delta}
\|\delta v(t)\|^{2}_{L^2}+\sum_{i=1}^{2}\|\delta G_i(\theta) (t)\|^{2}_{L^2} \leq \|\delta v(t)\|^{2}_{L^\infty_t L^\infty}\bigg( C\int_0^t \|\nabla v_1(\tau)\|_{L}d\tau\bigg)^q.
\end{equation}
 
The finitude of the quantity $\|\nabla v_1\|_{L}$ ensures the extistence of $t^{\star}$ fullfils
$$
C\int_0^{t^\star} \|\nabla v_1(\tau)\|_{L}d\tau <1.
$$
Letting $q$ goes to infinity in \eqref{delta}, we find that $\delta v=\delta G_i(\theta)=0$ on $[0,t^\star]$. In accordance to the connectivity argument, one may conclude that $\delta v=\delta\theta=0$ on $[0,t]$ for all $0 \leq t \leq T$ which implies that $\delta \theta=0$. Indeed, as $\theta_i, 1\le i\le2$ is transported by the flow then $\theta_i(t,x)=\theta_0(\Psi_{v_i}^{-1}(t,x))$, or $v_1=v_2$ meaning that $\Psi_{v_1}^{-1}\equiv\Psi_{v_2}^{-1}$. Finally, we infer that $\theta_1=\theta_2$ and this ends the proof of uniquness part.

\subsection{Proof of Theorem \ref{thm2}}  By hypothesis $D_0$ is an open bounded domain with a boundary is a Jordan curve of H\"older regularity $C^{\epsilon+1}$ outside $\Sigma_0$. The geometric boundary of $D_0$ provides in view of Definition \ref{Chart} a real function $f_0\in C^{1+\epsilon}$ such that $\partial D_0=f^{-1}_0(\lbrace0\rbrace)\cap V_0$ and $\nabla f_0 \neq 0$ on $V_0\setminus\Sigma_0.$ We assume there exists a real number $\text{ }\gamma^{'}_0>0,\text{ }$ such that
\begin{equation}\label{Cond-bound}
|\nabla f_0(x)| \geq d(x,\Sigma_0)^{\gamma'_0}, \quad  \forall x\in V_0.
\end{equation}
Such condition is imposed to telle you that the curves constitute the boundary of $D_0$ are not tangent to one another at infinite order at the singular points. On the other hand, let $(\vartheta_h)_{h\in ]0,e^{-1}]}$ be an indexed family such that $\vartheta\in\mathcal{D}(\RR^2),$ 
$$ \supp \vartheta_h \subset (\Sigma_0)^{c}_{\frac{h}{2}},\quad  \vartheta(x) = 1,\quad \forall x \in (\Sigma_0)^{c}_{h},$$
and satisfying
$$ \|\vartheta_h\|_r \leq C(r)\frac{1}{h^r}, \quad \forall (r,h)\in \RR_+\times]0,e^{-1}].$$
On the other hand, for $\varphi$ be a smooth function be such that $ \supp \varphi \subset V_0,  \varphi(x)=1$ for every $x\in V_1,$
with $V_1$ is a small nighbrohood of $V_0.$

\hspace{0.5cm}Let us introduce the family $(\mathcal{X}_{0}=X_{0,\lambda,h})_{(\lambda,h)\in\lbrace0,1\rbrace\times]0,e^{-1}]}$, with
$$ X_{0,0,h} = \nabla^{\bot}(\vartheta_h f_0), \quad X_{0,1,h} = (1-\varphi)\left( \begin{array}{ll}
0 \\
1
\end{array} \right).$$
We see if $\mathcal{X}_{0}$ is $\Sigma_0$-admissible of order $\Theta_0=(\alpha_0,\beta_0,\gamma_0)$. Clearly, $X_{0,0,h}\in C^\EE$ because $f\in C^{1+\EE}$ and $\Div X_{0,0,h}=0$, while $X_{0,1,h}\in C^{\infty}$. Also, by construction for some $\alpha_0>1,\;\supp X_{0,i,h} \subset (\Sigma_0)^{c}_{h/2} \subset (\Sigma_0)^{c}_{h^{\alpha_{0}}}$ and in light of \eqref{Cond-bound} we may choose $\gamma_0=-\gamma'_0$ to conclude that
$$\check{\|}X_{0,i,h}\|_\epsilon \lesssim \frac{1}{h^{1+\epsilon}}.$$
Therefore, it is enough to take $\beta_0=\gamma_0-\epsilon-1$ to obtain the order $\Theta_0.$ On theother hand, we write

$$\partial_{X_{0,0,h}} \omega_0 = \vartheta_h \partial_{\nabla^\bot f_0} \omega_0 + f_0\partial_{\nabla^\bot\vartheta_h }\omega_0.      $$
It is easy to verify that $\partial_{\nabla^\bot\vartheta_h } \omega_0\in \mathscr{D}'(\RR^2)$ of order $0$ and $\supp (\partial_{\nabla^\bot\vartheta_h } \omega_0) \subset \partial D_0$, so that $f_0\equiv0$ over $\partial D_0$ leading to $\partial_{X_{0,0,h}} \omega_0 = 0.$ Whereas, $\partial_{X_{0,1,h}}\omega_0=0$ follows from the fact $1-\varphi$ vanishes on $W_0\subset\partial D_0$. Finally, we claim the regularity of the an initial density $\theta_0$. Doing so, we make use to the fact $G_i(\theta_0),\; 1\le i\le 2$ is constant in neighborhood of $\Sigma_0$, so that $\nabla G_i(\theta_0)=0$ in the same neighborhood. It follows that $\nabla^\perp\vartheta_h\cdot\nabla G_i(\theta_0)=G'_{i}(\theta_0)\nabla\theta_0\cdot\nabla^\perp\vartheta_h=0$. Besides, $\partial_{\nabla^\perp}\theta_0$ belongs to $C^\EE$. Indeed, the assumptions $\theta_0\in \Lip$ and $f_0\in C^{1+\EE}$ yield in first time that $\nabla^\bot f_0\in C^\epsilon$ and
\begin{eqnarray*}
\big\|\partial_{\nabla^\perp f_0}\theta_0\big\|_{C^\EE}&=&\big\|\nabla^\perp f_0\cdot\nabla\theta_0\big\|_{C^\EE}\\
&\le&\|\nabla^\perp f_0\|_{C^\EE}\|\theta_0\|_{\Lip}<\infty.
\end{eqnarray*}
This gives $\partial_{\nabla^\perp f_0}\theta_0\in C^\EE$. For the term $\partial_{X_{0,0,h}}\theta_0$, we make use $\vartheta_h\in\mathscr{D}(\RR^2)$ and $\partial_{\nabla^\perp}\theta_0\in C^{\EE}$, we immediately deduce that $\partial_{X_{0,0,h}}\theta_0\in C^{\EE}$, so the assumptions of Theorem \ref{Thm2-gen} are fulfilled in the sense that the local well-posedness of Theorem \ref{thm2} is now achieved. To guaranty the regularity of the boundary $\partial D_t$ outside of $\Sigma_t$ we explore the same scenario as in the regular patch case.


\begin{thebibliography}{9999}
\bibitem{Abidi-Hmidi-Keraani} H. Abidi, T. Hmidi and S. Keraani: \emph{On the global well-posedness for the axisymmetric Euler equations.} Math. Ann.  347,  15--41 (2010).
\bibitem{Achim-Wirth} A. Wirth: \emph{A Guided tour through buoyancy driven flows and mixing}. France, pp.68. ⟨cel-01134112v4⟩ (2015).


\bibitem{Serge} S. Alinhac and P. G\'erard.  \emph{Op\'erateurs pseudo-diff\'erentiels et th\'eorème de Nash-Moser.} Savoirs Actuels. InterEditions, Paris (1991).

\bibitem{Angennet-Haker-Tannenbaum} Angenent, S. Haker and S. Tannenbaum :  \emph{Minimizing ﬂows for the Monge-Kantorovich problem.}  SIAM J. Math. Anal. 35, 61--97 (2003).

\bibitem{Bahouri-Chemin-Danchin} H. Bahouri, J.-Y. Chemin and R. Danchin:   \emph{Fourier analysis and nonlinear partial differential equations}. Springer-Verlag Berlin Heidelberg (2011).

\bibitem{Beale-Kato-Majda} J. T. Beale, T. Kato and A. Majda: \emph{ Remarks on the breakdown of smooth solutions for the $3D-$Euler equations}. Commun. Math. Phys. 94, 61--66 (1984).

\bibitem{Bertozi} A.L. Bertozzi and P. Constantin: \emph{Global Regularity for Vortex Patches}. Commun. Math. Phys. 152, 19--28 (1993).

\bibitem{Blanchard} D. Blanchard, N. Bruy\`ere and O. Guib\'e: \emph{Existence and uniqueness of the solution of a Boussinesq system with non linear dissipation.}  Communications on pure and applied analysis, 12(5), 1--15 (2013).

\bibitem{Bony} J.-M. Bony:  \emph{Calcul symbolique et propagation des singularit\'es pour les \'equations aux d\'eriv\'ees partielles non lin\'eaires.} Ann. de l'Ecole Norm. Sup. (14), 209--246 (1981).
\bibitem{Brenier}Yann Brenier: {\em Optimal Transport, Convection, Magnetic Relaxation and Generalized Boussinesq Equations}. J. Nonlinear Sci. 19, 547--570 (2009).


\bibitem{Chae} D. Chae: {\em Local existence and blow-up criterion for the Euler equations in the Besov spaces}. Asymptot. Anal. 38, no. 3-4, 339--358 (2004).
\bibitem{Chae-Kim-Nam-1} D. Chae, S.-K. Kim and H.-S. Nam: \emph{Local existence and blow-up criterion of H\"older continuous solutions of the Boussinesq equations}. Nagoya Math. J. 155, 55-80 (1999).
\bibitem{Chae-Kim-Nam-2} D. Chae and H.-S. Nam: \emph{Local existence and blow-up criterion for the Boussinesq equations}. Proc. Roy. Soc. Edinburgh Sect. A 127, 935--946 (1997).

\bibitem{Chemin1} J.-Y. Chemin: {\em A remark on the inviscid limit for two-dimensional incompressible fluis}. Communications in partial differential equations 21, no 11-12, 1771--1779 (1996).
\bibitem{Chemin2}  J.-Y. Chemin: \emph{Perfect incompressible Fluids.} Oxford University Press (1998).

\bibitem{Chen} C. Chen and J. Liu: {\em Global well-posedness of $2D$ nonlinear Boussinesq equations with mixed partial viscosity and thermal diffusivity.} Math. Meth. Appl. Sci. 40, 4412--4424 (2017)

\bibitem{Danchin1} R. Danchin: {\em \'Evolution temporelle d'une poche de tourbillon singuli\`ere}. Comm. in Partial Differential Equations, 22, 685--721 (1997).

\bibitem{Danchin3} R. Danchin: {\em Poches de tourbillon visqueuses.} Math. Pures Appl. (9) 76, 609--647 (1997).
 
\bibitem{Danchin-Paicu} R. Danchin and M. Paicu: {\em Global well-posedness issues for the inviscid Boussinesq system with Yudovich’s type data}. Comm. Math. Phys. 290, 1--14 (2009).

\bibitem{Danchin-Zhang} R. Danchin and X. Zhang: {\em Global persistence of geometrical structures for the Boussinesq equation with no diffusion.} Communication in Partial Differential Equations, 42 (1), 68--99 (2017).


\bibitem{Danchin-Fanelli} R. Danchin and F. Fanelli: {\em The well-posedness issue for the density dependent Euler equations in endpoint Besov spaces}. Journal de Mathématiques
Pures et Appliquées 96(3), 253-278 (2011).

\bibitem{Depauw} N. Depauw: {\em Poche de tourbillon pour Euler $2D$ dans un ouvert \`a bord}. J. Math. Pures Appl. (9) 78, no. 3, 313--351 (1999).
\bibitem{Diaz} J. I. D\'iaz and G. Galiano: {\em Existence and uniqeness of solutions of the Boussinesq system with nonlinear thermal diffusion}. Comm. Partial Differential Equations 28, no. 7-8, 1237--1263 (2003).

\bibitem{Dutrifoy} A. Dutrifoy: {\em $3D$ vortex patches in bounded domains}. Comm. Partial Differential Equations 28, no. 7-8, 1237--1263 (2003).

\bibitem{Fanelli} F. Fanelli: {\em Conservation of geometric structures for non-homogeneous inviscid incompressible fluids}. Comm. Partial Differential Equations 37(9), 1553--1595 (2012).

\bibitem{Hassainia-Hmidi} Z. Hassainia and T. Hmidi: \emph{On the inviscid Boussinesq system with rough initial data}. J. Math. Anal. Appl. 430, no. 2, 777--809 (2015).
  
\bibitem{Hmidi-1} T. Hmidi: {\em R\'egularit\'e h\"oldérienne des poches de tourbillon visqueuses}. J. Math. Pures Appl. (9) 84, no. 11, 1455--1495 (2005).

\bibitem{Hmidi-2} T. Hmidi: {\em Poches de tourbillon singuli\`eres dans un fluide faiblement visqueux}. Rev. Mat. Iberoam. 22, no. 2, 489--543 (2006).

\bibitem{Hmidi-Keraani} T. Hmidi, S. Keraani: {\em Existence globale pour le système d'Euler incompressible $2-D$ dans $B^1_{\infty,1}$}. Compte-rendu de l'Académie des Sciences, Paris S\'erie I 341 (11), 655--658 (2005).

\bibitem{Hmidi-Zerguine} T. Hmidi and M. Zerguine: \emph{Vortex patch for stratified Euler equations}. Commun. Math. Sci.Vol. 12, no. 8, 1541--1563 (2014).
\bibitem{Kato-Ponce} T. Kato, G. Ponce: \emph{Well-posedness of the Euler and Navier-Stokes equations in the Lebesgue spaces $L^p_s(\RR^2)$}. Rev. Mat. Iberoamericana 2, no. 1-2, 73--88 (1986).

\bibitem{Liu-Wang-Zhang} X. Liu, M. Wang and Z. Zhang: {\em Local well-posedness and blow-up criterion of the Boussinesq equations in critical Besov spaces}. Journal of Mathematical Fluid Mechanics, 12, 280--292 (2010).

\bibitem{Meddour} H. Meddour: {\em Local stability of geometric structures for Boussinesq system with zero viscosity}. Matematički Vesnik 71, 4, 285--303 (2019).

\bibitem{Meddour-Zerguine} H. Meddour and M. Zerguine: {\em Optimal rate of convergence in stratified Boussinesq system}. Dynamics of PDE, Vol. 15, no. 4, 235--263 (2018).
\bibitem{Milhaljan}  J. Mihaljan: {\em A Rigorous Exposition of the Boussinesq Approximations Applicable to a Thin Layer of Fluid}. Astrophysical Journal, vol. 136, 1126--1133 (1962).

\bibitem{Paicu-Zhu-1} M. Paicu and N. Zhu: {\em On the striated regularity for the $2D$ anisotropic Boussinesq system}. Journal of Nonlinear Science (2020).

\bibitem{Paicu-Zhu-2} M. Paicu and N. Zhu: {\em On the Yudovich's type solutions for the $2D$ Boussinesq system with thermal diffusivity}. Discrete and Continuous Dynamical Systems. In press, hal-02500672.

\bibitem{Pak-Park} H. C. Pak and Y. J. Park: {\em Existence of solution for the Euler equations in a critical Besov space $B^{1}_{\infty,1}(\RR^N)$}. Comm. Partial Differential Equations 29, no. 7-8, 1149-1166 (2004).
\bibitem{Serfati} P. Serfati: {\em Une preuve directe d'existence globale des vortex patches 2D}. C. R. Acad. Sci. Paris S\'er. I Math, 318, No. 6, 515-518 (1994).
\bibitem{Sulaiman} S. Sulaiman: {\em Global existence and uniquness for a non linear Boussinesq system}. Journal of Mathematical Physics 51, 093103 (2010).
\bibitem{Yudovich} V. I. Yudovich: {\em Non-stationnary flows of an ideal incompressible fluid}. Zhurnal Vych Matematika, 3, 1032--106 (1963).
\bibitem{Vishik} M. Vishik: {\em Hydrodynamics in Besov spaces}. Archive for Rational Mechanics and Analysis, 145,  197--214 (1998).
\bibitem{Wu} G. Wu, X. Zheng: \emph{Global well-posedness for the two-dimensional nonlinear Boussinesq equations with vertical dissipation.} J. Differential Equations 255, 2891–2926 (2013).
\bibitem{Weinan-Shu} Weinan. E and C. Shu: \emph{Small-scale structures in Boussinesq convection}. Phys. Fluids 6, 49--58 (1994).
\bibitem{Zerguine} M. Zerguine: {\em The regular vortex patch for stratified Euler equations with critical fractional dissipation}. J. Evol. Equ. 15, 667-698 (2015).
\end{thebibliography}
\end{document}